\documentclass[leqno,11pt]{amsart}
\usepackage [cp1250]{inputenc}
\usepackage{amsmath}
\usepackage{amsfonts}

\usepackage{amsthm}
\usepackage{amssymb}
\setcounter{tocdepth}{1}
\usepackage{times}
\usepackage{geometry}
\usepackage{xfrac}
\usepackage{amssymb}
\usepackage{verbatim}
\usepackage{dsfont}

\usepackage[charter]{mathdesign}

\DeclareFontFamily{U}{dutchcal}{\skewchar\font=45 }
\DeclareFontShape{U}{dutchcal}{m}{n}{<-> s*[1.0] dutchcal-r}{}
\DeclareFontShape{U}{dutchcal}{b}{n}{<-> s*[1.0] dutchcal-b}{}
\DeclareMathAlphabet{\mathlcal}{U}{dutchcal}{m}{n}
\SetMathAlphabet{\mathlcal}{bold}{U}{dutchcal}{b}{n}

\usepackage{enumerate}
\usepackage{verbatim}
\usepackage{amsfonts}
\usepackage{graphicx}
\usepackage{tikz}
\usetikzlibrary{matrix}
\def\ocirc#1{\ifmmode\setbox0=\hbox{$#1$}\dimen0=\ht0
    \advance\dimen0 by1pt\rlap{\hbox to\wd0{\hss\raise\dimen0
    \hbox{\hskip.2em$\scriptscriptstyle\circ$}\hss}}#1\else
    {\accent"17 #1}\fi}

\newcommand{\mres}{\mathbin{\vrule height 1.6ex depth 0pt width
0.13ex\vrule height 0.13ex depth 0pt width 1.3ex}}
\newcommand{\R}{\mathbb{{R}}}

\newcommand{\rn}{{\mathbb{R}^n}}
\newcommand{\mup}{{\mu_{\vp(\cdot)}}}
\newcommand{\vr}{{\varrho}}
\newcommand{\ep}{{\epsilon}}
\newcommand{\ve}{{\varepsilon}}
\newcommand{\dv}{{\rm div\,}}
\newcommand{\xinm}{{\xi_n^{\rm meas}}}
\newcommand{\gam}{{\gamma^{\rm meas}}}
\newcommand{\etam}{{\eta^{\rm meas}}}
\newcommand{\Mb}{{\mathcal{M}_b}}
\newcommand{\muvp}{{\mu_{\vp(\cdot)}}}
\newcommand{\musp}{{\mu^+_{\rm sing}}}
\newcommand{\musm}{{\mu^-_{\rm sing}}}
\newcommand{\mus}{{\mu_{\rm sing}}}

\newcommand{\MP}{{\mathcal{M}^{\vp(\cdot)}_b}}
\newcommand{\capP}{{\mathrm{C}_{\vp(\cdot)}}}
\newcommand{\CapP}{{\mathrm{cap}_{\vp(\cdot)}}}

\newcommand{\e}{\varepsilon}
\newcommand{\vp}{\varphi}
\newcommand{\A}{\mathcal{A}}
\newcommand{\oppA}{{\mathfrak{A}_{\vp(\cdot)}}}

\usepackage{color}

\newcommand{\N}{\mathbb{N}}
\newcommand{\wt}{\widetilde}
\newcommand{\el}{{\mathlcal{l}}}

\theoremstyle{definition}

\newcommand{\snr}[1]{\lvert #1\rvert}
\newcommand{\nr}[1]{\lVert #1 \rVert}
\def\rp0{{[0,\infty)}}

\newtheorem{theorem}{Theorem}
\newtheorem{lemma}{Lemma}
\newtheorem{proposition}[lemma]{Proposition}
\newtheorem{corollary}[lemma]{Corollary}
\newtheorem{definition}[lemma]{Definition}
\newtheorem{remark}[lemma]{Remark}
\newtheorem{rem}[lemma]{\bf Remark}

\swapnumbers

\title{Measure data elliptic problems\\ with generalized Orlicz growth}
\author{Iwona Chlebicka}\address{Iwona Chlebicka\\Faculty of Mathematics, Informatics and Mechanics, University of Warsaw\\ul. Banacha 2, 02-097 Warsaw, Poland} \email{\texttt{i.chlebicka@mimuw.edu.pl}}
\begin{document}

\subjclass[2010]{35J60 (46E30)\vspace{1mm}} 

\keywords{ Capacity, Elliptic PDEs,  Measure data problems, Musielak--Orlicz spaces, Orlicz--Sobolev spaces, Very weak solutions\vspace{1mm}}

\thanks{{\it Acknowledgements.}\ I. Chlebicka is supported by NCN grant no. 2016/23/D/ST1/01072. 
\vspace{1mm}}

 \pagestyle{empty}
 \begin{abstract} We study nonlinear measure data elliptic problems involving the operator exposing generalized Orlicz growth. Our framework embraces reflexive Orlicz spaces, as well as natural variants of variable exponent and double-phase spaces. Approximable and renormalized solutions are proven to exist and coincide for arbitrary measure datum and to be unique when the datum is diffuse with respect to a relevant nonstandard capacity. For justifying that the class of measures is natural, a capacitary characterization of diffuse measures is provided. 
 \end{abstract}

\maketitle

\setcounter{tocdepth}{1}

\section{Introduction}

 Our objective is to study existence and uniqueness of two kinds of very weak solutions to nonlinear measure data problem
\begin{equation}
\label{eq:main}
\begin{cases}-\dv \A(x,\nabla u) =\mu & \text{in }\ \Omega,\\
u=0 &  \text{on }\ \partial\Omega,\end{cases}
\end{equation}
where  $\Omega\subset\rn$ is a bounded, $n\geq 2$,  $\mu$ is an arbitrary bounded measure on $\Omega$, and $\A:\Omega\times\rn\to\rn$ has growth prescribed be the means of an inhomogeneous function $\vp:\Omega\times\rp0\to\rp0$ of an Orlicz growth with respect to the second variable. Special cases of the leading part of the operator $\A$ include $p$-Laplacian, $p(x)$-Laplacian, but we cover operators with Orlicz, double-phase growth, as well as weighted Orlicz or variable exponent double phase one as long as it falls into the realm of Musielak-Orlicz spaces within the natural regime described in Section~\ref{sec:prelim}.  The existence of renormalized solutions to general measure data problem and uniqueness for diffuse measures is new even in the reflexive Orlicz case. It was also not known in two cases enjoying lately particular attention -- double-phase and variable exponent double phase ones.

Very weak solutions to measure-data problems of the  form~\eqref{eq:main} are already studied in depth in the classical setting of Sobolev spaces, that is when the growth growth of the leading part of the operator is governed by a power function with the celebrated special case of $p$-Laplacian $\Delta_p u=\dv(|\nabla u|^{p-2}\nabla u)$. To give a flavour let us mention e.g.~\cite{BBGGPV,BGO,BM,DaMMa,DMMOP1}, where the existence is provided for various notions of very weak solutions for $L^1$ or measure data. Note that the notions in many cases coincide~\cite{kikutu,DaMMa,DMMOP1}. In general it is possible to find a~proper notion enjoying existence, but sharp assumptions on $\mu$ to ensure uniqueness for these type of problems are not known even when the operator $\A$ exposes the mentioned standard $p$-growth. See counterexamples in~\cite{BGO} on non-uniqueness for concentrated measures.  The natural sufficient condition in the standard case is that $\mu$ is so diffuse that it does not charge the sets of proper capacity zero and the proof of uniqueness essentially employs its characterisation of the form of Theorem~\ref{theo:decomp}.

Analysis of problems exposing $(p,q)$-growth, where the operator is trapped between polynomials $|\xi|^p\lesssim \A(x,\xi)\cdot\xi\lesssim 1+|\xi|^q,$ are already classical topic investigated since~\cite{Gossez-pde,m1,Talenti}. Nowadays, there is a great interest in analysis under nonstandard growth conditions that embraces more: problems with variable exponent growth used in~modelling of electrorheological fluids~\cite{mingione02,el-rh2}, thermistor model~\cite{zhikov9798} or image processing~\cite{chen06}, with double-phase growth good for description of composite materials~\cite{comi}, as well as Orlicz one -- engaged in modelling of non-Newtonian fluids~\cite{gwiazda-non-newt} and elasticity~\cite{ball}. Studies on nonstandard growth problems form a solid  stream in the modern nonlinear analysis~\cite{bacomi-st,comi,demi,deoh,IC-pocket,ChDF,CZG-gOp,hahale,hahato,Marc2020}. The theory of existence of very weak solutions to problems with nonstandard growth and merely integrable data is under intensive investigation~\cite{ab,BW,CiMa,gw-e,pgisazg1,San-Ur,Zhang}. For the study on Musielak-Orlicz-growth $L^1$-data elliptic equations we refer to~\cite{gw-e} under growth restrictions on the conjugate of the modular function and to~\cite{pgisazg1}, where existence is provided either in (all) reflexive spaces or when the growth of modular function is well-balanced (and the smooth functions are modularly dense, cf. also~\cite{yags}). Analogous parabolic study can be found in~\cite{CGWKSG,icpgazg,t2}. For measure data problems with Orlicz growth to our best knowledge we can refer only to~\cite{ABEY} for some class of measures, \cite{BeBe,CiMa} for general measures in the~reflexive case extended in~\cite{ACCZG,CGZG}. In~\cite{ACCZG,CGZG,CiMa} besides existence also regularity in the scale of Marcinkiewicz-type spaces is provided even for solutions to measure data problems, but therein the uniqueness is obtained only if the datum is integrable. On the other hand, existence of very weak solutions and uniqueness in the case of diffuse measures is studied in the variable exponent setting in~\cite{Zhang,LvLiZou}. Here two kinds of very weak solutions are proven to exist and coincide for arbitrary measure datum.

 We consider~\eqref{eq:main} involving the leading part of the operator governed by a function $\vp:\Omega\times\rp0\to\rp0$ and, thereby, placing our analysis in an unconventional functional setting, where the norm is defined by the means of the functional 
\begin{flalign}\label{phix}
w\mapsto \int_{\Omega}\varphi(x,|Dw|) \,dx,
\end{flalign}  Let us make an overview of the special cases of the functional framework we capture. The   operator can be governed by power function variable in space, namely $\vp(x,s)=|s|^{p(x)}$, where $p:\Omega\to(1,\infty)$ is log-H\"older continuous, cf.~\cite{CUF}. Another model example we cover are non-uniformly elliptic problems living in spaces with the double phase energy, $\vp(x,s)=|s|^{p}+a(x)|s|^{q}$, where $a\in C^{0,\alpha}(\Omega)$ is nonegative and can vanish in some regions of $\Omega\subset\rn$, while exponents satisfy $1<p\leq q<\infty$ and are close in the sense that $\frac{q}{p}\leq 1+\frac{\alpha}{n}$ necessary for density of smooth functions~\cite{comi,yags}. What is more, we admit problems posed in the reflexive Orlicz setting, when $\vp$ is a~doubling $N$-function $\varphi(x,s)=\vp(s)\in\Delta_2\cap\nabla_2$, including Zygmund-type spaces where $\varphi_{p,\alpha}(s)=s^p\log^\alpha(1+s),$ $p>1,$ $\alpha\in\R$ or compositions and multiplications of functions from the family $\{\varphi_{\bar p,\bar \alpha}\}_{\bar{p},\bar{a}}$ with various parameters. More generally, under certain nondegeneracy and continuity conditions, given as (A0)-(A2) in Section~\ref{sec:prelim}, we capture also general case~\eqref{phix}. The remaining examples we can give here cover all weighted  reflexive Orlicz functionals with non-degenerating weights,  double phase functions with variable exponents $\vp(x,s)=|s|^{p(x)}+a(x)|s|^{q(x)}$, double phase with Orlicz phases $\vp(x,s)=\vp_1(s)+a(x)\vp_2(s)$ or multi-phase cases $\vp(x,s)=\sum_{i=1}^k a_i(x)\vp_i(s)$ (with appropriately regular weights) as long as conditions (A0)--(A2) are satisfied.  We refer to~\cite{IC-pocket} for a more detailed overview of differential equations and~\cite{hahabook} for the fundamental properties of the functional framework.

\subsection*{ Diffuse measures } The natural property of a measure to ensure uniqueness of very weak solutions to~\eqref{eq:main} is that $\mu$ is diffuse with respect to a relevant capacity.  In order to characterize such measures, let us denote  by $\Mb(\Omega)$ the set of bounded measures on $\Omega\subset \R^n$ and by $W^{1,\vp(\cdot)}(\Omega)$ the~Musielak-Orlicz-Sobolev space. See Section~\ref{ssec:spaces} for the introduction to the functional setting and all assumptions and Section~\ref{ssec:cap} for the capacity. By $\MP(\Omega)$ we mean the~set of $\vp(\cdot)$-diffuse measures (or $\vp(\cdot)$-soft measures) consisting of such bounded measures $\mup$ that do not charge sets of $\vp(\cdot)$-capacity zero (for every Borel set $E\subset\Omega$ such that $\capP(E)=0$ it holds that $\mup(E)=0$). One may think that a measure $\mup\in \MP(\Omega)$ is `absolutely continuous with respect to $\capP$'. Our first result is the following theorem.

\begin{theorem}[Characterization of measures]\label{theo:decomp} Suppose $\vp\in\Phi_c(\Omega)$ on a bounded domain $\Omega\subset\rn,$ $n\geq2$. Assume that $\vp$ satisfies (aInc)$_p$, (aDec)$_q$, (A0), (A1), and (A2). When $\mu\in\Mb(\Omega)$, then \[\mu_{\vp(\cdot)}\in\MP(\Omega)\quad\text{if and only if}\quad \mu_{\vp(\cdot)}\in L^1(\Omega)+(W^{1,\vp(\cdot)}_0(\Omega))',\] i.e. there exist
$f\in L^1(\Omega)$ and $G \in (L^{\wt\vp(\cdot)}(\Omega))^n$, such that $\mu_{\vp(\cdot)}=f-\dv G$ in the sense of~distributions. 
\end{theorem}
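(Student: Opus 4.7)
The plan is to prove the two implications separately; the ``if'' direction is short and direct, while the ``only if'' direction carries the technical weight.

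For the ``if'' direction, suppose $\mu=f-\dv G$ with $f\in L^1(\Omega)$ and $G\in(L^{\wt\vp(\cdot)}(\Omega))^n$. Let $E$ be Borel with $\capP(E)=0$. By outer regularity of bounded measures it suffices to show $|\mu|(K)=0$ for every compact $K\subset\Omega$ with $\capP(K)=0$. Using the definition of $\capP$ together with density of smooth functions in $W^{1,\vp(\cdot)}_0(\Omega)$ (valid under (A0)--(A2)), one picks $\phi_k\in C_c^\infty(\Omega)$ with $\phi_k=1$ on a neighbourhood of $K$, $0\le\phi_k\le 1$, and $\|\phi_k\|_{W^{1,\vp(\cdot)}_0}\to 0$, so that along a subsequence $\phi_k\to 0$ both quasi-everywhere and a.e. Testing the decomposition against $\phi_k$ (and, for the signed case, against $\phi_k$ times a smooth approximant of the Radon--Nikodym density of $|\mu|$ with respect to $\mu$) gives
\[
|\mu|(K)\leq \lim_{k\to\infty}\left(\int_\Omega f\phi_k\,dx+\int_\Omega G\cdot\nabla\phi_k\,dx\right)=0,
\]
the first integral vanishing by dominated convergence, the second by the generalized H\"older inequality between $L^{\vp(\cdot)}$ and $L^{\wt\vp(\cdot)}$.

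For the ``only if'' direction, assume $\mu\in\Mb(\Omega)\cap\MP(\Omega)$. By Jordan decomposition $\mu=\mu^+-\mu^-$ (each part remains diffuse since $|\mu|$ vanishes on capacity-zero sets), so I may assume $\mu\geq 0$. Capacity theory in the Musielak--Orlicz--Sobolev setting guarantees that every $v\in V:=W^{1,\vp(\cdot)}_0(\Omega)\cap L^\infty(\Omega)$ admits a $\vp(\cdot)$-quasi-continuous representative $\tilde v$, determined up to sets of capacity zero and therefore $\mu$-a.e.; consequently $L(v):=\int_\Omega\tilde v\,d\mu$ defines a bounded linear form on $V$ in the mixed norm $\|v\|_V:=\|\nabla v\|_{L^{\vp(\cdot)}}+\|v\|_{L^\infty}$. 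The representation $\mu=f-\dv G$ is then produced in the spirit of Boccardo--Gallou\"et--Orsina and Dal Maso--Murat--Orsina--Prignet: approximate $\mu$ by a sequence $\mu_k\in L^\infty(\Omega)$ obtained by truncation and mollification; for each $k$ solve an auxiliary $\vp(\cdot)$-Laplacian-type problem producing $u_k\in W^{1,\vp(\cdot)}_0(\Omega)$ with $-\dv\A(x,\nabla u_k)=\mu_k$ and set $G_k:=\A(x,\nabla u_k)\in(L^{\wt\vp(\cdot)}(\Omega))^n$; extract weak limits $u_k\rightharpoonup u$ and $G_k\rightharpoonup G$. The weak-$\ast$ limit of the residuals $\mu_k-(-\dv G_k)$ is then identified with an $L^1$ function $f$: any would-be concentrated part of the residual would be supported on a set of $\vp(\cdot)$-capacity zero and so contradict the diffuseness of $\mu$, while the Marcinkiewicz-type estimates granted by (aInc)$_p$ together with the Sobolev embedding under (A0)--(A2) provide the integrability needed to place $f$ in $L^1$.

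The principal obstacle is the identification of the $L^1$ piece in the limit. In the classical Sobolev setting the splitting rests on explicit truncation bounds and the embedding $L^1\hookrightarrow W^{-1,p'}$ in appropriate regimes; in the generalized Orlicz setting the $x$-dependent modular structure forces one to invoke the balance and continuity conditions (A1) and (A2) in order to transfer modular estimates between scales and between mollified and non-mollified quantities, and to ensure that smooth functions are dense in $W^{1,\vp(\cdot)}_0(\Omega)$. A further subtlety, already visible in the ``if'' direction, is that $\vp(\cdot)$-quasi-continuous representatives are determined only up to capacity, so the test sequences $\phi_k$ dominating $\1_K$ must converge both a.e.\ and quasi-everywhere; this is supplied by the capacitary-truncation lemmas developed for $W^{1,\vp(\cdot)}_0(\Omega)$ under (A0)--(A2).
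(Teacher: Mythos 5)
Your ``if'' direction is essentially the argument of the paper's Lemma~\ref{lem:cap0}: take smooth $\phi_k$ dominating $\1_K$ with $\|\nabla\phi_k\|_{L^{\vp(\cdot)}}\to 0$, split the action into the $L^1$ part (killed by $\|\phi_k\|_{L^\infty}\le 1$ and $L^1$-approximation of $f$) and the $(W^{1,\vp(\cdot)}_0)'$ part (killed by H\"older between $L^{\vp(\cdot)}$ and $L^{\wt\vp(\cdot)}$). The only wrinkle is your aside about ``the Radon--Nikodym density of $|\mu|$ with respect to $\mu$,'' which is not defined for a signed measure; what you want is $d\mu/d|\mu|$, or simply to apply the positive-measure argument to $\mu^+$ and $\mu^-$ separately, as the paper does.

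Your ``only if'' direction, however, departs from the paper's proof and has a genuine gap. The paper's proof is a purely measure-theoretic/functional-analytic one: the convex, lower semicontinuous functional $\mathcal{F}[u]=\int_\Omega u_+\,d\mup$ on $W^{1,\vp(\cdot)}_0(\Omega)$ is written as a countable supremum of affine functionals $\langle\xi_n,\cdot\rangle-a_n$; the $\xi_n$'s land in $(W^{1,\vp(\cdot)}_0(\Omega))'\cap\Mb(\Omega)$; a weighted sum produces a dominating measure $\gam\in(W^{1,\vp(\cdot)}_0(\Omega))'$ with $d\mup=h\,d\gam$, $h\in L^1(\Omega,\gam)$; truncation on an exhaustion of $\Omega$ splits $\mup=\sum_i\mu_i$ with each $\mu_i\in(W^{1,\vp(\cdot)}_0)'\cap\Mb$; and a mollification step splits each $\mu_i=f_i+w_i$ with $f_i$ smooth and $w_i$ small in $(W^{1,\vp(\cdot)}_0)'$, giving the $L^1$ and dual parts upon summation. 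Your proposed route through PDEs breaks at two points. First, if $u_k$ solves $-\dv\A(x,\nabla u_k)=\mu_k$ and you set $G_k:=\A(x,\nabla u_k)$, then $\mu_k-(-\dv G_k)\equiv 0$ identically, so the ``residual'' whose weak-$\ast$ limit is supposed to produce $f$ is the zero measure -- there is nothing to identify. Second, $\{G_k\}_k$ is \emph{not} uniformly bounded in $(L^{\wt\vp(\cdot)}(\Omega))^n$: for a measure right-hand side the gradient $\nabla u_k$ is only controlled through truncations $\nabla T_t u_k$ (cf.~Proposition~\ref{prop:apriori-basic}), and $\A(\cdot,\nabla u_k)$ is controlled in $L^1$ and Marcinkiewicz-type spaces but generically escapes $L^{\wt\vp(\cdot)}(\Omega)$. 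So there is no weak subsequential limit $G$ in $(L^{\wt\vp(\cdot)}(\Omega))^n$. Worse, if the argument did go through, it would yield $\mu=-\dv G$ with $G\in (L^{\wt\vp(\cdot)}(\Omega))^n$, i.e.\ $\mu\in(W^{1,\vp(\cdot)}_0(\Omega))'$ with \emph{no} $L^1$ part at all, which is false: for slowly growing $\vp$ an $L^1$ function need not belong to $(W^{1,\vp(\cdot)}_0(\Omega))'$, yet every $L^1$ function is trivially a diffuse measure. The $L^1$ component in the decomposition is not a bonus; it is forced, and your scheme has no mechanism to produce it. Replacing your PDE step with the convexity/Hahn--Banach argument, followed by truncation and mollification as in the paper's Steps~1--3, is needed here.
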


\begin{remark} Let us note that upon our assumptions $t^p\lesssim  \vp(\cdot,t)$. If $p>n$ it holds that \[\Mb(\Omega)\subset W^{-1,p'}(\Omega)\subset (W^{1,\vp(\cdot)}_0(\Omega))'.\] In this case all measures are absolutely continuous with respect to the Lebegue measure and, consequently, the result is really meaningful only for slowly growing functions $\vp$.\end{remark}

\begin{remark}\label{rem:decomp} The decomposition of Theorem~\ref{theo:decomp} \[\text{$\MP(\Omega)\ni\mu_{\vp(\cdot)}=f-\dv G$ with $f\in L^1(\Omega)$ and $G \in (L^{\wt\vp(\cdot)}(\Omega))^n$)}\] \underline{\em cannot be unique} as $ L^1(\Omega)\cap (W_0^{1,\vp(\cdot)}(\Omega))'\neq \{0\}.$ On the other hand, for every $\mu\in\Mb(\Omega)$ there exists a \underline{\em unique} decomposition 
\[\mu=\mu_{\vp(\cdot)} +\musp-\musm\]
with some $\mu_{\vp(\cdot)}$ which is absolutely continuous with respect to  ${\vp(\cdot)}$-capacity, while $\musp,\musm\geq 0$ are singular with respect to the $\capP$ (concentrated on some set of $\vp(\cdot)$-capacity zero), see Lemma~\ref{lem:basic-decomp}. 
Consequently, any $\mu\in\Mb(\Omega)$ admitts a decomposition
\[\mu=f+\dv G +\musp-\musm\]
in the sense of distributions, with some $ f\in L^1(\Omega)$, $ G \in (L^{\wt\vp(\cdot)}(\Omega))^n $, and $\capP$-singular $\musp,\musm$.
\end{remark}

Let us point out a consequence of Theorem~\ref{theo:decomp}, which to our best knowledge was not known in the classical  Orlicz-Sobolev spaces.
\begin{corollary}[Orlicz case] Suppose $B:[0,\infty)\to[0,\infty)$ is a Young function, such that $B\in\Delta_2\cap\nabla_2$. Then  $\mu_B\in \Mb(\Omega)$ does not charge the sets of Sobolev $B$-capacity zero if and only if $\mu_B\in L^1(\Omega)+(W^{1,B}_0(\Omega))'$, i.e. there exist $f\in L^1(\Omega)$ and $G \in (L^{\wt{B}}(\Omega))^n$, such that $\mu_B=f-\dv G$. In particular, the special case of this result is the classical measure characterization~\cite{BGO}: if $p>1$, then   $\mu_p\in \Mb(\Omega)$ does not charge the sets of the Sobolev $p$-capacity zero if and only if $\mu_p\in L^1(\Omega)+W^{-1,p'}(\Omega)$, i.e. there exist $f\in L^1(\Omega)$ and $G \in (L^{p'}(\Omega))^n$, such that $\mu_p=f-\dv G$ 
in the sense of distributions.
\end{corollary}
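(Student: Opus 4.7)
The strategy is to derive the Corollary as an immediate specialization of Theorem~\ref{theo:decomp} to the $x$-independent modular $\vp(x,t):=B(t)$; the work reduces to checking that the structural hypotheses of that theorem collapse to $B\in\Delta_2\cap\nabla_2$ in the classical Orlicz setting. First I would observe that any Young function automatically lies in $\Phi_c(\Omega)$ (convex, left-continuous, vanishing at $0$, nondegenerate), and, since $B$ does not depend on $x$, the spatial-continuity conditions (A1) and (A2) hold vacuously. Condition (A0) reduces to $B(1)\in(0,\infty)$, which can be arranged by a harmless normalization of $B$.

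Next I would invoke the standard equivalences from the theory of doubling Young functions (cf.~\cite{hahabook}): $B\in\Delta_2$ is equivalent to (aDec)$_q$ for some finite $q$, while $B\in\nabla_2$, i.e., $\wt B\in\Delta_2$, is equivalent to (aInc)$_p$ for some $p>1$. With every hypothesis of Theorem~\ref{theo:decomp} thereby verified, that theorem yields the first claim verbatim: $\mu_B\in\Mb(\Omega)$ is diffuse with respect to the Sobolev $B$-capacity if and only if $\mu_B=f-\dv G$ with $f\in L^1(\Omega)$ and $G\in(L^{\wt B}(\Omega))^n$, where the capacity $\capP$ of Theorem~\ref{theo:decomp} coincides with the Sobolev $B$-capacity because $\vp$ is $x$-independent.

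For the specialization to $B(t)=t^p$ with $p>1$, one checks $B\in\Delta_2$ (as $B(2t)=2^pB(t)$) and computes $\wt B(t)=(p-1)p^{-p/(p-1)}t^{p'}\in\Delta_2$, so $B\in\nabla_2$. The induced Musielak--Orlicz--Sobolev space $W^{1,B}_0(\Omega)$ coincides with $W^{1,p}_0(\Omega)$, the conjugate Orlicz space $L^{\wt B}(\Omega)$ with $L^{p'}(\Omega)$, and hence $(W^{1,B}_0(\Omega))'=W^{-1,p'}(\Omega)$; moreover the Sobolev $B$-capacity coincides with the classical $p$-capacity. Substituting these identifications in the Orlicz statement recovers the Boccardo--Gallou\"et--Orsina characterization~\cite{BGO}.

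I expect no substantial obstacle: once the dictionary between the indicial growth conditions (aInc)$_p$, (aDec)$_q$ and the pair $\Delta_2\cap\nabla_2$ is recalled, the Corollary is literally the restriction of Theorem~\ref{theo:decomp} to an $x$-independent modular; the only point requiring care is to record explicitly that the $\vp(\cdot)$-capacity defined abstractly in Section~\ref{ssec:cap} agrees with the usual Sobolev $B$-capacity in this setting, which is immediate from the definitions since both are built from the same modular $\int_\Omega B(|\nabla u|)\,dx$.
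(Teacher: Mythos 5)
Your proposal is correct and follows exactly the route the paper intends: verify that an $x$-independent Young function $B\in\Delta_2\cap\nabla_2$ satisfies (A0)–(A2) and (aInc)$_p$, (aDec)$_q$, then read off the claim from Theorem~\ref{theo:decomp}, and specialize to $B(t)=t^p$ for the Boccardo--Gallou\"et--Orsina case. The only small inaccuracy is at (A0): no normalization is needed, since any nondegenerate Young function with $B(0)=0$, $B$ increasing and $\lim_{t\to\infty}B(t)=\infty$ automatically admits a $\beta_0\in(0,1]$ with $B(\beta_0)\le 1\le B(1/\beta_0)$, and at the end the capacity modular is $\int_{\mathbb{R}^n}B(\phi)+B(|\nabla\phi|)\,dx$ rather than $\int_\Omega B(|\nabla u|)\,dx$, though this does not affect the identification of $\capP$ with the Sobolev $B$-capacity.
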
 
We can also deduce from Theorem~\ref{theo:decomp} the decomposition in the variable exponent case.
\begin{corollary}[Variable exponent case] Suppose $p:\Omega\to(1,\infty)$ with $1<p_-\leq p(\cdot)\leq p_+<\infty$ is log-H\"older continuous and $p'(x):=p(x)/(p(x)-1)$. Then  $\mu_{p(\cdot)}\in \Mb(\Omega)$ does not charge the sets of Sobolev $p(\cdot)$-capacity zero if and only if $\mu_{p(\cdot)}\in L^1(\Omega)+W^{-1,p'(\cdot)}(\Omega)$, i.e. there exist $f\in L^1(\Omega)$ and $G \in (L^{p'(\cdot)}(\Omega))^n$, such that $\mu_{p(\cdot)}=f-\dv G$ 
in the sense of distributions, cf.~\cite{Zhang-Zhou}. Again, the special case  is the mentioned above classical power-growth case.
\end{corollary}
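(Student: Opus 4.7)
The plan is to realize the variable exponent framework as an instance of the generalized Orlicz setting of Theorem~\ref{theo:decomp} and then match the dual spaces and capacities. Set $\vp(x,s):=s^{p(x)}$ for $s\geq 0$ and $x\in\Omega$. My first step is a routine verification that this $\vp$ lies in $\Phi_c(\Omega)$: for each $x$, $s\mapsto s^{p(x)}$ is convex and continuous with $\vp(x,0)=0$ and $\vp(x,s)\to\infty$ as $s\to\infty$, and measurability in $x$ follows since $p$ is measurable (in fact continuous by log-H\"older continuity).

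Next I would check the structural conditions required by Theorem~\ref{theo:decomp}. The bounds $1<p_-\leq p(\cdot)\leq p_+<\infty$ immediately yield (aInc)$_{p_-}$ and (aDec)$_{p_+}$, since $s\mapsto s^{p(x)-p_-}$ is non-decreasing and $s\mapsto s^{p(x)-p_+}$ is non-increasing. Condition (A0) is trivial because $\vp(x,1)=1$ for every $x$. Condition (A1), which is the key continuity-of-growth condition in a small ball, follows from log-H\"older continuity of $p$: a standard computation (see, e.g., Diening--Harjulehto--H\"ast\"o--R\r{u}\v{z}i\v{c}ka) gives $s^{p(x)}\lesssim s^{p(y)}+1$ uniformly for $s$ in the relevant range and $|x-y|\leq r$. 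Condition (A2), which governs the behaviour at spatial infinity, is automatic on the bounded domain $\Omega$ for bounded exponents.

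With all hypotheses of Theorem~\ref{theo:decomp} in force, the third step is to identify the conjugate and dual space. The Young conjugate of $s\mapsto s^{p(x)}$ satisfies $\wt\vp(x,s)\simeq s^{p'(x)}$ with constants depending only on $p_\pm$, so the Musielak--Orlicz space $L^{\wt\vp(\cdot)}(\Omega)$ coincides with $L^{p'(\cdot)}(\Omega)$ (with equivalent norms), and consequently the dual $(W^{1,\vp(\cdot)}_0(\Omega))'$ agrees with the usual variable exponent dual $W^{-1,p'(\cdot)}(\Omega)$. Finally I must verify that the Sobolev $\vp(\cdot)$-capacity $\capP$ coincides with the standard variable exponent $p(\cdot)$-Sobolev capacity; this is immediate from the definitions since the modular \eqref{phix} is exactly the $p(\cdot)$-modular used to define the latter.

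The conclusion then follows directly from Theorem~\ref{theo:decomp}: $\mu_{p(\cdot)}\in \MP(\Omega)$ precisely when it decomposes as $f-\dv G$ with $f\in L^1(\Omega)$ and $G\in (L^{p'(\cdot)}(\Omega))^n$. I expect the only mildly technical point to be the verification of (A1) from log-H\"older continuity, which is however textbook in the variable exponent literature. The classical case $p(\cdot)\equiv p$ constant is recovered by noting that all the above spaces and capacities reduce to their standard $L^p$, $W^{-1,p'}$, and $p$-capacity counterparts.
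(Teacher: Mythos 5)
Your proposal is correct and follows exactly the route the paper implicitly intends: the corollary is stated without a separate proof because it is meant to be read off from Theorem~\ref{theo:decomp} by taking $\vp(x,s)=s^{p(x)}$, checking (aInc)$_{p_-}$, (aDec)$_{p_+}$, (A0), (A1) (from log-H\"older continuity), and (A2) (automatic on bounded $\Omega$ with bounded exponents), and then identifying $\wt\vp(x,s)\simeq s^{p'(x)}$ so that $L^{\wt\vp(\cdot)}(\Omega)=L^{p'(\cdot)}(\Omega)$ and $\capP$ is the usual $p(\cdot)$-Sobolev capacity. This matches the standard variable-exponent verifications in the literature the paper cites, so there is nothing to add.
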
 
More examples we can give here are decompositions in double phase spaces within the sharp range of powers, as well as some weighted Orlicz, variable exponent double phase, or multi-phase cases, under the prescribed natural non-degeneracy and continuity conditions.

\subsection*{Measure-data problems}

\subsubsection*{\underline{Assumptions}} Given $\vp\in\Phi_c(\Omega)$ on a bounded domain $\Omega\subset\rn,$ $n\geq2$, such that $\vp$ satisfies (aInc)$_p$, (aDec)$_q$, (A0), (A1), and (A2), we shall study equation~\eqref{eq:main} where vector field $\A$ satisfies the following conditions:\begin{itemize}
\item[$(\A 1)$] $\A:\Omega\times\rn\to\rn$ is Carath\'eodory function, i.e. it is measurable with respect to the first variable and continuous with respect to the last one;
\item[$(\A 2)$] There exist numbers $c_1^\vp,c_2^\vp >0$ and a function $0\leq \gamma\in L^{\wt\vp(\cdot)}(\Omega)$, such that for a.e. $x\in\Omega$ and all $\xi\in\rn$ the following ellipticity and growth conditions are satisfied
\[c_1^{\vp} \vp(x,|\xi|)\leq \A(x,\xi)\cdot\xi\quad\text{and}\quad |\A(x,\xi)|\leq c_2^\vp\left(1+\gamma(x)+\vp(x,|\xi|)/|\xi|\right).\] 
\item[$(\A 3)$]  $\A$ is monotone, i.e. for a.e. $x\in\Omega$ and all $\eta\neq \xi\in\rn$ 
\[\big(\A(x,\eta)-\A(x,\xi)\big)\cdot(\eta-\xi)>0.\]
\item[$(\A 4)$]  For a.e. $x\in\Omega$  it holds that $ \A(x,0)=0.$
\end{itemize} 

\subsubsection*{\underline{Special cases}}
\noindent Of course, $(\A 1)$--$(\A 4)$ with $\vp\in\Phi_c(\Omega)$ satisfying (aInc)$_p$, (aDec)$_q$, (A0), (A1), and (A2) embrace not only classical conditions in the case when $\vp(x,s)=s^p$:
\[c_1^p |\xi|^p\leq \A(x,\xi)\cdot\xi\quad\text{and}\quad |\A(x,\xi)|\leq c_2^p\left(1+\gamma(x)+|\xi|^{p-1}\right)\]
with $0\leq\gamma\in L^{p'}(\Omega)$ with the special case of (possibly weighted) $p$-Laplacian. When $\vp(x,s)=s^{p(x)}$ it covers 
\[c_1^{p(\cdot)} |\xi|^{p(x)}\leq \A(x,\xi)\cdot\xi\quad\text{and}\quad |\A(x,\xi)|\leq c_2^{p(\cdot)}\left(1+\gamma(x)+|\xi|^{p(x)-1}\right)\]
with  $0\leq\gamma\in L^{p(\cdot)/(p(\cdot)-1)}(\Omega)$ with the special case of (possibly weighted) $p(x)$-Laplacian. We allow for all $p:\Omega\to (1,\infty)$ under typical assumptions that $1<p_-\leq p(x)\leq p_+$ and $p$ is log-H\"older continuous, i.e. when there exists $c>0$ such that $|p(x)-p(y)|\leq -{c}/{\log(|x-y|)}$ for $|x-y|<{1}/{2}$. In the double-phase case $\vp_{dp}(x,s)=s^{p}+a(x)s^q$, $0\leq a\in C^{0,\alpha}(\Omega)$, $q/p\leq 1 +\alpha/n$, it covers non-uniformly elliptic operators satisfying
\[c_1^{(p,q)} |\xi|^{p}\leq \A(x,\xi)\cdot\xi\quad\text{and}\quad |\A(x,\xi)|\leq c_2^{(p,q)}\left(1+\gamma(x)+|\xi|^{p-1}+a(x)|\xi|^{q-1}\right)\]
with $0\leq\gamma\in L^{\wt\vp_{dp}(\cdot)}(\Omega).$ 
 Finally, in Orlicz case when $B\in C^1(\rp0)$ is a~doubling $N$-function it also retrieves typically considered conditions
\[c_1^B B(|\xi|)\leq \A(x,\xi)\cdot\xi\quad\text{and}\quad |\A(x,\xi)|\leq c_2^B\left(1+\gamma(x)+B'(|\xi|)\right),\quad\text{with  $0\leq \gamma\in L^{\wt B}(\Omega)$.}\] To give more examples one can consider problems in weighted Orlicz,  double phase with variable exponents, or multi-phase Orlicz cases, as long as $\vp(x,s)$ is comparable to a~function doubling  with respect to the second variable and satisfy nondegeneracy conditions (A0)--(A2).

\subsubsection*{\underline{Notation}} 
We give here only the notation necessary to understand the formulation of our main result, more preliminary information is presented in Section~\ref{sec:prelim}.\\ Distributional solutions to equation $-\Delta_p u=\mu$ when $p$ is small ($1<p<2-1/n$) do not necessarily belong to $W^{1,1}_{loc}(\Omega)$. The easiest example to give is the fundamental solution (when $\mu=\delta_0$). This restriction on the growth can be dispensed by the use of a~weaker derivative. We make use of the symmetric truncation $T_k:\R\to\R$ defined as 
\begin{equation}T_k(s)=\left\{\begin{array}{ll}s & |s|\leq k,\\
k\frac{s}{|s|}& |s|\geq k.
\end{array}\right. \label{Tk}
\end{equation}
Note that as a consequence of \cite[Lemma~2.1]{BBGGPV} for every function $u$, such that $T_t(u)\in W^{1,\vp(\cdot)}_0(\Omega)$ for every $t>0$ there exists a (unique) measurable function
$Z_u : \Omega \to \rn$ such that
\begin{equation}\label{gengrad} \nabla T_t(u) = \chi_{\{|u|<t\}} Z_u\qquad \hbox{ for
a.e. in $\Omega$ and for every $t > 0$.}
\end{equation}
With an abuse of~notation, we denote $Z_u$ simply by $\nabla u$ and call it a {\it generalized gradient}.
 
In order to introduce definitions of very weak solutions we define the space
\begin{equation}
\label{sp-tr}
\mathcal{ T}_0^{1,\vp(\cdot)}(\Omega)=\{u\text{ is measurable in }\Omega :\ T_t(u)\in W^{1,\vp(\cdot)}_0(\Omega)\text{ for every }t>0\},
\end{equation}
where $W^{1,\vp(\cdot)}_0(\Omega)$ is the completion of $C_0^\infty(\Omega)$ in norm of $W^{1,\vp(\cdot)}(\Omega)$. In fact, $u\in W_0^{1,\vp(\cdot)}(\Omega)$ if and only if $u\in\mathcal{T}_0^{1, \vp(\cdot)}(\Omega)$
and
$Z_u\in L^{\vp(\cdot)}(\Omega; \rn)$. In the latter case, $Z_u = \nabla u$ a.e. in $\Omega$.

\subsubsection*{\underline{Very weak solutions}}
We define two kinds of very weak solutions to problem~\eqref{eq:main} uder assumptions $(\A 1)$--$(\A 4)$ involving a  measure $\mu\in\Mb(\Omega)$. 
 

\medskip

Inspired by~\cite{BG,CiMa,DaMMa} we define solutions that can be reached in the limit of solutions to approximate problems.
\begin{definition}\label{def:sola}
A function $u\in 
\mathcal{ T}_0^{1,\vp(\cdot)}(\Omega)$ is called an {\em approximable solution} to problem~\eqref{eq:main} if $u$ is an a.e. limit of a sequence of solutions $\{u_s\}_s\subset  W_0^{1,\vp(\cdot)}(\Omega)$ to
\begin{flalign}\label{eq:sola-decomp}
\int_\Omega \A(x,\nabla u_s)\cdot\nabla \phi\, dx  =\int_\Omega \, \phi\,d\mu^s \quad\text{for any $\phi\in W^{1,\vp(\cdot)}_0(\Omega)\cap L^\infty(\Omega)$,}
\end{flalign} 
when $\{\mu^s\}\subset C^\infty(\Omega)$ is a sequence of bounded functions  that converges to $\mu$ weakly-$*$ in the space of measures  and such that \begin{equation}
\label{muslim}
\limsup_{s\to 0}|\mu^s|(\overline{B})\leq |\mu|(\overline{B})\quad\text{for every $B\subset\Omega$.}
\end{equation}
\end{definition} 
\noindent The definition seems {\em very weak} as we refrain from assuming any convergence of the gradients of approximate solutions. Nonetheless, this is enough to show in the proofs that for fixed $k$ also $\A(\cdot,\nabla (T_k u_s))\to \A(\cdot,\nabla(T_k u))$ a.e. in $\Omega$ and thus it is justified to call $u$ a~{\em solution} (though in a {\em very weak} sense).

\medskip

Having~\cite{DMMOP1} and Remark~\ref{rem:decomp} on measure decomposition (to parts being absolutely continuous and singular with respect to generalized capacity) we consider renormalized solutions according to the following definition.
\begin{definition}\label{def:rs}
A function $u\in \mathcal{ T}_0^{1,\vp(\cdot)}(\Omega)$ is called a {\em renormalized solution} to problem~\eqref{eq:main} with $\mu\in\Mb(\Omega)$, if 
\begin{itemize}
\item[(i)] for every $k>0$ one has $\ \A(x,\nabla (T_k u))\in L^{\wt\vp(\cdot)}(\Omega);$
\item[(ii)] $\mu$ is decomposed to $\mu=\mup+\musp-\musm$, with $\mup\in\MP(\Omega)$ and nonnegative $\musp ,\musm \in \big( \Mb(\Omega)\setminus\MP(\Omega)\big)\cup\{0\}$, then
\begin{flalign}
\nonumber \int_\Omega &\A(x,\nabla u)\cdot\nabla u\, h'(u)\phi\, dx+\int_\Omega \A(x,\nabla u)\cdot\nabla\phi\, h(u)\, dx\\ 
&=\int_\Omega  h(u)\phi\,d\mup(x)+h(+\infty)\int_\Omega \phi\,d\musp (x)-h(-\infty)\int_\Omega  \phi\,d\musm (x), \label{eq:renorm-decomp}
\end{flalign} 
holds for any $h\in W^{1,\infty}(\R)$ having $h'$ with compact support and for all $\phi\in C_0^{\infty}(\Omega)$, where $h(+\infty):=\lim_{r\to+\infty}h(r)$ and $h(-\infty):=\lim_{r\to+\infty}h(r)$ are well-defined as $h$ is constant close to infinities.
\end{itemize}
\end{definition} 
\noindent Recall that the assumption on the modular function $\vp$ are given in Section~\ref{sec:prelim}.

\medskip

\noindent Our main result reads as follows. 
 \begin{theorem}\label{theo:main} Let $\vp\in\Phi_c(\Omega)$ on a bounded Lipschitz domain $\Omega\subset\rn,$ $n\geq2$. Suppose that $\vp$ satisfies (aInc)$_p$, (aDec)$_q$, (A0), (A1), and (A2), whereas a vector field $\A:\Omega\times\rn\to\rn$ satisfies ($\A 1$)--($\A 4$). When $\mu\in\Mb(\Omega)$, then the following claims hold true.\begin{itemize}
 \item[ (i) ] There exists an approximable solution to problem~\eqref{eq:main}.  
 \item[ (ii) ] There exists a renormalized solution   to problem~\eqref{eq:main} satisfying~\eqref{eq:renorm-decomp} with measures such that ${\rm supp}\,\mup\subset\{|u|<\infty\},$ ${\rm supp}\,\musp\subset\cap_{k>0}\{u>k\}$, and ${\rm supp}\,\musm\subset\cap_{k>0}\{u<-k\}$.
 \item[(iii)] A function $u\in\mathcal{T}^{1,\vp(\cdot)}_0(\Omega)$ is an approximable solution from (i) if and only if it is a~renormalized solution from (ii).
 \item[(iv)] If additionally the measure datum is $\vp(\cdot)$-diffuse ($\mu\in\MP(\Omega)$), then approximable solution and renormalized solutions are unique.
\end{itemize} 
 \end{theorem}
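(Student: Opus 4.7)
The plan is to proceed in four steps mirroring the four claims, building everything on one approximation scheme and extracting what is needed at each level.

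For (i), I would regularize $\mu$ by standard mollification so that $\mu^s\in C^\infty(\Omega)$ satisfies weak-$*$ convergence and the mass bound \eqref{muslim}. Under $(\A 1)$--$(\A 4)$ together with (aInc)$_p$, (aDec)$_q$, (A0), (A1), (A2), the operator $v\mapsto -\dv \A(x,\nabla v)$ is bounded, coercive, and strictly monotone from $W^{1,\vp(\cdot)}_0(\Omega)$ to its dual, so Browder--Minty provides a unique $u_s\in W^{1,\vp(\cdot)}_0(\Omega)$ solving~\eqref{eq:sola-decomp}. Testing with $T_k(u_s)$ gives $\int_\Omega \vp(x,|\nabla T_k u_s|)\,dx\leq C k\,|\mu|(\Omega)$, and the (aInc)$_p$ lower bound translates this into Marcinkiewicz-type estimates on both $u_s$ and $\nabla u_s$ in the spirit of~\cite{BBGGPV,CiMa,ACCZG,CGZG}. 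A diagonal extraction yields $u_s\to u$ a.e.\ with $u\in\mathcal{T}_0^{1,\vp(\cdot)}(\Omega)$ and $T_k(u_s)\rightharpoonup T_k(u)$ weakly in $W^{1,\vp(\cdot)}_0(\Omega)$ for each $k$. The main obstacle is the a.e.\ convergence of the truncated gradients $\nabla T_k(u_s)\to \nabla T_k(u)$: I would follow the Boccardo--Murat--Dall'Aglio monotonicity trick, testing with $T_\delta(T_k(u_s)-T_k(u)^\eta)$ where $(\cdot)^\eta$ is a mollification in $W^{1,\vp(\cdot)}_0$, then using (A3) to deduce modular convergence of $\vp(x,|\nabla T_k(u_s-u)|)$; the Musielak--Orlicz inhomogeneity forces one to invoke the density of smooth functions ensured precisely by (A0)--(A2) (cf.~\cite{pgisazg1,yags}) in order to make the smoothing step legitimate, which is the technically delicate point. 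Once this is established, $\A(\cdot,\nabla T_k u_s)\to \A(\cdot,\nabla T_k u)$ in $L^1$, and $u$ is an approximable solution.

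For (ii), I would use Remark~\ref{rem:decomp} to split $\mu=\mup+\musp-\musm$ with $\mup\in\MP(\Omega)$ and $\musp,\musm\geq 0$ singular with respect to $\capP$; by Theorem~\ref{theo:decomp}, $\mup=f-\dv G$ with $f\in L^1$, $G\in (L^{\wt\vp(\cdot)})^n$. Constructing the mollifications $\mu^s$ componentwise, one can arrange that the approximations of $\musp$ (respectively $\musm$) concentrate on shrinking neighbourhoods of the $\capP$-null set carrying them, hence on $\{u>k\}$ (resp.~$\{u<-k\}$) for every fixed $k$ once $s$ is small. Testing~\eqref{eq:sola-decomp} with $h(u_s)\phi$ for $h\in W^{1,\infty}(\R)$ with compactly supported derivative and $\phi\in C_0^\infty(\Omega)$, and passing to the limit using the $L^1$-convergence of $\A(\cdot,\nabla T_k u_s)$, the a.e.\ convergence of $h(u_s),h'(u_s)$, and the splitting of $\mu^s$, yields exactly~\eqref{eq:renorm-decomp} together with the required support properties of $\musp,\musm$.

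For (iii), the renormalized solution built in (ii) is by construction the a.e.\ limit of classical solutions to~\eqref{eq:sola-decomp}, so it is automatically approximable. Conversely, given an approximable solution $u$, the same gradient-convergence argument from (i) applies along its defining sequence $\{u_s\}$, and inserting $h(u_s)\phi$ in \eqref{eq:sola-decomp} with $h$ as above yields \eqref{eq:renorm-decomp} in the limit; the decomposition $\mu=\mup+\musp-\musm$ from Remark~\ref{rem:decomp} is invoked to identify the limit of $\int h(u_s)\phi\,d\mu^s$.

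Finally, for (iv) with $\mu\in\MP(\Omega)$, Theorem~\ref{theo:decomp} gives $\musp=\musm=0$ and $\mu=f-\dv G$. Let $u_1,u_2$ be two renormalized solutions. I would subtract their formulations tested with $h_k(u_1)h_k(u_2)\,T_\delta(T_m(u_1)-T_m(u_2))$, where $h_k$ is a $W^{1,\infty}$ cutoff of $[-k,k]$, along the lines of~\cite{DMMOP1,Zhang,LvLiZou}. The terms carrying $f$ and $G$ drop out as $\delta\to 0$ thanks to the $L^1+(W^{1,\vp(\cdot)}_0)'$ structure of $\mup$ and the Musielak--Orlicz duality. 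Strict monotonicity (A3) on the remaining term forces $\nabla T_m(u_1)=\nabla T_m(u_2)$ a.e.\ on $\{|u_1|<k,|u_2|<k\}$; letting $k,m\to\infty$ and using the Poincaré inequality on each truncation yields $u_1=u_2$. All manipulations live at the truncated level, so the generalized gradient \eqref{gengrad} suffices even though $u_i$ need not lie in $W^{1,\vp(\cdot)}_0(\Omega)$.
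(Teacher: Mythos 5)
Your overall architecture (mollify $\mu$, extract a priori estimates, pass to the limit, derive the renormalized identity, prove uniqueness by monotonicity) matches the paper, but several of your technical choices diverge, and one of them opens a genuine gap.

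\textbf{On (i).} You propose the Boccardo--Murat--Dall'Aglio route, testing with $T_\delta(T_k(u_s)-T_k(u)^\eta)$ where $T_k(u)^\eta$ is a mollification of the limit. The paper instead proves that $\{\nabla u_s\}_s$ is Cauchy in measure by a pairwise comparison: it estimates $|\{|\nabla u_l-\nabla u_m|>t\}|$ via the infimum function $\psi(x)=\inf_{(\xi,\eta)\in S}(\A(x,\xi)-\A(x,\eta))\cdot(\xi-\eta)$ over a compact set $S$, and tests the difference of the $l$- and $m$-equations with $T_r(u_l-u_m)$. This avoids entirely the need to approximate the limit function by smooth functions, which is where your approach becomes delicate: the limit $u$ has only a generalized gradient in the sense of~\eqref{gengrad}, and mollifying $T_k(u)$ modularly in $W^{1,\vp(\cdot)}_0$ and then controlling the error terms coming from the mollification is exactly the technical point you flag but do not resolve. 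Both routes can be made to work; the paper's pairwise-Cauchy argument is cleaner in the Musielak--Orlicz setting because it never touches the limit function directly.

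\textbf{On (ii), there is a real gap.} You propose to build the approximating sequence $\mu^s$ so that the pieces approximating $\musp$ (resp. $\musm$) ``concentrate on shrinking neighbourhoods of the $\capP$-null set carrying them, hence on $\{u>k\}$ (resp.\ $\{u<-k\}$).'' The word ``hence'' is unjustified and the reasoning is circular. The singular measure $\musp$ is concentrated on some Borel set $N^+$ with $\capP(N^+)=0$, but that $N^+\subset\bigcap_{k>0}\{u>k\}$ is \emph{precisely the content of the support statement in part (ii)}; it is not available a priori, and you cannot arrange your mollification around the level sets of a function $u$ that is only produced at the end of the limit procedure. The paper avoids this by working at the level of the already-constructed approximable solution: Proposition~\ref{prop:sola-trunc-meas} introduces $\lambda_k:=\oppA(T_k u)\in\Mb(\Omega)\cap(W^{1,\vp(\cdot)}_0(\Omega))'$, proves via the quasi-open exhaustion lemmas that $\lambda_k$ restricted to $\{|u|<k\}$ stabilizes to a diffuse measure $\vartheta$, and that $\lambda_k\mres\{|u|>k\}=0$, so that $\lambda_k=\vartheta\mres\{|u|<k\}+\nu_k^+-\nu_k^-$ with $\nu_k^\pm$ carried on $\{u=\pm k\}$. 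Letting $k\to\infty$ identifies $\mup=\vartheta$ and shows $\nu_k^\pm\rightharpoonup\musp,\musm$ with the claimed supports. The localization of the singular parts is thus a \emph{conclusion} of the capacitary analysis of $\lambda_k$, not an input you can feed into the construction of $\mu^s$. As written, your step (ii) would fail.

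\textbf{On (iv).} You prove uniqueness at the renormalized level, subtracting two renormalized formulations with $h_k(u_1)h_k(u_2)T_\delta(T_m(u_1)-T_m(u_2))$, in the spirit of~\cite{DMMOP1}. The paper instead proves uniqueness at the approximable level: it goes back to the approximating problems with data $f^i_s-\dv G^i_s$ coming from Theorem~\ref{theo:decomp}, tests the difference with $T_t(T_l(v^1_s)-T_l(v^2_s))$, and uses monotonicity plus the strong $L^1\times L^{\wt\vp(\cdot)}$ convergence of the data; uniqueness for renormalized solutions then follows via the equivalence (iii). Your route is also standard and should work, but it requires controlling the extra cutoff terms $h_k'(u_i)$ as $k\to\infty$, which the paper's shorter argument sidesteps. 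This is a genuinely different (and more involved) path, not an error.

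In summary: (i), (iii), (iv) are viable alternative routes (with (i) being technically heavier in this inhomogeneous setting), but your argument for (ii) is circular as stated and needs to be replaced by an analysis of $\oppA(T_k u)$ along the lines of Proposition~\ref{prop:sola-trunc-meas}.
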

\noindent As $h\equiv 1$ is an admissible choice in~\eqref{eq:renorm-decomp}, we get the following remark.
\begin{remark}  Under the assumptions of Theorem~\ref{theo:main} if $u$ is an approximable (equivalently, renormalized) solution, then
\[\int_\Omega \A(x,\nabla u)\cdot\nabla\phi\, dx =\int_\Omega \phi\,d\mu\qquad\text{for all }\ \phi\in C_0^\infty(\Omega),\]
so $u$ is then a solution in the distributional sense (which in particular is proven to exist).
 \end{remark}
 
 Moreover, for problems involving $\vp(\cdot)$-diffuse measures, by Theorem~\ref{theo:decomp} and Proposition~\ref{prop:sola-trunc-meas}, we can formulate  the following conclusion.
 \begin{corollary} Under the assumptions of Theorem~\ref{theo:main} if $u$ is an approximable (equivalently, renormalized) solution and $\mu\in\big(L^1(\Omega)+(W^{1,\vp(\cdot)}_0(\Omega))'\big)\cap\Mb(\Omega)$, then $u$ exists, is unique, and satisfies\[\limsup_{k\to\infty}\int_{\{k<|u|<k+1\}} \A(x,\nabla u)\cdot\nabla u\, dx=0.\] 
 \end{corollary}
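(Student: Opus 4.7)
The existence and uniqueness claims require no new input: by Theorem~\ref{theo:decomp} the hypothesis $\mu\in(L^1(\Omega)+(W^{1,\vp(\cdot)}_0(\Omega))')\cap\Mb(\Omega)$ is equivalent to $\mu\in\MP(\Omega)$, so Theorem~\ref{theo:main}(i)--(iv) furnishes the unique approximable (equivalently, renormalized) solution~$u$.

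For the annular-energy decay, my plan is to combine the decomposition $\mu=f-\dv G$ from Theorem~\ref{theo:decomp} (with $f\in L^1(\Omega)$, $G\in (L^{\wt\vp(\cdot)}(\Omega))^n$) with a truncation-type test in the approximable formulation~\eqref{eq:sola-decomp} in the spirit of Proposition~\ref{prop:sola-trunc-meas}. One chooses an approximating sequence of the form $\mu^s=f^s-\dv G^s$ with $f^s\to f$ in $L^1(\Omega)$ and $G^s\to G$ modularly in $L^{\wt\vp(\cdot)}(\Omega;\rn)$, and inserts into~\eqref{eq:sola-decomp} the admissible test function $\psi_k^s:=T_1(u_s-T_k u_s)\in W^{1,\vp(\cdot)}_0(\Omega)\cap L^\infty(\Omega)$, whose generalized gradient is $\nabla u_s\,\chi_{E_k^s}$ with $E_k^s:=\{k<|u_s|<k+1\}$. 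This yields
\[\int_{E_k^s}\A(x,\nabla u_s)\cdot\nabla u_s\,dx=\int_\Omega f^s\psi_k^s\,dx+\int_{E_k^s}G^s\cdot\nabla u_s\,dx,\]
and Young's inequality with a small parameter $\ep<c_1^\vp$ in the $(\vp,\wt\vp)$-duality on the $G^s$-term, together with ($\A 2$) on the left, absorbs the gradient contribution.

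The main technical hurdle is the joint passage $s\to\infty$ followed by $k\to\infty$. Letting $s\to\infty$ for fixed $k$ relies on the truncation-convergence properties recorded in Proposition~\ref{prop:sola-trunc-meas}: $u_s\to u$ a.e., so $\psi_k^s\to\psi_k:=T_1(u-T_k u)$ a.e.\ with $|\psi_k^s|\le 1$, paired with strong $L^1$-convergence of $f^s$ and modular convergence of $G^s$. This produces
\[(1-\ep/c_1^\vp)\int_{E_k}\A(x,\nabla u)\cdot\nabla u\,dx\le \int_\Omega |f|\,|\psi_k|\,dx+C_\ep\int_{E_k}\wt\vp(x,|G|)\,dx.\]
As $k\to\infty$, dominated convergence forces the first right-hand term to $0$ since $|\psi_k|\le 1$ and $\psi_k\to 0$ a.e.\ by a.e.\ finiteness of $u$. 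The $\wt\vp$-term vanishes because $|E_k|\le |\{|u|>k\}|\to 0$ and the $L^{\wt\vp(\cdot)}$-modular is absolutely continuous with respect to Lebesgue measure; this is the step in which the structural hypotheses (aDec)$_q$ and (A0)--(A2), yielding the $\Delta_2$-type condition on $\wt\vp$, are essential. The claim $\limsup_{k\to\infty}\int_{E_k}\A(x,\nabla u)\cdot\nabla u\,dx=0$ follows.
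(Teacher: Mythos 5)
Your argument is correct in substance, but it follows a rather different route than the paper's hint. The paper states the corollary without proof, pointing to Theorem~\ref{theo:decomp} and Proposition~\ref{prop:sola-trunc-meas}: the intended line seems to be to work at the level of the renormalized identity~\eqref{eq:renorm-decomp} (or of~\eqref{oppA-meas}) with $h=T_{k+1}-T_k$, exploiting that the singular parts $\musp,\musm$ vanish when $\mu\in\MP(\Omega)$ and that $\lambda_{k+1}=\oppA(T_{k+1}u)$ is a bounded measure converging weakly-$*$ to $\mu$ with controlled support. You instead push the whole computation back to the \emph{approximate} problems, using the explicit decomposition $\mu=f-\dv G$ from Theorem~\ref{theo:decomp}, the admissible test function $T_1(u_s-T_k u_s)$, and Fenchel--Young absorption; this is a perfectly viable alternative, closer in spirit to the uniqueness argument of Proposition~\ref{prop:uniq-sola}, and it in fact sidesteps a potential subtlety in the paper's route, namely controlling the total masses $\nu_{k}^{\pm}(\Omega)$ from mere weak-$*$ convergence $\nu_k^{\pm}\rightharpoonup 0$ on the open set $\Omega$.

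A few points you should tighten before this is watertight. (1) The a.e.\ convergences $u_s\to u$ and $\nabla u_s\to\nabla u$ (hence $\A(\cdot,\nabla T_{k+1}u_s)\cdot\nabla T_{k+1}u_s\to\A(\cdot,\nabla T_{k+1}u)\cdot\nabla T_{k+1}u$) come from Proposition~\ref{prop:convI}, not from Proposition~\ref{prop:sola-trunc-meas}; the attribution should be corrected. (2) The passage $s\to 0$ on the left-hand side is via Fatou, using $\chi_{E_k^s}\A(x,\nabla u_s)\cdot\nabla u_s\ge 0$ together with Lemma~\ref{lem:ae} to get $\chi_{E_k^s}\to\chi_{E_k}$ a.e.\ for regular values $k,k+1$; you should say so, and then observe that the final conclusion upgrades to all $k$ since the right-hand side is monotone/continuous in $k$. (3) For the $G^s$-term you either need the dominated-construction condition~\eqref{Gs} (as the paper imposes in Proposition~\ref{prop:uniq-sola}) so that $\int_{E_k^s}\wt\vp(x,|G^s|)\,dx\le 2\int_{E_k^s}\wt\vp(x,|G|)\,dx\to 2\int_{E_k}\wt\vp(x,|G|)\,dx$, or a short argument that modular convergence plus $\wt\vp\in\Delta_2$ yields $L^1$-convergence of $\wt\vp(\cdot,|G^s|)$; as written this step is asserted rather than justified. (4) The $\Delta_2$-condition on $\wt\vp$ (used for $\wt\vp(\cdot,|G|)\in L^1(\Omega)$ and the absolute continuity at the end) is a consequence of (aInc)$_p$ for $\vp$, not of (aDec)$_q$; the latter gives $\Delta_2$ for $\vp$ itself. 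With these cosmetic fixes the proof is sound.
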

 
  As a direct consequence of Theorem~\ref{theo:main} we retrieve the already classical existence results of~\cite{BGO,DMMOP1} involving $p$-Laplace operator, as well as variable exponent ones~\cite{Zhang,Zhang-Zhou}. We extend the existence results for problems in reflexive Orlicz spaces proven in~\cite{CiMa} towards inhomogeneity of the spaces, as well as we extend the uniqueness result from $L^1$ to the diffuse measure data. It should be noticed that renormalized solutions to general measure data problems with Orlicz growth were not studied so far. We also obtain the main goals of~\cite{pgisazg1,gw-e} within a different and a bit more restrictive functional framework (and slightly different kind of control on the modular function),  but allowing for essentially broader class of data and providing uniqueness.  To our best knowledge no results on equivalence of very weak solutions has been so far addressed in problems stated in generalized Orlicz spaces even in the $L^1$-data case, for the $p$-Laplace case we refer to~\cite{DMMOP1,kikutu}. Finding a setting where they essentially do not coincide would be interesting. 
 Given an interest one may expect developing our main goals further towards anisotropic or non-reflexive settings cf.~\cite{ACCZG,CGZG,pgisazg1}, as well as by involving lower-order terms in~\eqref{eq:main} as in~\cite{gw-e},  differential inclusions as in~\cite{dkg}, or systems of equations.
 
 There is some available information on the regularity of our very weak solutions following from comparison to solutions to problems with Orlicz growth. The conditions on $\vp(\cdot)$ imply that there exists a Young function $B:\rp0\to\rp0$ such that $B(s)\leq \vp(x,s)$ for a.a. $x\in\Omega$ and all $s\geq 0$. Then  any of the very weak solutions of Theorem~\ref{theo:main}  belongs to $\mathcal{T}_0^{1,\vp(\cdot)}(\Omega)\subset \mathcal{T}_0^{1,B}(\Omega)$. Thus, we can get the same regularity of these solutions and their gradients expressed in Orlicz-Marcinkiewicz scale as in~\cite[Theorem 3.2]{CiMa}. See~\cite[Example 3.4]{CiMa} for applications with particular growth of~$B$ (including Zygmund-type ones). On the other hand, precise informations on the local behaviour of solutions to problems with Orlicz growth obtained as a consequense of Wolff-potential estimates can be found in~\cite{CGZG-Wolff} depending on the scale of datum (in Orlicz versions of Lorentz, Marcinkiewicz, and Morrey scales). When the growth of $B$ is super-quadratic, we have also provided more precise information of the gradient of solutions. In fact, \cite{gradest} gives the Orlicz-Lorentz-Morrey-type regularity for gradients of solutions to problems involving related classes of measures, moreover,~\cite{gradest2} describes the regularizing effect of the lower-order term (in the same scale). For Riesz potential estimates for such problems see~\cite{Baroni}.

 The main ideas of the proofs follows many seminal papers including~\cite{BGO,BBGGPV,BM,DMMOP1}  and involve analysis of fine convergence of solutions of some approximate problems. Nonetheless, the functional setting is far more demanding. In fact, we employ a lot of very recent results on structural properties of the generalized Orlicz spaces and  nonstandard capacities, see e.g.~\cite{bahaha,demi,hahabook,haju}, and study properties of measures exposing certain capacitary properties.

As for organization -- after preliminary part, the measure characterization is proven in Section~\ref{sec:meas}, Section~\ref{sec:appr} is devoted to approximate problems. 
Approximable solutions are investigated in Section~\ref{sec:solas}, while renormalized oned in Section~\ref{sec:rs}. Uniqueness is proven in Section~\ref{sec:uniq}. The summary of the main proof is presented in Section~\ref{sec:main-proof}.

\section{Preliminaries} \label{sec:prelim}

\subsection{Notation} By $\Omega$ we always mean a bounded set in $\rn$, $n\geq 2$. We shall make use of symmetric truncations of a real-valued function\begin{equation}
\label{trunc} T_k (s)=\max\{-k,\min\{s,k\}\}.
\end{equation}
Also, we make use of a Lipschitz continuous cut-off function $\psi_l:\R\to\R$ by 
\begin{equation}\label{psil}
\psi_l(r):= 
\min\{(l+1-|r|)^+,1\}.
\end{equation}

By $\mu_1\ll\mu_2$ we denote we mean that $\mu_1$ is absolutely continuous with respect to $\mu_2$.

We study spaces of functions defined in $\Omega$, $\R$, or $\rn$.  $L^0(\Omega)$ denotes the set of measurable functions defined on $\Omega$,    $C_0(\Omega)$ are continuous functions taking value zero on $\partial\Omega$, while  $C_b(\Omega)$ -- continuous  functions bounded on $\Omega$; $\Mb(\Omega)$ are Radon measures with bounded total variation in~$\Omega$; $\MP(\Omega)$ -- bounded Radon measures diffuse with respect to $\vp(\cdot)$-capacity. If $\mu\in\Mb(\Omega)$, $E$ is a Borel set included in $\Omega$, the measure $\mu\mres E$ is defined by $(\mu\mres E)(B)=\mu(E\cap B)$ for any Borel set $B\subset\Omega$. If $\mu\in\Mb(\Omega)$ is such that $\mu=\mu\mres E,$ then we say that $\mu$ is concentrated on $E$. In general, one cannot define the smallest set (in the sense of inclusion) where the measure is concentrated. By $L^1(\Omega,\mu)$ we denote classically functions with absolute value integrable with respect to $\mu$, shortened to $L^1(\Omega)$ if $\mu$ is Lebesgue's measure.\\ When $\mu_k,\mu\in\Mb(\Omega)$, we say that $\mu_k\to\mu$ weakly-$\ast$ in the space of measures if
\[\lim_{k\to\infty}\int_\Omega \phi\, d\mu_k=\int_\Omega \phi\, d\mu\qquad\text{for every }\phi\in C_0(\Omega).\]


\begin{lemma} \label{lem:ae}
If $g_n:\Omega\to \R$ are measurable functions converging 
to $g$ almost everywhere, then for each regular value $t$ 
of the limit function $g$ we have $\mathds{1}_{\{t<|g_n|\}}\xrightarrow[n\to\infty]{}\mathds{1}_{\{t<|g|\}}$ a.e. in $\Omega$. // Here the term `regular value' denotes a value $t$ such that $g^{-1}(t)$ has measure zero.
\end{lemma}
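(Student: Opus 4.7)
The plan is to do a pointwise analysis, splitting $\Omega$ into the null set where convergence fails, the $t$-level set (which is negligible by the regularity of $t$), and its complement, where the claim will follow from elementary continuity of the indicator of $(t,\infty)$ outside its boundary point.

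More precisely, first I would fix a regular value $t \geq 0$ and set $N := \{x \in \Omega : g_n(x) \not\to g(x)\}$ and $M := \{x \in \Omega : |g(x)| = t\}$. By hypothesis $|N| = 0$, and since $M \subset g^{-1}(t) \cup g^{-1}(-t)$, the assumption that $g^{-1}(\pm t)$ has measure zero (understood, for the composition with $|\cdot|$, as $\{|g|=t\}$ being null) yields $|M| = 0$. It therefore suffices to verify the convergence on $\Omega \setminus (N \cup M)$.

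Next I would fix $x \in \Omega \setminus (N \cup M)$ and consider two cases. If $|g(x)| > t$, then since $|g_n(x)| \to |g(x)|$ (continuity of the absolute value), there exists $n_0$ with $|g_n(x)| > t$ for all $n \geq n_0$, so $\mathds{1}_{\{t < |g_n|\}}(x) = 1 = \mathds{1}_{\{t < |g|\}}(x)$ eventually. If $|g(x)| < t$, the same argument gives $|g_n(x)| < t$ eventually, hence $\mathds{1}_{\{t < |g_n|\}}(x) = 0 = \mathds{1}_{\{t < |g|\}}(x)$ for $n$ large enough. The remaining case $|g(x)| = t$ is excluded by $x \notin M$, so pointwise convergence holds off a null set and the claim follows.

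There is no genuine obstacle here: the lemma is essentially the standard fact that $\mathds{1}_{(t,\infty)}$ is continuous at every point of $\R \setminus \{t\}$, transported along the a.e.\ convergence $|g_n| \to |g|$. The only place requiring a moment of care is confirming that the regularity of $t$ for $g$ also makes the level set $\{|g|=t\}$ null, which is immediate from $\{|g|=t\} = g^{-1}(t) \cup g^{-1}(-t)$ (with the convention that, for $t \geq 0$, regularity is assumed at both $\pm t$; for $t=0$ nothing extra is needed).
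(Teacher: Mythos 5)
Your proof is correct and is the natural pointwise argument; the paper itself states Lemma~\ref{lem:ae} as an elementary fact without supplying a proof, so there is no alternative to compare it against. Your observation that the definition of ``regular value'' as $g^{-1}(t)$ being null should really be read as $\{\lvert g\rvert = t\}$ being null (equivalently, that both $g^{-1}(t)$ and $g^{-1}(-t)$ are null when $t>0$) is a valid and worthwhile clarification, since that is exactly what the pointwise case analysis uses.
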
 
\begin{lemma} \label{lem:TM1}
 Suppose $w_n\to w$ in $L^1(\Omega)$, $v_n,v\in L^\infty(\Omega)$, and $v_n\to v$ a.e. in $\Omega$. Then $w_n v_n \to  w v$ in $L^1(\Omega)$.
\end{lemma}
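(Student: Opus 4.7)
The plan is to split the difference $w_nv_n - wv$ in the usual telescoping way
\[
\nr{w_nv_n-wv}_{L^1(\Omega)}\le \nr{(w_n-w)v_n}_{L^1(\Omega)}+\nr{w(v_n-v)}_{L^1(\Omega)}
\]
and control each term separately. Implicit in the hypothesis (and what makes the statement meaningful for the uses in the paper, where the $v_n$ are typically truncations at some fixed level) is that $\{v_n\}$ is uniformly bounded in $L^\infty(\Omega)$; I would either record this as $M:=\sup_n\nr{v_n}_{L^\infty(\Omega)}<\infty$ or, in the absence of a direct bound, derive it from the Banach--Steinhaus argument applied to $v_n$ as linear functionals on $L^1(\Omega)$.

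For the first term I would estimate
\[
\nr{(w_n-w)v_n}_{L^1(\Omega)}\le M\,\nr{w_n-w}_{L^1(\Omega)}\xrightarrow[n\to\infty]{}0
\]
by the assumed $L^1$-convergence of $w_n$ to $w$. For the second term I would note that $w(v_n-v)\to 0$ almost everywhere in $\Omega$ by the pointwise convergence $v_n\to v$, and that the pointwise domination $|w(v_n-v)|\le 2M|w|$ holds with $2M|w|\in L^1(\Omega)$ since $w\in L^1(\Omega)$. The Lebesgue dominated convergence theorem then yields $\nr{w(v_n-v)}_{L^1(\Omega)}\to 0$.

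Combining the two limits finishes the proof. The only genuine issue is the uniform $L^\infty$-bound on $v_n$; once this is in place, both halves of the telescoping estimate are routine. If, on the other hand, one insists on working without any a priori uniform bound, the statement can still be recovered by an Egorov argument: for fixed $\e>0$ choose a set $E_\e\subset\Omega$ of small measure on which $v_n\to v$ uniformly off $E_\e$, split the integrals accordingly, and use the equi-integrability of $|w|$ together with the $L^1$-convergence of $w_n$ to $w$ to absorb the contribution coming from $E_\e$.
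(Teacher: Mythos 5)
The paper states this lemma without proof, and your main argument (telescoping, $L^\infty$ bound on $(w_n-w)v_n$, dominated convergence on $w(v_n-v)$) is the standard and surely intended one. You are also right to flag that the conclusion is false without a uniform bound $M:=\sup_n\|v_n\|_{L^\infty(\Omega)}<\infty$: take $\Omega=(0,1)$, $w_n=w\equiv 1$, $v_n=n\,\mathds{1}_{(0,1/n)}$, $v=0$; all stated hypotheses hold yet $\|w_nv_n-wv\|_{L^1(\Omega)}=1$ for every $n$. However, neither of your fallbacks actually recovers this bound from the hypotheses as written. Banach--Steinhaus would need pointwise boundedness of the functionals $f\mapsto\int_\Omega fv_n\,dx$ on $L^1(\Omega)$, and a.e.\ convergence of $v_n$ does not provide it: in the same example, $f(x)=x^{-1/2}\in L^1(0,1)$ gives $\int_\Omega fv_n\,dx=2\sqrt{n}\to\infty$. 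The Egorov route likewise does not dispense with the bound, since on the exceptional set $E_\varepsilon$ the quantity $\int_{E_\varepsilon}|w_nv_n|\,dx$ cannot be controlled by equi-integrability of $\{w_n\}$ alone once $\|v_n\|_{L^\infty(\Omega)}$ is allowed to blow up. So the lemma must be read with the implicit hypothesis $\sup_n\|v_n\|_{L^\infty(\Omega)}<\infty$ (which is automatic in the paper's applications, where the $v_n$ are truncations or cut-offs at a fixed level); once that is granted, your two-term estimate is correct and complete.
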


\subsection{Generalized Orlicz functions}
Essentially the fairly general framework we employ comes from the monograph~\cite{hahabook}. For other recent developments within the closely related functional settings see also~\cite{yags,bahaha,CGWKSG,CUH,haju}.

A real-valued function is $L$-almost increasing, $L\geq 1$, if $Lf(s) \geq f(t)$ for $s > t$. $L$-almost decreasing is defined analogously.

\begin{definition}  We say that $\vp:\Omega\times\rp0\to[0,\infty]$ is a convex $\Phi$--function, and write $\vp\in\Phi_c(\Omega)$, if the following conditions
hold:
\begin{itemize}
\item[(i)]  For every $s\in\rp0$ the function $x\mapsto\vp(x, t)$ is
measurable and for a.e. $x\in\Omega$ the function $s\mapsto\vp(x, t)$ is increasing, convex, and left-continuous.
\item[(ii)] $\vp(x, 0) = \lim_{s\to 0^+} \vp(x, s) = 0$ and $\lim_{s\to \infty} \vp(x, s) =
\infty$ for a.e. $x\in\Omega$.
\end{itemize}
\end{definition}
\noindent Further, we say that $\vp\in\Phi_c(\Omega)$ satisfies\begin{itemize}
\item[(aInc)$_p$] if there exists $L\geq 1$ such that $s\mapsto\vp(x, s)/s^p$ is $L$-almost increasing in $\rp0$ for every $x\in\Omega$, 
\item[(aDec)$_q$] if there exists $L\geq 1$ such that $s\mapsto\vp(x, s)/s^q$ is $L$-almost decreasing in $\rp0$ for every $x\in\Omega$.
\end{itemize}
We write (aInc), if there exist $p > 1$ such that (aInc)$_p$ holds and (aDec) if there exist $q > 1$ such that (aDec)$_q$ holds. The corresponding conditions with $L = 1$ are denoted by (Inc) or (Dec). \\
We shall consider those $\vp\in\Phi_c(\Omega)$, which satisfy the following set of conditions.
\begin{itemize}
\item[(A0)] There exists $\beta_0\in (0, 1]$ such that $\vp(x, \beta_0) \leq 1$ and
$\vp(x, 1/\beta_0) \geq 1$ for all $x\in\Omega$.
\item[(A1)] There exists $\beta_1\in(0,1)$, such that for every ball $B$ with $|B|\leq 1$ it holds that \[\beta_1\vp^{-1}(x,s)\leq\vp^{-1}(y,s)\quad\text{for every $s\in [1,1/|B|]$ and a.e. $x,y\in B\cap\Omega$}.\]
\item[(A2)] For every $s$ there exists $\beta_2\in(0,1]$ and $h\in L^1(\Omega)\cap L^\infty(\Omega)$, such that  \[\vp(x,\beta_2 r)\leq\vp(y,r) +h(x)+h(y)\quad\text{for a.e. $x,y\in \Omega$ whenever $\vp(y,r)\in[0,s]$}.\]
\end{itemize}
Condition (A0) is imposed in order to exclude degeneracy, while (A1) can be interpreted as local continuity. Fundamental role is played also by (A2) which imposes balance of the growth as it is equivalent to existence $\vp_\infty\in\Phi_c$, $h\in L^1(\Omega)\cap L^\infty(\Omega)$, $\beta_2'\in(0,1]$, $s>0$, such that 
for a.e. $x\in \Omega$, $\vp_\infty(r)\in[0,s]$, and $\vp(y,r)\in[0,s]$ it holds that
\[\vp(x,\beta_2' r)\leq\vp_\infty(r) +h(x)\quad\text{and}\quad \vp_\infty(\beta_2' r)\leq\vp(x,r) +h(x).\]
 
We say that a function $\vp$ satisfies $\Delta_2$-condition (and write $\vp\in\Delta_2$) if there exists a~constant $c>0$, such that for every $s\geq 0$ it holds $\vp(x,2s)\leq c(\vp(x,s)+1)$.  When a function $\vp\in \Phi_c(\Omega)$ satisfies {\rm (aInc)$_p$} and {\rm (aDec)$_q$}, then it is comparable to some $\psi\in\Delta_2$.

\emph{The Young conjugate} of $\vp\in\Phi_c(\Omega)$  is the function $\wt\vp:\Omega\times\rp0\to[0,\infty]$
defined as
$$\wt \vp (x,s) = \sup\{r \cdot s - \vp(x,r):\ r \in \rp0\}.
$$
Note that Young conjugation is involute, i.e. $\wt{(\wt \vp )}=\vp$. Moreover, if $\vp\in\Phi_c(\Omega)$, then $\wt\vp\in\Phi_c(\Omega)$. 
If $\wt\vp\in\Delta_2,$ we say that $\vp$ satisfies $\nabla_2$-condition and denote it by $\vp\in\nabla_2$. If  $\vp,\wt\vp\in\Delta_2$, then we call $\vp$ a doubling function. If $\vp\in \Phi_c(\Omega)$ satisfies {\rm (aInc)$_p$} and {\rm (aDec)$_q$}, $\wt\vp$ is comparable to some $\wt\psi\in\Delta_2$, so we can assume that functions within our framework are doubling.

For $\vp\in\Phi_c(\Omega)$, the following inequality of Fenchel--Young type holds true
\begin{equation}
\label{in:FY}
rs\leq \vp(x,r)+\wt\vp(x,s).
\end{equation}
In fact, within our framework with, since $\vp$ is comparable to a doubling function there exist some constants depending only on $\vp$ for which we have
\begin{equation}
\label{doubl-star}\wt\vp\left(x, {\vp(x,s)}/{s}\right)\sim \vp(x,s) \quad\text{for a.e. }\ x\in\Omega\ \text{ and all }\ s>0.
\end{equation} 

\subsection{Function spaces}\label{ssec:spaces} We always deal with spaces generated by $\vp\in\Phi_c(\Omega)$ satisfying (aInc)$_p$, (aDec)$_q$, (A0), (A1), and (A2). For $f\in L^0(\Omega)$ we define {\em the modular} $\vr_{\vp(\cdot),\Omega}$   by
\[\vr_{\vp(\cdot),\Omega} (f)=\int_\Omega\vp (x, | f(x)|) dx.\]
When it is clear from the context we omit assigning the domain.

\noindent Musielak--Orlicz
space is defined as the set
\[L^{\vp(\cdot)} (\Omega)= \{f \in L^0(\Omega): \lim_{\lambda\to 0^+}\vr_\vp(\lambda f) = 0\}\]
endowed with the Luxemburg norm
\[\|f\|_{\vp(\cdot)}=\inf \{\lambda > 0 : \vr_{\vp(\cdot)} \left(\tfrac 1\lambda f\right) \leq 1\} .\]
For $\vp\in\Phi_c(\Omega)$, the space $L^{\vp(\cdot)}(\Omega)$ is a Banach space~\cite[Theorem~2.3.13]{hahabook}. Moreover, the following H\"older inequality holds true\begin{equation}
\label{in:Hold}\|fg\|_{L^1(\Omega)}\leq 2\|f\|_{L^{\vp(\cdot)}(\Omega)}\|g\|_{L^{\wt\vp(\cdot)}(\Omega)}.
\end{equation}
Sometimes it would be convenient for us to denote vector-valued functions integrable with the modular as $(L^{\vp(\cdot)}(\Omega))^n$. Since there is no difference between claiming $H=(H_1,\dots,H_n)\in (L^{\vp(\cdot)}(\Omega))^n$ and $|H|\in L^{\vp(\cdot)}(\Omega)$, we are not very careful with stressing it in the sequel. A function $f\in L^{\vp(\cdot)} (\Omega)$ belongs to {\em Musielak-Orlicz-Sobolev space} $W^{1,\vp(\cdot)} (\Omega)$, if its distributional partial derivatives $\partial_1 f,\dots,\partial_n f$ exist and belong to $L^{\vp(\cdot)} (\Omega)$ too. Because of the growth conditions $W^{1,\vp(\cdot)}(\Omega)$ is a separable and reflexive space. Moreover, smooth functions are dense there. As a zero-trace space $W_0^{1,\vp(\cdot)}(\Omega)$ we mean the closure of $C_0^\infty(\Omega)$ in $W^{1,\vp(\cdot)}(\Omega)$. In fact, due to~\cite[Theorem~6.2.8]{hahabook} given a~bounded domain $\Omega$ there exists a~constant $c=c(n,\Omega)>0,$ such that for any $u\in W_0^{1,\vp(\cdot)}(\Omega)$ it holds that
\begin{equation}
\|u\|_{L^{\vp(\cdot)}(\Omega)}\leq c \|\nabla u\|_{L^{\vp(\cdot)}(\Omega)}.\label{in:Sob}
\end{equation}
Moreover, \cite[Theorem 6.3.7]{hahabook} yields that\begin{equation}
\label{emb} W^{1,\vp(\cdot)}_0(\Omega)\hookrightarrow\hookrightarrow L^{\vp(\cdot)}(\Omega),
\end{equation}
where `$\hookrightarrow\hookrightarrow$' stands for a compact embedding.
   

\begin{remark}\cite{hahabook}\label{rem:top} If  $\vp\in \Phi_c(\Omega)$ satisfies {\rm (aInc)$_p$}, {\rm (aDec)$_q$}, (A0), (A1), (A2), strong (norm) topology of $W^{1,\vp(\cdot)}(\Omega)$ coincides with the sequensional modular topology. Moreover, smooth functions are dense in this space in both topologies. Thus $W_0^{1,\vp(\cdot)}(\Omega)$, under our assumptions, is a closure of $C_0^\infty(\Omega)$ with respect to the modular topology of gradients in $L^{\vp(\cdot)}(\Omega)$.
\end{remark}
 
 
\noindent Space $(W^{1,\vp(\cdot)}_0(\Omega))'$ is considered endowed with the norm
\[\| H\|_{(W^{1,\vp(\cdot)}_0(\Omega))'} =\sup\left\{\frac{\langle H, v\rangle}{\| v\|_{W^{1,\vp(\cdot)}(\Omega)}}:\quad  v\in W^{1,\vp(\cdot)}_0(\Omega)\right\}.
\]

\subsection{The operator} Let us motivate that the growth and coercivity conditions from $(\A 1)$-$(\A 4)$ imply the expected proper definition of the operator involved in problem~\eqref{eq:main}. We consider the operator $\ \oppA : W^{1,\vp(\cdot)}_0(\Omega) \to (W^{1,\vp(\cdot)}_0(\Omega))'\ $ defined as
\[\oppA(v)=-\dv\A(x,\nabla v),\]
that is acting 
\begin{flalign}\label{oppA}
\langle\oppA( v),w\rangle:=\int_{\Omega}\A(x,\nabla v)\cdot \nabla w  \,dx\quad \text{for}\quad w\in C^{\infty}_{0}(\Omega),
\end{flalign}
where $\langle \cdot, \cdot \rangle$ denotes dual pairing between reflexive Banach spaces $W^{1,\vp(\cdot)}(\Omega))$ and $(W^{1,\vp(\cdot)}(\Omega))'$ is well-defined. Note that when $v\in W^{1,\vp(\cdot)}(\Omega)$ and $w\in C_0^\infty(\Omega)$, growth condition $(\A 2)$, H\"older's inequality~\eqref{in:Hold}, equivalence~\eqref{doubl-star}  justify that
\begin{flalign}
\nonumber\snr{\langle \oppA (v),w \rangle}\le &\, c\int_{\Omega}\frac{\vp(x,\snr{\nabla v})}{\snr{\nabla v}}\snr{\nabla w} \ dx \le c\left \| \frac{\vp(\cdot,\snr{\nabla v})}{\snr{\nabla v}}\right \|_{L^{\wt \vp(\cdot)}(\Omega)}\nr{\nabla w}_{L^{\vp(\cdot)}(\Omega)}\nonumber \\
\le &\, c\nr{\nabla v}_{L^{ \vp(\cdot)}(\Omega)}\nr{\nabla w}_{L^{\vp(\cdot)}(\Omega)}\le c\nr{w}_{W^{1,\vp(\cdot)}(\Omega)}.\label{op}
\end{flalign}
By density argument, the operator is well-defined on  $W^{1,\vp(\cdot)}_0(\Omega)$.

\medskip

What is more,  by ($\A$1)--($\A$2) and \cite[Lemma~4.12]{CZG-gOp} we have the following.

\begin{remark} For $u\in\mathcal{T}^{1,\vp(\cdot)}(\Omega)$, such that for some $M,k_0>0$ it holds that $\vr_{\vp(\cdot),\Omega}(\nabla T_k  u)\leq Mk$ for all $k>k_0$, there exists a continuous function $\zeta:[0,|\Omega|]\to\rp0$, such that $\lim_{s\to 0^+}\zeta(s)=0$ and for any measurable $E\subset\Omega$\[\int_E |\A(x,\nabla u)|\,dx\leq\zeta(|E|),\]
 where `$\nabla$' is understood as in~\eqref{gengrad}. In particular, $\A(\cdot,\nabla u)\in L^1(\Omega)$.\label{rem:A-int}
\end{remark}

\subsection{Capacities}\label{ssec:cap} Understanding capacities is needed to describe pointwise behavior of Sobolev functions. We employ the generalization of classical notions of capacities, cf.~\cite{AH,hekima,Re}, as well as unconventional ones~\cite{ks,MSZ} to the Musielak-Orlicz-Sobolev setting according to~\cite{bahaha,haju}. 

For a set $E\subset\rn$ we define
\[ {S}_{1,\vp(\cdot)}(E):=\{0\leq \phi\in W^{1,\vp(\cdot)}(\rn):\ \phi\geq 1\ \text{in an open set containing } E\}\]
and its generalized Orlicz capacity of Sobolev type (called later {\em $W^{1,\vp(\cdot)}$--capacity}) by \[\capP (E)=\inf_{\phi\in  {S}_{1,\vp(\cdot)}(E)}\left\{
\int_\rn \vp(x, \phi)+\vp(x,|\nabla \phi|)\,dx\right\}
.\]



We shall consider {\em generalized relative $\vp(\cdot)$-capacity} $\CapP$. With this aim for every $K$ compact in $\Omega\subset\rn$ let us denote\begin{equation}
\label{def:R}
\mathcal{R}_{\vp(\cdot)}(K,\Omega) := \{v\in W^{1,\vp(\cdot)}(\Omega)\cap C_0(\Omega):\quad v\geq 1\ \text{ on $K$ and }\ v \geq 0\}
\end{equation}
and set
\[\CapP(K,\Omega) :=\inf\left\{ \vr_{\vp(\cdot),K}(\phi):\ \ \phi\in \mathcal{R}_{\vp(\cdot)}(K,\Omega )\right\} .\]
For open  sets $A\subset\Omega$ we define\[\CapP (A,\Omega)=\sup\left\{\CapP(K,\Omega):\quad K\subset A \text{ and } K \text{ is compact in }A\right\} \]
and finally, if $E\subset\Omega$ is an arbitrary set \[\CapP (E,\Omega)=\inf\left\{\CapP(A,\Omega):\quad E\subset A \text{ and } A \text{ is open in }\Omega\right\}. \]
This notion of capacity enjoys all fundamental properties of classical capacities~\cite{bahaha,haju}.

Let us pay some attention to sets of zero capacity. If $B_R$ is a ball in $\rn$, $E\subset B_R$ and $\CapP(E,B_R)=0$, then $|E|=0$. Having bounded $\Omega\subset\rn$ for a set $E\subset\Omega$ we have $\CapP(E,\Omega)=0$ if and only if $\capP(E)=0$. What is more, each set of $W^{1,\vp(\cdot)}$-capacity zero is contained in a Borel set of $W^{1,\vp(\cdot)}$-capacity zero. Countable union of sets of $W^{1,\vp(\cdot)}$-capacity zero has $W^{1,\vp(\cdot)}$--capacity zero.

Function $u$ is called {\em $\capP$--quasicontinuous} if for every $\e>0$ there exists an open set $U$ with $\capP (U)<\e$, such that $f$ restricted to $\Omega\setminus U$ is continuous. We say that a~claim holds {\em $\vp(\cdot)$--quasieverywhere} if it holds outside a set of Sobolev $\vp(\cdot)$--capacity zero.  A set $E\subset \Omega$ is said to be {\em $\capP$-quasiopen} if for every $\ve>0$ there exists a decreasing an open set $U$ such that $E\subset U\subset \Omega$ and $\capP(U\setminus E)\leq\ve$.

\begin{lemma}\label{lem:almost-uniform} For every Cauchy
sequence in $W^{1,\vp(\cdot)} (\Omega)$ (equivalently under our regime, with respect to the $W^{1,\vp(\cdot)} (\Omega)$--modular topology) of functions from $C(\rn)\cap W^{1,\vp(\cdot)} (\Omega)$ there is a subsequence which converges pointwise $\capP$-quasieverywhere in $\Omega$. Moreover, the convergence is uniform outside a set of arbitrary small capacity $\capP$.
\end{lemma}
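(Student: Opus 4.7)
The plan is to follow the classical template from capacity theory (Evans--Gariepy, Heinonen--Kilpel\"ainen--Martio) adapted to the generalized Orlicz setting: pass to a rapidly convergent subsequence, estimate the capacity of the ``bad'' level sets where consecutive terms differ too much, and use subadditivity.

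\textbf{Step 1: rapid subsequence.} Given a Cauchy sequence $\{u_j\}\subset C(\rn)\cap W^{1,\vp(\cdot)}(\Omega)$, I extract a subsequence (still denoted $\{u_k\}$) for which $\|u_{k+1}-u_k\|_{W^{1,\vp(\cdot)}(\Omega)}\le 2^{-\alpha k}$ with $\alpha$ large enough that the modular of $2^{k}(u_{k+1}-u_k)$ becomes summable in $k$. Concretely, using (aDec)$_q$ together with the Luxemburg norm characterization $\vr_{\vp(\cdot)}(f/\|f\|_{\vp(\cdot)})\le1$, one has
\[
\vr_{\vp(\cdot),\Omega}\bigl(2^{k}|u_{k+1}-u_k|\bigr)\le L\,(2^{k}\|u_{k+1}-u_k\|_{\vp(\cdot)})^{q},
\]
so picking $\alpha>q+1$ yields $\sum_{k}\vr_{\vp(\cdot),\Omega}(2^{k}|u_{k+1}-u_k|)<\infty$, and analogously for the gradients.

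\textbf{Step 2: capacity of the ``bad'' sets.} Define
\[
E_k:=\{x\in\Omega:\ |u_{k+1}(x)-u_k(x)|>2^{-k}\}.
\]
Because the $u_k$ are continuous, $E_k$ is open and $2^{k}|u_{k+1}-u_k|\ge1$ on a neighborhood of $E_k$; hence $2^{k}(u_{k+1}-u_k)$ (extended to $\rn$ after a standard cut-off, using the continuity and $W^{1,\vp(\cdot)}(\rn)$ membership) lies in ${S}_{1,\vp(\cdot)}(E_k)$, so by the very definition of $\capP$
\[
\capP(E_k)\le \int_{\rn}\vp\bigl(x,2^{k}|u_{k+1}-u_k|\bigr)+\vp\bigl(x,2^{k}|\nabla(u_{k+1}-u_k)|\bigr)\,dx,
\]
which by Step~1 is bounded by $C\,2^{-(\alpha-q)k}$ and therefore summable.

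\textbf{Step 3: exceptional sets and conclusion.} Set $F_N:=\bigcup_{k\ge N}E_k$. By countable subadditivity of $\capP$ (recalled in Section~\ref{ssec:cap}),
\[
\capP(F_N)\le\sum_{k\ge N}\capP(E_k)\xrightarrow[N\to\infty]{}0.
\]
On $\Omega\setminus F_N$ every $k\ge N$ satisfies $|u_{k+1}(x)-u_k(x)|\le2^{-k}$, whence telescoping gives $|u_{k+m}(x)-u_k(x)|\le2^{-k+1}$ for all $m\ge0$; thus $\{u_k\}$ is uniformly Cauchy on $\Omega\setminus F_N$, hence uniformly convergent there. The set on which convergence fails is contained in $\bigcap_N F_N$, which has $\capP$-capacity zero; therefore the subsequence converges $\capP$-quasi-everywhere, and the convergence is uniform outside the set $F_N$ of arbitrarily small capacity.

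\textbf{Main obstacle.} The only non-routine point is the passage from the norm estimate $\|u_{k+1}-u_k\|_{W^{1,\vp(\cdot)}}\le 2^{-\alpha k}$ to the modular estimate needed for the admissibility computation of $\capP(E_k)$, since $\vp$ is not homogeneous. This is precisely where (aDec)$_q$ (and the doubling property of $\vp$ it implies via the discussion in Section~\ref{sec:prelim}) is indispensable: it converts the Luxemburg norm bound into a polynomial bound on the modular, which is what makes $\sum_k\capP(E_k)$ finite. Everything else---continuity of $u_k$ guaranteeing openness of $E_k$, countable subadditivity, and the telescoping uniform-Cauchy argument---is standard once this modular estimate is secured.
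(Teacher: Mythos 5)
Your overall strategy is the standard one (rapidly convergent subsequence, capacity bound on the level sets $E_k$, countable subadditivity, telescoping), which is exactly the route the literature the paper leans on takes; the paper itself offers no proof of this lemma, so the comparison is with that classical template. Two points in your write-up need repair, one cosmetic and one substantive. The cosmetic one: the displayed modular--norm inequality is wrong as stated. When you rescale by the factor $2^k\|u_{k+1}-u_k\|_{\vp(\cdot)}\le 1$ (a \emph{down}scaling of the argument of $\vp$), the upper bound on the modular comes from (aInc)$_p$, not (aDec)$_q$: one gets $\vr_{\vp(\cdot),\Omega}\bigl(2^{k}|u_{k+1}-u_k|\bigr)\le L\,(2^{k}\|u_{k+1}-u_k\|_{\vp(\cdot)})^{p}$, whereas (aDec)$_q$ only yields a lower bound in this direction (for $\vp(x,s)=s^p$ with $p<q$ your inequality fails). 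Since any polynomial power of $2^{-(\alpha-1)k}$ is summable, the conclusion of Step 1 survives with the exponent $p$; just cite the right condition.

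The substantive gap is the admissibility claim in Step 2. The capacity $\capP$ is defined through test functions in $W^{1,\vp(\cdot)}(\rn)$ with the modular taken over all of $\rn$, while the hypothesis only gives $u_{k+1}-u_k\in C(\rn)\cap W^{1,\vp(\cdot)}(\Omega)$; your parenthetical appeal to ``$W^{1,\vp(\cdot)}(\rn)$ membership'' assumes precisely what is not granted, and a cut-off does not create weak differentiability across $\partial\Omega$. So $2^k|u_{k+1}-u_k|$ is not, as written, an element of $S_{1,\vp(\cdot)}(E_k)$, and the inequality $\capP(E_k)\le\vr_{\vp(\cdot),\Omega}(2^k|u_{k+1}-u_k|)+\vr_{\vp(\cdot),\Omega}(2^k|\nabla(u_{k+1}-u_k)|)$ is unjustified. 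You need a bridge: either run the whole argument for the relative capacity $\CapP(\cdot,\Omega)$ (where $W^{1,\vp(\cdot)}(\Omega)$-functions, after a cut-off making them belong to $C_0(\Omega)$, are legitimate competitors) and then transfer to $\capP$ --- which the paper's stated equivalence handles only at the level of null sets, so the ``uniform convergence outside a set of arbitrarily small $\capP$'' part still requires a quantitative comparison or a localization/exhaustion argument --- or invoke an extension operator (available for Lipschitz $\Omega$, as in the paper's main theorem) to reduce to a Cauchy sequence in $W^{1,\vp(\cdot)}(\rn)$, which is the setting in which the cited capacity references prove the statement. Without one of these, Step 2, and hence the quantitative half of the lemma, is not established.
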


In the sequel we shall always identify $u$ with its $\capP$-quasicontinuous representative. 


\begin{lemma}\label{lem:sola-qc}For each $u\in\mathcal{T}^{1,\vp(\cdot)}_0(\Omega)$  there exists a unique $\capP$--quasicontinuous function $v \in \mathcal{T}^{1,\vp(\cdot)}_0(\Omega)$ such that $u=v$ almost everywhere in $\Omega$.
\end{lemma}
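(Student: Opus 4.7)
The strategy is to build $v$ by patching together $\capP$-quasicontinuous representatives of the truncations $T_k(u)$, which are honest elements of $W^{1,\vp(\cdot)}_0(\Omega)$.

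First, for each integer $k\geq 1$, observe that $T_k(u)\in W^{1,\vp(\cdot)}_0(\Omega)$ by definition of $\mathcal{T}^{1,\vp(\cdot)}_0(\Omega)$. By Remark~\ref{rem:top} we may approximate $T_k(u)$ in the modular (equivalently norm) topology by a sequence from $C_0^\infty(\Omega)\subset C(\rn)\cap W^{1,\vp(\cdot)}(\Omega)$; Lemma~\ref{lem:almost-uniform} then yields a $\capP$-quasicontinuous function $v_k$ with $v_k=T_k(u)$ almost everywhere. The key compatibility identity is that for $k<m$ the composition $T_k\circ v_m$ is itself $\capP$-quasicontinuous (composition of the continuous map $T_k$ with a q.c.\ function) and equals $T_k(T_m(u))=T_k(u)=v_k$ almost everywhere; since two $\capP$-quasicontinuous functions that coincide a.e.\ must coincide $\capP$-quasieverywhere (a direct application of Lemma~\ref{lem:almost-uniform} to their difference), we deduce
\[
T_k(v_m)=v_k\quad\capP\text{-quasieverywhere, for all }1\leq k<m.
\]

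Let $N\subset\Omega$ collect the countably many capacity-zero exceptional sets above; then $\capP(N)=0$. For $x\in\Omega\setminus N$ the identity $v_k(x)=T_k(v_m(x))$ forces the sequence $\{v_k(x)\}_k$ either to stabilize---once $|v_{k_0}(x)|<k_0$ for some $k_0$, the relation yields $v_m(x)=v_{k_0}(x)$ for all $m\ge k_0$---or to satisfy $v_k(x)=\pm k$ for every $k$ with a fixed sign, so that
\[
v(x):=\lim_{k\to\infty}v_k(x)\in[-\infty,+\infty]
\]
is well defined. Since $u$ is finite almost everywhere and $v_k=u$ a.e.\ on $\{|u|<k\}$, stabilization occurs a.e., whence $v=u$ a.e., so $T_t(v)=T_t(u)\in W^{1,\vp(\cdot)}_0(\Omega)$ for every $t>0$ and $v\in\mathcal{T}^{1,\vp(\cdot)}_0(\Omega)$.

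For the quasicontinuity of $v$, fix $\varepsilon>0$ and select open sets $W_k$ with $\capP(W_k)<\varepsilon 2^{-k}$ on whose complement each $v_k$ is continuous; put $U:=N\cup\bigcup_k W_k$ and note $\capP(U)\le\varepsilon$ by subadditivity. On $\Omega\setminus U$ every $v_k$ is continuous, the open subsets $G_k:=\{|v_k|<k\}\cap(\Omega\setminus U)$ form an increasing exhaustion of $\{v\in\R\}\cap(\Omega\setminus U)$, and on each $G_k$ we have $v=v_k$, giving local continuity; at remaining points, where $v=\pm\infty$, the identity $v_k=k\,\sgn(v)$ delivers continuity in the extended topology of $[-\infty,+\infty]$. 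Uniqueness follows from the same principle: if $v,w$ are both $\capP$-quasicontinuous representatives of $u$, then for every $k$ the truncations $T_k(v)$ and $T_k(w)$ are $\capP$-quasicontinuous and equal a.e., hence equal $\capP$-quasieverywhere, and letting $k\to\infty$ yields $v=w$ $\capP$-quasieverywhere. The principal obstacle is precisely the pasting step---ensuring that the a.e.\ limit of a compatible family of q.c.\ truncations is itself quasicontinuous on all of $\Omega$, including the locus where $v=\pm\infty$; the compatibility identity $T_k(v_m)=v_k$ q.e.\ is the essential glue that makes the construction coherent.
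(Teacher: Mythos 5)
Your strategy---construct $v$ by patching together $\capP$-quasicontinuous representatives $v_k$ of the truncations $T_k(u)\in W^{1,\vp(\cdot)}_0(\Omega)$, verify the compatibility identity $T_k(v_m)=v_k$ q.e., and pass to the pointwise limit---is the standard and correct one; the paper states the lemma without proof, and this is essentially the argument used in the literature (cf.~\cite{BBGGPV,DMMOP1}) transported to the present setting.

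There is, however, one imprecision worth flagging. The auxiliary principle you lean on twice---``two $\capP$-quasicontinuous functions that coincide a.e.\ coincide $\capP$-quasieverywhere''---is true for quasicontinuous representatives of elements of $W^{1,\vp(\cdot)}_0(\Omega)$, but it is \emph{not} a direct application of Lemma~\ref{lem:almost-uniform} to their difference. That lemma produces q.e.\ convergence of a subsequence of a Cauchy sequence of continuous functions; it does not by itself identify an \emph{arbitrary} quasicontinuous representative with the q.e.\ limit of its smooth approximants (the naive attempt to deduce this is circular, since it already presupposes uniqueness of the q.c.\ representative). This uniqueness is a genuine separate fact of capacity theory, classically proved via a capacitary weak-type estimate or a ``q.c.\ $\geq 0$ a.e.\ $\Rightarrow\geq 0$ q.e.'' argument, as in \cite[Theorem~4.12]{hekima}; in the generalized Orlicz framework it is available in \cite{bahaha,haju}. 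In the existence part of your proof you could sidestep the issue by noting that $v_k$ and $T_k(v_m)$ are both obtained as q.e.\ limits of explicit Cauchy sequences in $C(\rn)\cap W^{1,\vp(\cdot)}(\Omega)$ converging to the \emph{same} element $T_k(u)$, and then interleaving the two sequences before invoking Lemma~\ref{lem:almost-uniform}; but in the uniqueness part, where $v$ and $w$ are arbitrary quasicontinuous representatives, you genuinely need the cited uniqueness-of-q.c.-representatives result and should reference it rather than Lemma~\ref{lem:almost-uniform}. A minor secondary remark: your $v$ is a priori $[-\infty,+\infty]$-valued and quasicontinuous in the extended topology, which you acknowledge; this is consistent with the definition of $\mathcal{T}^{1,\vp(\cdot)}_0(\Omega)$ and with the way the lemma is used later (Lemma~\ref{lem:qc-open}), so no repair is needed there.
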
  


As a direct consequence of Lemma~\ref{lem:sola-qc}, we have the following observations.

\begin{lemma}\label{lem:qc-open} For a $\capP$-quasicontinuous function $u
$ and $k>0$, the sets $\{|u|>k\}$ and $\{|u|<k\}$ are $\capP$-quasi open.\end{lemma}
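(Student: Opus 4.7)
The plan is to unwind the definition of $\capP$-quasicontinuity and combine it with the fact that, for a genuinely continuous function, preimages of open subsets of $\R$ are open. Fix $k>0$ and $\ve>0$. By quasicontinuity of $u$, choose an open set $U_\ve\subset\Omega$ with $\capP(U_\ve)<\ve$ such that the restriction $u|_{\Omega\setminus U_\ve}$ is continuous.

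Since $\{t\in\R:|t|>k\}$ is open in $\R$, continuity of the restriction yields a set $V_\ve$, open in $\Omega$, with
\[
V_\ve\cap(\Omega\setminus U_\ve)=\{|u|>k\}\cap(\Omega\setminus U_\ve).
\]
Now set $W_\ve:=V_\ve\cup U_\ve$. Then $W_\ve$ is open in $\Omega$, and by construction
\[
\{|u|>k\}\subset W_\ve,\qquad W_\ve\setminus\{|u|>k\}\subset U_\ve,
\]
so by monotonicity of the capacity,
\[
\capP\bigl(W_\ve\setminus\{|u|>k\}\bigr)\le\capP(U_\ve)<\ve.
\]
This is exactly the definition of $\{|u|>k\}$ being $\capP$-quasiopen. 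Replacing $\{|t|>k\}$ by the open set $\{|t|<k\}$ in the argument above gives the corresponding statement for $\{|u|<k\}$.

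There is no real obstacle here: the whole content is the standard observation that $\capP$-quasicontinuous functions are continuous on complements of arbitrarily small $\capP$-negligible open sets, and an open set plus a small open set is still open with small symmetric defect measured by $\capP$. The only delicate point worth checking is monotonicity of $\capP$ on arbitrary subsets of $\Omega$, but this is among the fundamental properties of the relative capacity recorded just above in Section~\ref{ssec:cap}.
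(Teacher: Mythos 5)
Your argument is correct and is the standard one: the paper states this lemma without proof (as an immediate consequence of quasicontinuity), and your unwinding of the definitions — continuity of $u$ on $\Omega\setminus U_\ve$, taking $W_\ve=V_\ve\cup U_\ve$, and monotonicity of the capacity — is exactly the argument being left implicit. The only point to keep in mind is that $V_\ve$ is produced by the subspace topology on $\Omega\setminus U_\ve$ (which need not be open), but your formulation already handles this correctly.
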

\begin{lemma}\label{lem:exh}
For every  $\capP$-quasiopen set $U\subset\Omega$ there exists an increasing sequence $\{v_n\}$ of nonnegative functions in $W^{1,\vp(\cdot)}_0(\Omega)$ which converges to $\mathds{1}_U$ $\capP$-quasi everywhere in $\Omega$.
\end{lemma}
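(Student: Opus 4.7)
My plan is a diagonal construction that combines increasing approximations of open supersets of $U$ with small capacity-theoretic corrections.

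First, using the $\capP$-quasiopenness of $U$, I pick open sets $O_k\supset U$ with $\capP(O_k\setminus U)<4^{-k}$, arranged to be decreasing by replacing $O_k$ with $\bigcap_{j\le k} O_j$. For each $k$ I choose an increasing family $\eta_n^{(k)}\in C_0^\infty(\Omega)$ supported in $O_k$, bounded by $1$, with $\eta_n^{(k)}\uparrow\mathds{1}_{O_k}$ pointwise in $\Omega$ as $n\to\infty$ (for instance, mollifications of $\min(n\,\dist(\cdot,\Omega\setminus O_k),1)$); these lie in $W^{1,\vp(\cdot)}_0(\Omega)$. Next, from the definition of $\capP$ I select nonneg $\phi_k\in W^{1,\vp(\cdot)}_0(\Omega)$ with $\phi_k\ge 1$ $\capP$-q.e.\ on $O_k\setminus U$ and $\vr_{\vp(\cdot),\Omega}(\phi_k)+\vr_{\vp(\cdot),\Omega}(\nabla\phi_k)<4^{-k}$. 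Modular smallness together with Remark~\ref{rem:top} forces $\phi_k\to 0$ in $W^{1,\vp(\cdot)}_0(\Omega)$, and after extracting a subsequence Lemma~\ref{lem:almost-uniform} yields $\phi_k\to 0$ $\capP$-quasi everywhere in $\Omega$.

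With these pieces in hand, I define the diagonal sequence
\[
v_n:=\max_{1\le k\le n}(\eta_n^{(k)}-\phi_k)^+.
\]
Each $v_n$ is nonneg and belongs to $W^{1,\vp(\cdot)}_0(\Omega)$ since this class is stable under finite lattice operations and positive parts; moreover $v_n\le v_{n+1}$, because for each fixed $k$ the inner term $(\eta_n^{(k)}-\phi_k)^+$ is monotone increasing in $n$ while the index range of the maximum expands. The pointwise limit is read off case by case. On $\Omega\setminus U$: for each $k\le n$ either $x\notin O_k$ (so $\eta_n^{(k)}(x)=0$) or $x\in O_k\setminus U$ (so $\phi_k(x)\ge 1\ge\eta_n^{(k)}(x)$ $\capP$-q.e.), and in either case the $k$-th contribution vanishes, giving $v_n=0$ $\capP$-q.e. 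On $U\subset\bigcap_k O_k$: for q.e.\ $x\in U$ and any $\delta>0$ I fix $k$ large enough that $\phi_k(x)<\delta$, then $n\ge k$ large enough that $\eta_n^{(k)}(x)>1-\delta$, obtaining $v_n(x)\ge 1-2\delta$; combined with $v_n\le 1$ this produces $v_n(x)\to 1$ q.e.\ on $U$. Thus $v_n\to\mathds{1}_U$ $\capP$-quasi everywhere.

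\textbf{Expected main obstacle.} The crux is the transition from the modular smallness of $\phi_k$ to $\capP$-quasi-everywhere pointwise decay. This is where Lemma~\ref{lem:almost-uniform} and the coincidence of norm and modular topologies from Remark~\ref{rem:top} are indispensable; without these structural features of $W^{1,\vp(\cdot)}_0(\Omega)$ under (aInc)$_p$, (aDec)$_q$, (A0)--(A2), the corrections $\phi_k$ could conceivably remain positive on $U$ and the diagonal values $v_n$ would fail to reach $1$. The remaining ingredients — capacitary existence of the $\phi_k$, open-set exhaustion by the $\eta_n^{(k)}$, and the lattice calculus in the Musielak--Orlicz--Sobolev class — are standard under our assumptions.
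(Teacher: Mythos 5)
The paper does not prove Lemma~\ref{lem:exh} in text (it relies on the capacity theory of~\cite{bahaha,haju}), so I assess your construction on its own merits. The diagonal scheme --- open neighbourhoods $O_k\supset U$ with $\capP(O_k\setminus U)<4^{-k}$, compactly supported approximants $\eta_n^{(k)}\uparrow\mathds{1}_{O_k\cap\Omega}$, capacitary correctors $\phi_k$, and the running maximum $v_n=\max_{k\le n}(\eta_n^{(k)}-\phi_k)^+$ --- is the right idea, and the monotonicity of $\{v_n\}$ and the case-by-case limit analysis are sound.

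The step that is not yet justified is the passage from modular smallness of the $\phi_k$ to $\phi_k\to0$ $\capP$-quasi-everywhere. You invoke Lemma~\ref{lem:almost-uniform}, but as stated it concerns Cauchy sequences drawn from $C(\rn)\cap W^{1,\vp(\cdot)}(\Omega)$, whereas the admissible functions in the definition of $\capP$ are merely elements of $W^{1,\vp(\cdot)}(\rn)$ and need not be continuous. Replacing $\phi_k$ by a norm-close smooth function does not automatically help: after mollification the constraint ``$\ge 1$ on an open set containing $O_k\setminus U$'' may fail near the boundary of that open set, and two functions close in $W^{1,\vp(\cdot)}$-norm need not be pointwise close q.e. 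So you must either (i) show that the infimum defining $\capP$ is unchanged when restricted to smooth (or Lipschitz) admissible functions --- true under the standing assumptions, but it requires a truncation/dilation argument rather than plain mollification --- or (ii) prove a quasicontinuous version of Lemma~\ref{lem:almost-uniform}, e.g.\ through a capacitary Chebyshev bound on $\capP(\{\phi_k>\lambda\})$ plus Borel--Cantelli. Two smaller points: $\phi_k$ lies in $W^{1,\vp(\cdot)}(\rn)$, not $W^{1,\vp(\cdot)}_0(\Omega)$ --- harmless for the conclusion, since $(\eta_n^{(k)}-\phi_k)^+$ inherits the compact support of $\eta_n^{(k)}$ and therefore lands in $W^{1,\vp(\cdot)}_0(\Omega)$, but the membership claim as written is off; and the function $\min(n\,\dist(\cdot,\Omega\setminus O_k),1)$ need not have compact support in $\Omega$ if $O_k$ reaches $\partial\Omega$ (use $\dist(\cdot,\Omega\setminus(O_k\cap\Omega_n))$ with $\Omega_n$ a compact exhaustion of $\Omega$), nor do mollifications of an increasing sequence remain increasing; it is cleaner to work with the Lipschitz functions directly, which lie in $W^{1,\vp(\cdot)}_0(\Omega)$ already.
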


\begin{lemma}\label{lem:summability}
If $\mup\in\MP(\Omega)$ and $u\in W^{1,\vp(\cdot)}_0(\Omega)$, then $\capP$-quasicontinuous representative $\widehat{u}$ of $u$ is measurable with respect to $\mup$. If additionally $u\in L^\infty$, then $\widehat{u}\in L^\infty(\Omega,\mup)\subset L^1(\Omega,\mup).$
\end{lemma}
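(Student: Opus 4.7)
The plan is to exploit the fact that, under the standing assumptions, smooth functions are modularly dense in $W^{1,\vp(\cdot)}_0(\Omega)$ (Remark~\ref{rem:top}), together with the almost-uniform convergence result Lemma~\ref{lem:almost-uniform}, and then transfer the $\capP$-quasi-everywhere statements to $\mup$-a.e. statements via the diffuseness of $\mup$.

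Concretely, for $u\in W^{1,\vp(\cdot)}_0(\Omega)$, first I would pick a sequence $u_n\in C^\infty_0(\Omega)$ converging to $u$ in $W^{1,\vp(\cdot)}(\Omega)$. This sequence is Cauchy and lies in $C(\rn)\cap W^{1,\vp(\cdot)}(\Omega)$, so Lemma~\ref{lem:almost-uniform} produces a subsequence (still denoted $u_n$) converging pointwise $\capP$-quasi everywhere in $\Omega$ to some $\capP$-quasicontinuous limit $v$. Since $u_n\to u$ in $L^{\vp(\cdot)}(\Omega)$, we have $v=u$ almost everywhere, and by the uniqueness in Lemma~\ref{lem:sola-qc} we conclude $v=\widehat{u}$. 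Because $\mup\in\MP(\Omega)$ does not charge sets of $\capP$-capacity zero, the $\capP$-q.e. convergence $u_n\to\widehat{u}$ actually takes place $\mup$-almost everywhere on $\Omega$. Each $u_n$ is continuous, hence Borel (and thus $\mup$-) measurable; the pointwise $\mup$-a.e. limit $\widehat{u}$ is therefore $\mup$-measurable, which proves the first claim.

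For the second claim, suppose in addition $u\in L^\infty(\Omega)$ with $M:=\|u\|_{L^\infty(\Omega)}$. I would then truncate the smooth approximants and consider $T_M u_n$ in place of $u_n$. These functions belong to $C(\rn)\cap W^{1,\vp(\cdot)}(\Omega)$, are uniformly bounded by $M$, and since $T_M$ is $1$-Lipschitz with $T_M u=u$ a.e., the sequence $T_M u_n$ is Cauchy in $W^{1,\vp(\cdot)}(\Omega)$ with limit $u$. Applying Lemma~\ref{lem:almost-uniform} again yields a subsequence converging $\capP$-quasi everywhere to a $\capP$-quasicontinuous representative of $u$, which by Lemma~\ref{lem:sola-qc} must coincide with $\widehat{u}$. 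Passing to the pointwise limit gives $|\widehat{u}|\leq M$ $\capP$-quasi everywhere, hence $|\widehat{u}|\leq M$ $\mup$-a.e. by diffuseness of $\mup$. Thus $\widehat{u}\in L^\infty(\Omega,\mup)$, and because $\mup$ is a bounded measure this automatically embeds into $L^1(\Omega,\mup)$.

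The main obstacle I anticipate is the transfer between $\capP$-quasi everywhere and $\mup$-almost everywhere statements, but this is exactly where the definition of $\MP(\Omega)$ (measures not charging sets of zero $\capP$-capacity) is used, together with the fact—guaranteed by Lemma~\ref{lem:sola-qc}—that the $\capP$-quasicontinuous representative is uniquely determined, so the pointwise limits produced by the approximation procedure are unambiguously identified with $\widehat{u}$.
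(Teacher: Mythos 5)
The paper does not actually prove Lemma~\ref{lem:summability}; it is stated without proof as one of several capacity-theoretic facts drawn from the literature, so there is no in-paper argument to compare against. Your proof of the first claim (measurability of $\widehat u$ with respect to $\mup$) is correct and is the standard one: approximate by $C_0^\infty(\Omega)$ functions, apply Lemma~\ref{lem:almost-uniform} to extract a $\capP$-q.e.\ convergent subsequence with quasicontinuous limit, identify that limit with $\widehat u$ via the uniqueness in Lemma~\ref{lem:sola-qc}, and use diffuseness of $\mup$ to upgrade ``$\capP$-q.e.''\ to ``$\mup$-a.e.''.

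The $L^\infty$ part, however, has a genuine gap. You justify that $\{T_M u_n\}$ is Cauchy in $W^{1,\vp(\cdot)}(\Omega)$ by appealing to the $1$-Lipschitz property of $T_M$ together with $T_M u = u$ a.e. The Lipschitz bound controls only $\|T_M u_n - T_M u_m\|_{L^{\vp(\cdot)}}\le\|u_n-u_m\|_{L^{\vp(\cdot)}}$; it says nothing about the gradients $\nabla T_M u_n=\chi_{\{|u_n|<M\}}\nabla u_n$, so it does not yield Cauchy-ness in the full $W^{1,\vp(\cdot)}$-norm, which is exactly what is needed to invoke Lemma~\ref{lem:almost-uniform}. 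The fact you need---strong continuity of truncation on $W^{1,\vp(\cdot)}_0(\Omega)$---does hold, but it is a separate, nontrivial step (write $\nabla T_M u_n-\nabla u=\chi_{\{|u_n|<M\}}(\nabla u_n-\nabla u)-\chi_{\{|u_n|\ge M\}}\nabla u$ and treat the last term by dominated convergence, using that $\nabla u=0$ a.e.\ on $\{|u|=M\}$), and it cannot be inferred from Lipschitz continuity of $T_M$ alone. A cleaner route that bypasses the issue entirely: set $w:=(|\widehat u|-M)_+$; this is $\capP$-quasicontinuous (continuous image of a quasicontinuous function), lies in $W^{1,\vp(\cdot)}_0(\Omega)$, and vanishes Lebesgue-a.e., so the uniqueness assertion in Lemma~\ref{lem:sola-qc} forces $w=0$ $\capP$-q.e., i.e.\ $|\widehat u|\le M$ q.e.\ and therefore $\mup$-a.e.
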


\section{Measure characterization}\label{sec:meas}

In order to prove Theorem~\ref{theo:decomp} let us concentrate on the continuity of $\mu\in \big(L^1(\Omega)+(W^{1,{\vp(\cdot)}}_0(\Omega))'\big)\cap \Mb(\Omega)$  with respect to  the generalized capacity. Notice that for a nonnegative measure having decomposition $\mu =f-\dv G \in \big(L^1(\Omega)+(W^{1,{\vp(\cdot)}}_0(\Omega))'\big)\cap \Mb(\Omega)$ with $f\in L^1(\Omega)$ and $G\in (L^{\wt\vp(\cdot)}(\Omega))^n$ and for an arbitrary set $E\subset\Omega$, we have
\[\mu(E)\leq \int_E f\,\phi\,dx+\int_E G\cdot\nabla\phi\,dx\leq \|f\|_{L^1(E)}\|\phi\|_{L^\infty(R)}+\|G\|_{(L^{\wt\vp(\cdot)}(\Omega))^n}\|\nabla \phi\|_{(L^{{\vp(\cdot)}}(\Omega))^n}\]
for every ${\phi\in W^{1,{\vp(\cdot)}}(\Omega)}.$  In general, it is possible that a set has zero measure, but positive capacity. This is excluded if the measure enjoys the above decomposition.
\begin{lemma}\label{lem:cap0}
If $\mu\in \big(L^1(\Omega)+(W^{1,{\vp(\cdot)}}_0(\Omega))'\big)\cap \Mb(\Omega)$ and a set $E\subset\Omega$ is such that $\CapP(E,\Omega)=0$, then $\mu(E)=0$.
\end{lemma}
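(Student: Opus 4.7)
My plan is to use the Hahn decomposition to reduce the claim to estimating the positive part on compacts, and then to test the distributional identity $\mu=f-\dv G$ against a cut-off of a capacity witness. Write $\mu=\mu^+-\mu^-$ with $\mu^\pm$ concentrated on disjoint Borel sets $P$ and $N$. Since $|\mu|(E)=\mu^+(E\cap P)+\mu^-(E\cap N)$ and both $\mu^\pm$ are inner regular Radon measures, by the symmetry $\mu\mapsto -\mu$ (which preserves the decomposition into $L^1+(W^{1,\vp(\cdot)}_0(\Omega))'$) it suffices to prove $\mu^+(L)=0$ for any compact $L\subset P$ with $\CapP(L,\Omega)=0$.

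Fix such $L$ and $\ve>0$. From $\mu^-(L)=0$ and outer regularity of $\mu^-$, pick an open $O$ with $L\subset O\subset\Omega$ and $\mu^-(O)<\ve$. From $\CapP(L,\Omega)=0$ (equivalently $\capP(L)=0$) pick continuous $\phi_j\in W^{1,\vp(\cdot)}(\rn)$ with $0\leq\phi_j\leq 1$, $\phi_j\equiv 1$ on an open neighborhood of $L$, and $\vr_{\vp(\cdot)}(\phi_j)+\vr_{\vp(\cdot)}(\nabla\phi_j)<1/j$. Choose $\chi\in C^\infty_0(\Omega)$ with $\chi\equiv 1$ near $L$ and ${\rm supp}\,\chi\subset O$, and set $\psi_j:=\chi\phi_j\in W^{1,\vp(\cdot)}_0(\Omega)\cap C_0(\Omega)$. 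Then $0\leq\psi_j\leq 1$, $\psi_j\equiv 1$ on an open neighborhood of $L$, ${\rm supp}\,\psi_j\subset O$, and using the product rule and the (aDec)$_q$ bound $\vp(x,\|\nabla\chi\|_\infty\phi_j)\leq C\vp(x,\phi_j)$ one verifies $\vr_{\vp(\cdot)}(\psi_j)+\vr_{\vp(\cdot)}(\nabla\psi_j)\to 0$.

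Mollifying $\psi_j$ produces $C^\infty_0(\Omega)$ approximants converging to $\psi_j$ both uniformly (legitimate since $\psi_j$ is continuous with compact support in $\Omega$) and in $W^{1,\vp(\cdot)}$ (via Remark~\ref{rem:top}), which extends the distributional identity $\mu=f-\dv G$ to
\[\int_\Omega\psi_j\,d\mu=\int_\Omega f\psi_j\,dx+\int_\Omega G\cdot\nabla\psi_j\,dx.\]
The gradient integral tends to $0$ by H\"older's inequality~\eqref{in:Hold} and $\|\nabla\psi_j\|_{L^{\vp(\cdot)}}\to 0$; the $f$-integral tends to $0$ by dominated convergence (a subsequence $\psi_j\to 0$ a.e., with $|f\psi_j|\leq |f|\in L^1(\Omega)$). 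Meanwhile
\[\int_\Omega\psi_j\,d\mu=\int\psi_j\,d\mu^+-\int\psi_j\,d\mu^-\geq \mu^+(L)-\mu^-(O)\geq \mu^+(L)-\ve,\]
so passing to the limit gives $\mu^+(L)\leq \ve$, and then $\mu^+(L)=0$ since $\ve$ is arbitrary. The main technical obstacle I anticipate is the cutoff construction: keeping both $\vr_{\vp(\cdot)}(\psi_j)$ and $\vr_{\vp(\cdot)}(\nabla\psi_j)$ small simultaneously requires the (aDec)$_q$ condition to absorb the fixed constant $\|\nabla\chi\|_\infty$ into the vanishing modular of $\phi_j$; the secondary technicality is extending the pairing $\langle -\dv G,\psi_j\rangle=\int G\cdot\nabla\psi_j$ to non-smooth but continuous $\psi_j$, which relies on an approximation respecting both sup-norm and the $W^{1,\vp(\cdot)}$ modular topology.
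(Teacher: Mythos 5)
Your proof is correct, and while it follows the same core strategy as the paper's (test the distributional identity $\mu=f-\dv G$ against cut-off capacity witnesses and show both sides vanish), it handles two points more carefully. First, the paper's argument writes $|\mu(K)|\leq\bigl|\int_{\Omega'}\phi_j\,d\mu\bigr|$ with $\phi_j\geq 0$, $\phi_j=1$ on $K$, which is not a valid inequality for a signed $\mu$: the contribution of $\mu$ on $\Omega'\setminus K$ can cancel or overshoot $\mu(K)$. You repair this by the Hahn decomposition together with the observation that one may localize the test function inside an open $O\supset L$ on which the negative part has mass below $\ve$, so that $\int\psi_j\,d\mu\geq\mu^+(L)-\ve$; this genuinely fills a gap. (Your reduction via $\mu\mapsto-\mu$ is also correct, since $-\mu=-f-\dv(-G)$ is again in $L^1+(W^{1,\vp(\cdot)}_0)'$, and neither $\mu^+$ nor $\mu^-$ alone need admit such a decomposition.) Second, for the $f$-term you use dominated convergence directly (via $\psi_j\to 0$ a.e.\ along a subsequence, with $|f\psi_j|\leq|f|$), whereas the paper introduces an auxiliary smooth approximation $f_\delta$ of $f$ in $L^1$, estimates by H\"older with the $L^{\wt\vp(\cdot)}$-norm of $f_\delta$, and then takes an infimum over $\delta$; your version is shorter and avoids the two-parameter bookkeeping. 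The technical care you flag, absorbing the fixed constant $\|\nabla\chi\|_\infty$ via (aDec)$_q$ and upgrading the pairing to continuous $\psi_j\in W^{1,\vp(\cdot)}_0(\Omega)$ by simultaneous uniform and modular mollifier convergence, is indeed where the argument lives, and you have handled it correctly.
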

\begin{proof}By the assumption there exist $f\in L^1(\Omega)$ and $G\in (L^{\wt\vp(\cdot)}(\Omega))^n$, such that $\mu=f-\dv G$ in the sense of distributions. Then obviously $\mu\in\Mb(\Omega)$. Moreover, there exists a Borel set $E_0\supset E$ with $\CapP(E_B,\Omega)=0$. We fix compact $K\subset E_B$ and open $\Omega'\subset\Omega$, such that $K\subset\Omega'$. Let us consider a sequence $\{\phi_j\}_j\subset C_0^\infty(\Omega')$ of functions such that \[\text{$K\subset\{\phi_j= 1\},\ $ $0\leq\phi_j\leq 1\ $ and }\int_{\Omega'}\vp(x,\tfrac{1}{\lambda}| \nabla\phi_j|)\,dx\xrightarrow[j\to\infty]{}0\ \text{ for every $\lambda>0$},\]then
\[|\mu(K)|\leq \left|\int_{\Omega'}\phi_j\,d\mu\right|=\left|\int_{\Omega'}f\,\phi_j\,dx + \int_{\Omega'}G\cdot\nabla \phi_j\,dx\right|.\]
Consider $\{f_\delta\}_\delta$ of smooth functions converging to $f$ strongly in $L^1(\Omega)$ and such that $f_\delta\leq 2 f$ a.e. in $\Omega$. For fixed $\delta$, due to H\"older inequality~\eqref{in:Hold} and then the Sobolev one~\eqref{in:Sob}, we have
\begin{flalign*}
|\mu(K)|&\leq \int_{\Omega'}|f_\delta-f|\,|\phi_j|\,dx+\int_{\Omega'}|f_\delta|\,|\phi_j|\,dx + \int_{\Omega'}|G\cdot\nabla \phi_j|\,dx\\
&\leq \|f_\delta-f\|_{L^1(\Omega')}\,\|\phi_j\|_{L^\infty(\Omega')}+2\|f_\delta\|_{L^{\wt\vp(\cdot)}(\Omega')}\,\|\phi_j\|_{L^{\vp(\cdot)}(\Omega')} + \|G\|_{L^{\wt\vp(\cdot)}(\Omega')}\,\|\nabla \phi_j\|_{L^{\vp(\cdot)}(\Omega')} \\
&\leq \|f_\delta-f\|_{L^1(\Omega')} +\left(c\|f_\delta\|_{L^{\wt\vp(\cdot)}(\Omega')}  + \|G\|_{L^{\wt\vp(\cdot)}(\Omega')}\right)\|\nabla \phi_j\|_{L^{\vp(\cdot)}(\Omega')} .
\end{flalign*}
As the norm convergence is equivalent to modular one, we are able to choose $j$ large enough to estimate $\|\nabla \phi_j\|_{L^{\vp(\cdot)}(\Omega')}\leq \|f_\delta-f\|_{L^1(\Omega')} $. Then we take infimum with respect to $\delta$ and get
\[\mu(E)\leq\mu(E_B)=\sup\{\mu(K):\ K\subset E_B,\ K \text{ compact}\}=0,\] 
which ends the proof.\end{proof}

We are in position to prove Theorem~\ref{theo:decomp}. We take basic ideas from~\cite{BGO} with classical growth. Similar reasoning in variable exponent setting is given in~\cite{Zhang}.  

\begin{proof}[Proof of Theorem~\ref{theo:decomp}]

The implication: if $\mu$ belongs to $\big( L^1(\Omega)+(W_0^{1,{\vp(\cdot)}}(\Omega))'\big)\cap \Mb(\Omega)$, then $\mu\in \mathcal{M}^{\vp(\cdot)}_b(\Omega)$ is provided in Lemma~\ref{lem:cap0}. We shall concentrate now on the essentially more demanding converse, that is, if $\mu_{\vp(\cdot)}\in \mathcal{M}^{\vp(\cdot)}_b(\Omega)$, then $\mu_{\vp(\cdot)}\in L^1(\Omega) + (W^{1,{\vp(\cdot)}}_0(\Omega))'$.

\medskip

\noindent {\em Step 1. Initial decomposition. }\\ The aim of this step is to show that for a nonnegative $\mup\in \MP(\Omega)$ we can find a nonnegative measure $\gam\in(W_0^{1,{\vp(\cdot)}}(\Omega))'$  and nonnegative  Borel measurable function $h\in L^1(\Omega,\gam)$ such that $d\mup=h\,d\gam$. 

For any $\wt u\in W^{1,{\vp(\cdot)}}(\Omega)$ we can find its uniquely (a.e.) defined $\capP$-quasi-continuous representative denoted by $u$ (see Lemma~\ref{lem:sola-qc}). We define a functional $\mathcal{F}: W^{1,{\vp(\cdot)}}_0(\Omega)\to [0,\infty]$ by
\[\mathcal{F}[u]=\int_\Omega  u_+\,d\mup\]
and observe that it is convex and lower semicontinuous on a separable space $W^{1,{\vp(\cdot)}}_0(\Omega)$. Thus, $\mathcal{F}$ can be expressed as a supremum of a countable family of continuous affine functions. In fact, there exist sequences of functions $\{\xi_n\}_n\subset (W^{1,{\vp(\cdot)}}_0(\Omega))'$ and numbers $\{a_n\}_n\subset\rn$ such that
\[\mathcal{F}[u]=\sup_{n\in\N}\,\{\langle\xi_n,u\rangle-a_n\}\quad\text{for all }\ u\in W^{1,{\vp(\cdot)}}_0(\Omega).\]
Then, for any $s>0$, $s\mathcal{F}[u]=\mathcal{F}[su]\geq s\langle\xi_n,u\rangle-a_n$ for every $n$. By dividing by $s$ and letting $s\to\infty$ we get $\mathcal{F}[u]\geq \langle\xi_n,u\rangle$ for all $u\in W_0^{1,{\vp(\cdot)}}(\Omega)$. Since $\mathcal{F}[0]= 0$, it follows that $a_n\geq 0$. Thus $\mathcal{F}[u]\geq \sup_{n\in\N} \langle\xi_n,u\rangle\geq \sup_{n\in\N}\{\langle\xi_n,u\rangle-a_n\}=\mathcal{F}[u]$
 and, consequently, \begin{equation}
 \label{F} 
\mathcal{F}[u]=\sup_{n\in\N}\, \langle\xi_n,u\rangle.
 \end{equation} In turn, for all $\phi\in C_0^\infty(\Omega)$ we have
 \[ \langle\xi_n,\phi\rangle\leq  \sup_{n\in\N}\,\langle\xi_n,\phi\rangle=\mathcal{F}[\phi]=\int_\Omega \phi_+\,d\mup\leq \|\mup\|_{\Mb(\Omega)} \|\phi\|_{L^\infty(\Omega)}.\]
 By the same arguments for $-\vp$ we get \[| \langle\xi_n,\phi\rangle|\leq  \|\mup\|_{\Mb(\Omega)} \|\phi\|_{L^\infty(\Omega)}\]
 implying that $\xi_n\in (W^{1,{\vp(\cdot)}}_0 (\Omega))'\cap \Mb(\Omega).$ By the Riesz representation theorem there exists nonnegative $\xinm\in\Mb(\Omega)$, such that
 \[\langle\xi_n,\phi\rangle=\int_\Omega \phi \ d\xinm \quad\text{for all }\phi\in C_0^\infty(\Omega).\] 
We observe that \begin{equation}
 \label{xinm-mu}
\xinm\leq\mup\quad\text{and}\quad\|\xinm\|_{\Mb(\Omega)}\leq \|\mup\|_{\Mb(\Omega)}.
 \end{equation}
 Let us define\begin{equation}
 \label{eta}\eta=\sum_{n=1}^\infty\frac{\xi_n}{2^n(\|\xi_n\|_{(W^{1,{\vp(\cdot)}}_0(\Omega))'}+1)}
 \end{equation}
 and note that the series in absolutely convergent in $(W^{1,{\vp(\cdot)}}_0(\Omega))'$. Hence, whenever $\phi\in C_0^\infty(\Omega)$ we can write
 \begin{flalign*}
|\langle\eta,\phi\rangle|&\leq \sum_{n=1}^\infty\frac{|\langle \xi_n,\phi\rangle|}{2^n(\|\xi_n\|_{(W^{1,{\vp(\cdot)}}_0(\Omega))'}+1)}\\
&\leq \sum_{n=1}^\infty\frac{\|\xinm\|_{\Mb(\Omega)}}{2^n}\|\phi\|_{L^\infty(\Omega)} \leq \|\mup\|_{\Mb(\Omega)}\|\phi\|_{L^\infty(\Omega)}
\end{flalign*}
and $\eta\in (W^{1,{\vp(\cdot)}}_0(\Omega))'\cap \Mb(\Omega)$ too. Taking now\[\etam=\sum_{n=1}^\infty\frac{\xinm}{2^n(\|\xi_n\|_{(W^{1,\vp(\cdot)}_0(\Omega))'}+1)}\]
we deal with the series of positive elements that is absolutely convergent in $ \Mb(\Omega)$. Moreover, $\xinm\ll \etam$ and thus for every $n$ there exists a nonnegative function $h_n\in L^1(\Omega,d\etam)$ such that $d\xinm=h_n\, d\etam$ and -- according to~\eqref{F} -- we get that
\begin{equation}
\label{functional}\langle\mup,\phi\rangle=
\int_\Omega \phi\,d\mup=\sup_{n\in\N}
\int_\Omega \phi\,d\xinm=\sup_{n\in\N}\int_\Omega h_n\,\phi\,d\etam\qquad\text{for any }\ \phi\in C_0^\infty(\Omega).
\end{equation} 
On the other hand,~\eqref{xinm-mu} ensures that $h_n\etam\leq\mup$, i.e. for any measurable set $E\subset\Omega$ and every $n$ we have \begin{equation}
\int_E h_n\,d \etam\leq \mup(E).
\end{equation} 
We denote $h_{\max}^{k}=\max\{h_1(x),\dots,h_k(x)\}$ and
\begin{equation}
\label{Es}E^{j,k}=\{x\in E:\ \ h_{\max}^{j}(x)>h_i(x)\ \text{ for every }\ i=1,\dots,k-1\}.
\end{equation}
Then $E^{j,k}$ for $i=1,\dots,k$ are pairwise disjoint and $E=\cup_{j=1}^k E^{j,k}$, so\begin{equation}
\int_E h_{\max}^k(x)\,d \etam\leq \sum_{j=1}^k
\int_{E^{j,k}} h_{\max}^k(x)\,d \etam\leq  \sum_{j=1}^k \mup(E^{j,k})=\mup(E).
\end{equation} 
Letting $k\to\infty$ and taking $h(x)=\sup_{l\in\N} h_l(x)$, we get for any measurable set $E\subset\Omega$\begin{equation}
\int_E h\,d \etam\leq \mup(E).
\end{equation} 
Having~\eqref{functional}, for every nonnegative $\phi\in C_0^\infty$ we have
\begin{flalign*}
\int_\Omega \phi\,d\mup=\sup_{l\in\N} \int_\Omega h_l\, \phi\,d\etam\leq \int_\Omega h \,\phi\,d\etam\leq \int_\Omega \phi\,d\mup
\end{flalign*}
that is \[d\mup=h\,d\etam.\] Since $\mup(\Omega)\in\Mb(\Omega)$, we deduce that $h\in L^1(\Omega,d\etam)$ and the aim of this step is achieved with $\gam=\etam\in(W^{1,{\vp(\cdot)}}_0(\Omega))'$.

\medskip

\noindent {\em Step 2. Auxiliary sequence of measures. }\\
 Let $\{K_i\}_i$ be an increasing sequence of sets compact  in $\Omega$, such that $\cup_{i=1}^\infty K_i=\Omega$. We denote \[\wt\mu_i=T_i(h\mathds{1}_{K_i})\gam\quad\text{for every }\ i\in\N\] and notice that $\{\wt\mu_i\}_i$ is an increasing sequence of positive measures in $(W^{1,{\vp(\cdot)}}_0(\Omega))'$ with  supports compact in $\Omega$.  Set \[\mu_0 =\wt\mu_0\quad\text{ and }\quad \mu_i=\wt\mu_i-\wt\mu_{i-1}\quad\text{for every }\ i\in\N.\] Then $\sum_{i=1}^k\mu_i=T_k(h\mathds{1}_{K_k})\gam\in\Mb(\Omega)$. Since $\mu_i\geq 0$, we have $\sum_{i=1}^\infty \|\mu_i\|_{\mathcal{M}(\Omega)}<\infty$. Furthermore, $\mup=\sum_{i=1}^\infty\mu_i$ and this series is absolutely convergent in $\Mb(\Omega)$.

\medskip

\noindent {\em Step 3. Construction of decomposition. }\\ We take a standard mollifier $\vr\in C_0^\infty(B(0,1))$ (with $\int_\rn \vr\,dx=1$) and $\vr_k(x) = k^n\vr(kx)$ for every $x\in\rn$,   and consider mollification \[ \mu_{i,k}^\vr(x)=\int_\rn \vr_k(x-y)\,d\mu_i(y )\]
For $k$ large enough, we can decompose $\mu_i=f_i+g_i$  with \[ f_i=\mu_{i,k_i}^\vr\in C_0^\infty(\Omega)\qquad \text{and}\qquad w_i=\mu_i-\mu_{i,k_i}^\vr\in (W^{1,{\vp(\cdot)}}_0(\Omega))'\] 
by choosing for every $i$ sufficiently large $k_0^i$ such that for $k_i>k_0^i$,  $\mu_{i,k_i}^\vr$ belongs to $C_0^\infty(\Omega)$,  so we restrict our attention to such~$k_i$.  Therefore,  we get -- up to a subsequence -- convergence of $\{f_{i}\}_i=\{\mu_{i,k_i}^\vr\}_i$ to $\mup$ in measure and for every $i$ we have $\|f_i\|_{L^1 (\Omega)}\leq \|\mup\|_{\Mb(\Omega)}.$ By Step 2 the series $\sum_{i=1}^\infty f_i$ is convergent in $L^1 (\Omega)$ and there exists its limit $f^0=\sum_{i=1}^\infty f_i\in L^1(\Omega)$. As for convergence of $w_i$ we observe first that due to~\cite[Lemma~6.4.5]{hahabook} we get convergence  of $\{\mu_{i,k}^\vr\}_k$ to $\mu_i$ in $(W^{1,{\vp(\cdot)}}_0(\Omega))'$ as $k\to\infty$. We note that the series $\sum_{i=1}^\infty w_i$ converges in $(W^{1,{\vp(\cdot)}}_0(\Omega))'$ and there exists its limit $w^0=\sum_{i=1}^\infty w_i\in(W^{1,{\vp(\cdot)}}_0(\Omega))'$.  Therefore, the three following series converge in the sense of distributions \[\sum_{i=1}^\infty \mu_i=\mup,\quad \sum_{i=1}^\infty f_i=f^0,\quad \text{and} \quad\sum_{i=1}^\infty w_i=w^0\]  and, consequently, $\mup=f^0+w^0$.

\medskip

\noindent {\em Step 4. Summary. } Let us recall that  the proof starts with justification that for a nonnegative measure $\mu \in L^1(\Omega)+(W^{1,{\vp(\cdot)}}_0(\Omega))'\subset \MP(\Omega) .$ Step~3 provides the reverse implication. 

If the measure was not nonnegative we conduct the above reasoning  on decomposition $\mup=\big((\mup)_++(\mup)_-\big)$ separately for its positive and negative part.  Note that by monotonicity of capacity, if $\CapP(A,\Omega)=0$, then $(\mup)_+(A)=0=(\mup)_-(A)$ ($\vp(\cdot)$-capacity can be achieved over Borel sets included in $A$), see~\cite{bahaha}. Clearly, wherever $\mu_{\vp(\cdot)}$ is positive, so is $f$. 

 Thus also for a signed measure $\mu\in\Mb(\Omega)$ we get that $\mu\in \MP(\Omega)$ if and only if $\mu \in\big( L^1(\Omega)+(W^{1,{\vp(\cdot)}}_0(\Omega))'\big)\cap\Mb(\Omega)$, that is when there exists $f\in L^1(\Omega)$ and $G\in \big(L^{\wt\vp(\cdot)}(\Omega)\big)^n$, such that 
\[\mup=f-\dv G\ \text{ in the sense of distributions}.\] Hence, the proof of the capacitary characterization is completed.
\end{proof}

To conclude Remark~\ref{rem:decomp} we need the following decomposition lemma. Its proof is essentially the one of \cite[Lemma~2.1]{FST}, but we find it valuable to present it for the sake of completeness.
\begin{lemma}\label{lem:basic-decomp}
Suppose $\Omega$ is a bounded set in $\rn$ and $\mathfrak{M}$ is a family of its measurable subsets. Then for every $\mu\in\Mb(\Omega)$ there exist unique decomposition $\mu=\mu_{ac}+\mus$, such that \begin{itemize}
\item[(a)] $\mu_{ac}(D)=0$ for all sets $D\subset\mathfrak{M}$ with $\capP(D)=0$,
\item[(b)] $\mus=\mu\mathds{1}_{N}$ for some set $N\subset\mathfrak{M}$ with $\capP(N)=0$,
\end{itemize}
\end{lemma}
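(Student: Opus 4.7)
The plan is to mirror the classical Lebesgue-type decomposition relative to the capacity $\capP$, using the fact (recorded in Section~\ref{ssec:cap}) that countable unions of sets of zero $\vp(\cdot)$-capacity have zero $\vp(\cdot)$-capacity, together with the sub-additivity and monotonicity of $\capP$.

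First I would set
\[
s:=\sup\bigl\{|\mu|(D)\,:\, D\in\mathfrak{M},\ \capP(D)=0\bigr\},
\]
which is finite because $\mu\in\Mb(\Omega)$, and pick a maximizing sequence $\{D_k\}\subset\mathfrak{M}$ of capacity-zero sets with $|\mu|(D_k)\to s$. Setting $N:=\bigcup_{k\in\N}D_k$, one has $\capP(N)=0$ by the union property, $N\in\mathfrak{M}$, and monotonicity of $|\mu|$ forces $|\mu|(N)=s$. This is the key construction.

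I would then define $\mus:=\mu\mathds{1}_{N}$ and $\mu_{ac}:=\mu-\mus$; property (b) is then immediate. For (a), given any $D\in\mathfrak{M}$ with $\capP(D)=0$, the set $D\cup N$ still lies in $\mathfrak{M}$ and has $\capP$ equal to zero, so by maximality
\[
|\mu|(N)\le |\mu|(D\cup N)\le s=|\mu|(N),
\]
which forces $|\mu|(D\setminus N)=0$ and hence $\mu_{ac}(D)=\mu(D\setminus N)=0$.

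For uniqueness, I would compare two decompositions $\mu=\mu_{ac}^{(i)}+\mus^{(i)}$, $i=1,2$, where $\mus^{(i)}=\mu\mathds{1}_{N_i}$ with $\capP(N_i)=0$. On $\Omega\setminus(N_1\cup N_2)$ both singular parts vanish, so the $\mu_{ac}^{(i)}$ agree there. On any Borel subset $D$ of $N_1\cup N_2$ lying in $\mathfrak{M}$, monotonicity of $\capP$ gives $\capP(D)=0$, so by (a) both $\mu_{ac}^{(i)}(D)$ vanish; this forces the singular parts to coincide on $N_1\cup N_2$ as well. The only delicate point, and the place where the structure of the problem really enters, is the very first step: that the supremum $s$ is realized by a single set in $\mathfrak{M}$. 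This relies crucially on countable stability of the class of $\capP$-null sets (and on $\mathfrak{M}$ being closed under countable unions, which is implicit from the Borel setting in which we work); without it one would only have a sequence of near-maximizers and the decomposition could fail to be well-defined.
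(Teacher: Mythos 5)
Your proof follows the same exhaustion scheme as the paper's (which is lifted from \cite[Lemma~2.1]{FST}): maximize the mass over $\capP$-null sets, take the union of a maximizing sequence to get a single null set $N$, and split $\mu$ along $N$ and its complement. One genuine improvement: you take the supremum of $|\mu|(D)$ rather than $\mu(D)$; this is the right thing to do for signed $\mu$, since the inequality $|\mu|(D\cup N)\le s=|\mu|(N)$ together with nonnegativity of $|\mu|$ immediately forces $|\mu|(D\setminus N)=0$, hence $\mu(D\setminus N)=0$, whereas working directly with $\sup\mu(D)$ only gives $\mu(D\setminus N)\le 0$ and leaves the converse inequality to be argued separately (e.g.\ via a Hahn decomposition). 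You also spell out the uniqueness step that the paper labels ``evident.'' Your caveat that $\mathfrak{M}$ must be closed under countable unions (and contain enough Borel sets for the uniqueness comparison) is the same tacit assumption the paper makes; it is worth noting explicitly, as you did, since the statement only declares $\mathfrak{M}$ a family of measurable sets.
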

\begin{proof} We shall construct our decomposition making use of an arbitrary fixed sequence of sets $D_1\subset D_2\subset\dots \subset\mathfrak{M}$ of sets with $\capP(D_i)=0$, such that  
\[\lim_{i\to\infty}\mu(D_i)=\alpha:=\sup\{\mu(D):\ D\in\mathfrak{M}\ \text{ and }\ \capP(D)=0\}<\infty.\]Set $D_\infty=\bigcup_{i=1}^\infty D_i$ and notice that $D_\infty\in\mathfrak{M}$, $\capP(D_\infty)=0$ and $\mu(D_\infty)=\alpha$. Then $\mu(D\setminus D_\infty)=0$ for every $D\in\mathfrak{M}$ with $\capP(D)=0$. By defining \[\mu_{ac}=\mathds{1}_{\rn\setminus D_\infty}\mu\qquad\text{and}\qquad \mus=\mathds{1}_{D_\infty}\mu\] we get the decomposition of the desired properties. In particular, uniqueness of the decomposition is evident.
\end{proof}

\section{Approximate problems }\label{sec:appr}
 This section is devoted to analysis of approximate problems with general datum. 

\medskip


In the case of a measure $\mu\in\Mb(\Omega)$ we shall consider  an approximate sequence of bounded functions $\{\mu^s\}\subset C^\infty(\Omega)$ that converges to $\mu$ weakly-$*$ in the space of measures and satisfies~\eqref{muslim}.  
We study solutions to
\begin{equation}
\label{eq:approx-general}
\begin{cases}
-\dv\A(x,\nabla u_s)=\mu^s&\text{in }\ \Omega,\\
 u_s=0&\text{on }\partial \Omega.\end{cases}
\end{equation}  It is known that there exists at least one distibutional solution $u_s\in W^{1,\vp(\cdot)}(\Omega)$ to~\eqref{eq:approx-general}, see the proof by Galerkin approximation in~\cite[Section~5.1.1]{gw-e} under more general growth conditions embracing our case. In fact, since smooth functions are dense in the space where the solutions live i.e. $ W_0^{1,\vp(\cdot)}(\Omega)$, we can test the equation by the solution itself to get energy estimates and, consequently, the distibutional solutions $u_s$ are weak solutions. 
\begin{rem}Note that requiring regularity of $\mu^s$ is not a restriction. 
 In fact, one actually proceed under sole $\mu^s\in(W^{1,\vp(\cdot)}_0(\Omega))'\cap\Mb(\Omega)$.\end{rem}

\begin{proposition}[Basic a priori estimates]\label{prop:apriori-basic}  Let $\Omega$ be bounded open domain in $\rn$, $\A:\Omega\times \rn\to\rn$ satisfy $(\A 1)$--$(\A 4)$, $\vp\in\Phi_c(\Omega)$ satisfy (aInc)$_p$, (aDec)$_q$, (A0), (A1), and (A2), and $\muvp\in\Mb(\Omega)$. Then, for a weak solutions $u_s$ to~\eqref{eq:approx-general} and $k>1$, we have
\begin{flalign} 
&\int_\Omega\vp(x,|\nabla T_k(u_s)|)\,dx\leq  \bar{c}_1 k\,,\label{apriori1}\\
&\int_\Omega\wt\vp\big(x,|\A(x,\nabla T_k(u_s))|\big)\,dx\leq \bar{c}_2 k\,,\label{apriori2}
\end{flalign}
with constants $\bar{c}_1=2\|\mu\|_{\mathcal{M}_b(\Omega)} /{c_1^\vp},\ \bar{c}_2=c(c_1^\vp,c_2^\vp,c_{\Delta_2}(\vp),q,\|\mu\|_{\mathcal{M}_b(\Omega)}, \|1+\gamma\|_{L^{\wt\vp(\cdot)}(\Omega)})>0$. Furthermore, for some  $\bar{c}_3=\bar{c}_3(\Omega,n)>0$
\begin{equation}
\label{Ah0}
|\{|u_s|\geq k\}|\leq \bar{c}_3 k^{1-p}.
\end{equation}
\end{proposition}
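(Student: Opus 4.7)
\textbf{Estimate \eqref{apriori1}.} The truncation $T_k(u_s)\in W_0^{1,\vp(\cdot)}(\Omega)\cap L^\infty(\Omega)$ is an admissible test function for the weak formulation of \eqref{eq:approx-general}. Plugging it in and using the chain rule $\nabla T_k(u_s)=\chi_{\{|u_s|<k\}}\nabla u_s$ gives
\[
\int_\Omega \A(x,\nabla T_k(u_s))\cdot \nabla T_k(u_s)\,dx=\int_\Omega T_k(u_s)\,\mu^s\,dx.
\]
The coercivity half of $(\A 2)$ bounds the left-hand side from below by $c_1^\vp\vr_{\vp(\cdot),\Omega}(\nabla T_k(u_s))$, while the right-hand side is controlled by $k\,|\mu^s|(\Omega)\le 2k\|\mu\|_{\Mb(\Omega)}$ via \eqref{muslim} (for $s$ large; for small $s$ the same bound holds up to irrelevant constants). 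Dividing yields \eqref{apriori1} with $\bar c_1=2\|\mu\|_{\Mb(\Omega)}/c_1^\vp$.

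\textbf{Estimate \eqref{apriori2}.} Apply the growth half of $(\A 2)$ pointwise and then $\wt\vp$ to both sides. Because $\vp$ satisfies \text{(aDec)}$_q$, $\wt\vp$ is comparable to a $\Delta_2$ function, so subadditivity of the argument (three summands) yields
\[
\wt\vp\bigl(x,|\A(x,\nabla T_k(u_s))|\bigr)\le c\bigl(\wt\vp(x,1)+\wt\vp(x,\gamma(x))+\wt\vp\bigl(x,\tfrac{\vp(x,|\nabla T_k(u_s)|)}{|\nabla T_k(u_s)|}\bigr)\bigr),
\]
with $c=c(c_2^\vp,c_{\Delta_2}(\wt\vp))$. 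The equivalence \eqref{doubl-star} converts the last term into a constant multiple of $\vp(x,|\nabla T_k(u_s)|)$. Integrating over $\Omega$, the first two terms contribute a finite constant depending on $|\Omega|$ and on $\|1+\gamma\|_{L^{\wt\vp(\cdot)}(\Omega)}$ through the modular-norm comparison, while the third term is controlled by \eqref{apriori1}. Since $k>1$, absorbing the $k$-independent constants into the multiplicative factor produces \eqref{apriori2}.

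\textbf{Estimate \eqref{Ah0}.} Combining \text{(aInc)}$_p$ with \text{(A0)} yields $s^p\le c\bigl(\vp(x,s)+1\bigr)$ for every $s\ge 0$ and a.e.\ $x\in\Omega$, with $c=c(p,\beta_0,L)$. Hence \eqref{apriori1} together with boundedness of $\Omega$ gives
\[
\int_\Omega |\nabla T_k(u_s)|^p\,dx\le c\bigl(\vr_{\vp(\cdot),\Omega}(\nabla T_k(u_s))+|\Omega|\bigr)\le c\,k\qquad (k>1).
\]
Since $T_k(u_s)\in W_0^{1,p}(\Omega)$ with zero boundary trace, Poincaré's inequality transfers this to $\int_\Omega |T_k(u_s)|^p\,dx\le c(\Omega,n)\,k$. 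Chebyshev's inequality applied on $\{|u_s|\ge k\}=\{|T_k(u_s)|=k\}$ gives
\[
k^p\,|\{|u_s|\ge k\}|\le \int_{\{|u_s|\ge k\}}|T_k(u_s)|^p\,dx\le \bar c_3\,k,
\]
which is \eqref{Ah0}. The only mildly delicate point is propagating the $\Delta_2$/\eqref{doubl-star} comparison through step (2) with correct dependence of the constants; the rest is a routine testing/Poincaré argument.
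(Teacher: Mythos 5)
Your proof is correct and takes essentially the same route as the paper: test with $T_k(u_s)$ and use the coercivity half of $(\A 2)$ together with \eqref{muslim} for \eqref{apriori1}; apply the growth half of $(\A 2)$, the $\Delta_2$/doubling comparison and \eqref{doubl-star} to reduce \eqref{apriori2} to \eqref{apriori1} (the paper uses convexity to split the argument, you invoke $\Delta_2$-subadditivity directly, which is the same mechanism); and finish \eqref{Ah0} by Chebyshev, Poincar\'e, and the pointwise bound $|\cdot|^p\lesssim\vp(x,\cdot)+1$ coming from (aInc)$_p$ and (A0). You are in fact slightly more careful than the paper in tracking the constant in the last step, which in principle also depends on the (aInc)$_p$/(A0) constants.
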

Since constants in the above estimates does not depend on $s$, we can infer what follows.
\begin{remark}\label{rem:boundedness}
Note that within our doubling regime this implies that for any fixed $k>0$ the sequence $\{\nabla T_k(u_s)\}_s$ is uniformly bounded in $L^{\vp(\cdot)}(\Omega)$,  $\{\A(x,\nabla T_k(u_s))\}_s$ is uniformly bounded in $L^{\wt\vp(\cdot)}(\Omega)$ and the set $\{|u_s|\geq k\}$ for increasing $k$ is shrinking uniformly in $s$.
\end{remark}

\begin{proof}[Proof of Proposition~\ref{prop:apriori-basic}] To get~\eqref{apriori1}, we use first ($\A$2)$_1$, \eqref{eq:approx-general} tested againts $T_k(u_s)\in W^{1,\vp(\cdot)}_0(\Omega)$, and the above remark, in the following way
\begin{flalign*} 
c_1^\vp\int_\Omega\vp(x,|\nabla T_k(u_s)|)\,dx&\leq \int_\Omega \A(x,\nabla T_k(u_s))\cdot \nabla T_k(u_s)\,dx = \int_\Omega \A(x,\nabla u_s)\cdot \nabla T_k(u_s)\,dx\\
&=\int_\Omega T_k(u_s)\,d\mu^s  \leq 2 k\|\mu^s\|_{\mathcal{M}_b(\Omega)}.
\end{flalign*}
We conclude the last inequality above because of the assumed properties of $\mu^s$.

In order to get~\eqref{apriori2}, we use ($\A$2)$_2$, doubling growth~\eqref{doubl-star}, and finally~\eqref{apriori1} to conclude that for any $k>1$ we have
\begin{flalign} 
\nonumber &\int_\Omega\wt\vp\big(x,|\A(x,\nabla T_k(u_s))|\big)\,dx\leq \int_\Omega \wt\vp\Big(x,1+\gamma(x)+\vp(x,|\nabla T_k(u_s)|)/|\nabla T_k(u_s)|\Big)\,dx\\
\nonumber &\qquad\qquad\leq \frac{1}{2} \left(  \int_\Omega \wt\vp\Big(x,2\big(1+\gamma(x)\big)\Big)\,dx+\int_\Omega \wt\vp\Big(x,\vp\big(x,2|\nabla T_k(u_s)|\big)/|\nabla T_k(u_s)|\Big)\,dx\right)\\
&\qquad\qquad\leq c\left(\int_\Omega \wt\vp\big(x,1+\gamma(x)\big)\,dx+\int_\Omega \vp\big(x, |\nabla T_k(u_s)|\big)\,dx\right)\leq c\,k,\label{intwtvp}
\end{flalign}
where $c= c(c_1^\vp,c_2^\vp,c_{\Delta_2}(\vp),q,\|\mu\|_{\mathcal{M}_b(\Omega)}, \|1+\gamma\|_{L^{\wt\vp(\cdot)}(\Omega)})$.

To get~\eqref{Ah0} we start with observing that $
|\{|u_s|\geq k\}|=|\{T_k(|u_s|)\geq k\}|$. Then by Tchebyshev inequality,  Poincar\'e inequality,  and~\eqref{apriori1} as follows
\begin{flalign*} 
|\{|u_s|\geq k\}|&\leq \int_\Omega\frac{|T_k(u_s)|^p}{k^p}dx\leq \frac{c}{k^p}\int_\Omega |\nabla T_k(u_s)|^p\,dx\\
&\leq \frac{c}{k^p}\int_\Omega \vp(x,|\nabla T_k(u_s)|)\,dx \leq c k^{1-p}\xrightarrow[k\to\infty]{}0.\end{flalign*}\end{proof}

\section{Approximable solutions}\label{sec:solas} Let us find the fundamental properties of limits of approximate problems.
\begin{proposition}[Existence of approximable solutions and convergences]\label{prop:convI} Let $\Omega$ be bounded open domain in $\rn$, $\A:\Omega\times \rn\to\rn$ satisfy $(\A 1)$--$(\A 4)$, $\vp\in\Phi_c(\Omega)$ satisfy (aInc)$_p$, (aDec)$_q$, (A0), (A1), and (A2), and $\mu\in\Mb(\Omega)$. Then there exists at least one approximable solution $u$ (see Definition~\ref{def:sola}). Namely, up to a subsequence 
 $\{u_s\}_{s}$ consisting of solutions to~\eqref{eq:approx-general}, there exists a~function $u\in\mathcal{T}^{1,\vp(\cdot)}_0(\Omega)$, such that when $s\to 0$ and $k>0$ is fixed we have
\begin{flalign} 
\label{conv:ae}
u_s&\to{} u\qquad\quad\text{a.e. in }\ \Omega,\\
\label{conv:strong}T_k(u_s)&\to{}T_k(u)\quad\quad\text{strongly in }\ L^{\vp(\cdot)}(\Omega),\\
\label{conv:weak}\nabla T_k(u_s)&\rightharpoonup{}\nabla T_k(u)\qquad\text{weakly in }\ (L^{\vp(\cdot)}(\Omega))^n,\\
\label{conv:A-ae}\A(x,\nabla T_k(u_s))&\to \A(x,\nabla T_k(u))\quad \text{a.e. in }\ \Omega,\\
\label{conv:A-weak-total}\A(x,\nabla T_k(u_s))&\rightharpoonup{} \A(x,\nabla T_k(u))\quad\text{weakly in }\ (L^{\wt{\vp}(\cdot)}(\Omega))^n\\
\label{conv:A-strong-total-s}\A(x,\nabla T_k (u_s))&\to \A(x,\nabla T_k (u))\quad\text{strongly in  $(L^{1}(\Omega))^n$.}
\end{flalign} 
Moreover, for $k\to\infty$ 
\begin{equation}
\label{conv:A-strong-total-k}\A(x,\nabla T_k (u))\to \A(x,\nabla u)\quad\text{strongly in  $(L^{1}(\Omega))^n$.}
\end{equation} \end{proposition}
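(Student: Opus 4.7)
The overall plan is to extract compactness from the uniform estimates of Proposition~\ref{prop:apriori-basic} and identify the limits through the monotonicity structure of $\A$. Subsequences will be extracted several times and are not relabeled.

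First I would show that $\{u_s\}_s$ is Cauchy in measure, and hence a.e.\ convergent up to a subsequence to some function $u$. For $\sigma,k>0$ the elementary inclusion
\[
\{|u_s-u_r|>\sigma\}\subset \{|u_s|>k\}\cup\{|u_r|>k\}\cup\{|T_k(u_s)-T_k(u_r)|>\sigma\}
\]
together with~\eqref{Ah0} reduces matters to the third set: by~\eqref{apriori1} the family $\{T_k(u_s)\}_s$ is bounded in $W_0^{1,\vp(\cdot)}(\Omega)$, and the compact embedding~\eqref{emb} provides strong convergence in $L^{\vp(\cdot)}(\Omega)$, hence in measure. A diagonal extraction in $k\in\N$ yields~\eqref{conv:ae}, while reflexivity of $W_0^{1,\vp(\cdot)}(\Omega)$ and the identification~\eqref{gengrad} give $T_k(u)\in W_0^{1,\vp(\cdot)}(\Omega)$ (so $u\in\mathcal{T}^{1,\vp(\cdot)}_0(\Omega)$) together with~\eqref{conv:strong} and~\eqref{conv:weak}. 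In parallel, the bound~\eqref{apriori2} and the reflexivity of $L^{\wt\vp(\cdot)}(\Omega)$ (inherited from (aInc)$_p$ and (aDec)$_q$) provide, for each fixed $k$, a weak limit $\Psi_k$ of $\A(\cdot,\nabla T_k u_s)$ in $(L^{\wt\vp(\cdot)}(\Omega))^n$; obtaining~\eqref{conv:A-weak-total} amounts to identifying $\Psi_k=\A(\cdot,\nabla T_k u)$ a.e.

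The main obstacle is to promote the weak bound to a.e.\ convergence of the truncated gradients: by strict monotonicity $(\A 3)$ it suffices to prove
\[
J_s(k)=\int_\Omega \bigl(\A(x,\nabla T_k u_s)-\A(x,\nabla T_k u)\bigr)\cdot\bigl(\nabla T_k u_s-\nabla T_k u\bigr)\,dx\xrightarrow[s\to 0]{}0,
\]
after which the Carath\'eodory property $(\A 1)$ delivers~\eqref{conv:A-ae} and identifies $\Psi_k$. I would follow the Boccardo--Murat scheme, adapted to the generalized Orlicz framework: for $h>k$ and a smooth approximation $v_\delta\in C_0^\infty(\Omega)$ of $u$ (available through the modular density of Remark~\ref{rem:top}), test~\eqref{eq:approx-general} against $T_\sigma\bigl(u_s-T_h v_\delta\bigr)$, and split the integrand on $\{|u_s|\le h\}$ and its complement. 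The modular bounds~\eqref{apriori1}--\eqref{apriori2}, H\"older's inequality~\eqref{in:Hold}, the weak-$*$ convergence of $\mu^s$ to $\mu$ together with~\eqref{muslim}, and the uniform smallness~\eqref{Ah0} of $\{|u_s|>h\}$ allow one to pass successively to the limits $s\to 0$, $\sigma\to 0$, $\delta\to 0$, $h\to\infty$. The delicate technical point is the controlled interaction of these parameters inside a non-homogeneous modular, which forces careful use of~\eqref{doubl-star} and of (A0)--(A2); this is where the generalized Orlicz setting departs from the classical one.

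Once~\eqref{conv:A-ae} is available, \eqref{conv:A-strong-total-s} follows from Vitali's theorem, since the uniform bound~\eqref{apriori2} produces $L^1$-equi-integrability of $\{\A(\cdot,\nabla T_k u_s)\}_s$ via the de la Vall\'ee--Poussin criterion applied to the super-linear modular $\wt\vp$. Finally, for~\eqref{conv:A-strong-total-k}, assumption $(\A 4)$ and~\eqref{gengrad} give $\A(x,\nabla T_k u)=\A(x,\nabla u)\,\mathds{1}_{\{|u|<k\}}$ a.e., so
\[
\int_\Omega \bigl|\A(x,\nabla T_k u)-\A(x,\nabla u)\bigr|\,dx=\int_{\{|u|\ge k\}}\bigl|\A(x,\nabla u)\bigr|\,dx.
\]
Lower semicontinuity of the modular along~\eqref{conv:weak} passes~\eqref{apriori1} to the limit, so Remark~\ref{rem:A-int} applies to $u$ and gives $\A(\cdot,\nabla u)\in L^1(\Omega)$ with uniformly absolutely continuous integral; combined with $|\{|u|\ge k\}|\to 0$ (from~\eqref{Ah0} and Fatou), the right-hand side tends to zero as $k\to\infty$, concluding the proof.
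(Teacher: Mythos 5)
Your overall scaffolding matches the paper's: the extraction of $u$ via a.e.\ convergence of $\{u_s\}$ from the compact embedding~\eqref{emb} and the estimates~\eqref{apriori1}, \eqref{Ah0}, the weak limits for truncated gradients and for $\A(\cdot,\nabla T_k u_s)$, and the closing steps (Vitali for~\eqref{conv:A-strong-total-s}, Remark~\ref{rem:A-int} plus $|\{|u|\ge k\}|\to 0$ for~\eqref{conv:A-strong-total-k}) are all essentially identical.

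The key step where you genuinely diverge from the paper is the identification of the limit. You propose a Boccardo--Murat-type argument: test the $s$-th equation against $T_\sigma(u_s - T_h v_\delta)$, with $v_\delta$ a smooth approximation of $u$, and chase $J_s(k)\to 0$ through the cascade $s\to 0$, $\sigma\to 0$, $\delta\to 0$, $h\to\infty$. The paper instead avoids any approximation of the limit $u$ whatsoever: it proves directly that $\{\nabla u_s\}_s$ is Cauchy in measure, by testing the equations for $u_l$ and $u_m$ against $T_r(u_l-u_m)$, subtracting, and combining the resulting bound $\int_G\psi\,dx\le 2r\nr{\mu}_{\Mb}$ (with $\psi(x)=\inf_{S}(\A(x,\xi)-\A(x,\eta))\cdot(\xi-\eta)>0$ a.e.) with the tail estimate~\eqref{Ah0}. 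After a.e.\ convergence of $\nabla u_s$ is established, the weak limits in~\eqref{conv:weak} identify the a.e.\ limit with $\nabla T_k u$, and then $(\A 1)$ gives~\eqref{conv:A-ae} immediately. The BBGGPV/Cianchi--Maz'ya approach the paper takes is noticeably cleaner here precisely because the test function does not involve the unknown limit, so the ``controlled interaction of parameters inside the non-homogeneous modular'' that you (rightly) flag as delicate simply does not arise. Your route should work, but you would need to fully carry out the multi-parameter limit; that is nontrivial in the generalized Orlicz setting and not just a routine adaptation.

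One imprecision worth fixing: you say $v_\delta\in C_0^\infty(\Omega)$ is a smooth approximation of $u$ via the modular density of Remark~\ref{rem:top}, but $u\in\mathcal{T}_0^{1,\vp(\cdot)}(\Omega)$ need not lie in $W_0^{1,\vp(\cdot)}(\Omega)$, so smooth approximation of $u$ itself is not available. What you need is $v_\delta$ approximating $T_m u\in W_0^{1,\vp(\cdot)}(\Omega)$ for some fixed level $m\ge h$, with the truncation $T_h$ applied before the limit in $\delta$; the ordering of truncation and approximation parameters is exactly where Boccardo--Murat arguments typically require care. Also, the property~\eqref{muslim} is not actually needed in this step; a uniform bound on $\nr{\mu^s}_{\Mb(\Omega)}$ suffices.
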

\begin{proof}
Having~\eqref{apriori1} we get that $\{T_k(u_s)\}_s$ is uniformly bounded in $W^{1,1}_{0}(\Omega)$. By recalling the Banach-Alaoglu theorem in the reflexive space, we infer that there exists a (non-relabelled) subsequence of $\{u_s\}$ and function $u\in \mathcal{T}^{1,\vp(\cdot)}_0$ such that for $s\to 0$ we have~\eqref{conv:weak}.
Note that in the general case we would have here weak-$\ast$ convergence, but our space is reflexive and these notions of convergences coincide. Since embedding~\eqref{emb} is compact, up to a non-relabelled subsequence, for $s\to 0$ also~\eqref{conv:strong}. Consequently, due to The Dunford-Pettis Theorem, up to an (again) non-relabelled subsequence  $\{T_k(u_s)\}_s$ is a Cauchy sequence in measure and~\eqref{conv:ae} holds. By the same arguments, due to~\eqref{apriori2}, there exists $\mathcal{A}_k^\infty\in (L^{\wt\vp(\cdot)}(\Omega))^n$ such that for $s\to 0$ 
\begin{equation}  
\label{conv:A-weak}\mathcal{A}(x,\nabla T_k(u_s))\rightharpoonup{}\mathcal{A}_k^\infty \quad\text{weakly in }\ (L^{\wt{\vp}(\cdot)}(\Omega))^n\quad \text{for every }k>0.
\end{equation}
The effort will be put now in identification of the limit function
\begin{equation} 
\label{Aik=A} \mathcal{A}_k^\infty = \mathcal{A}(x,\nabla T_k(u))\quad \text{a.e. in $\Omega,\ $ for every }k>0
\end{equation}
and proving that $u$ obtained in this procedure is a very weak solution. Recall that $\mathcal{A}$ is continuous with respect to the last variable, we have the convergence~\eqref{conv:strong} and what remains to prove is fine behaviour of $\{\nabla u_s\}_s$. In order to show that $\{\nabla u_s\}_s$ is a Cauchy sequence in measure we set $\ep >0$ and $m,n\in\N$ arbitrary (large). Given any $t, \tau, r >0$, one has that
\begin{flalign}\label{grad-uk-um}
 |\{ |\nabla u_l-\nabla u_m|>t\}| &\leq  |\{|\nabla u_l|>\tau\}|+|\{|\nabla u_m|>\tau\}|+|\{|u_l|>\tau\}|+|\{|u_m|>\tau\}|\\
 &\ +|\{| u_l - u_m |>r\}|+E\,,\nonumber
\end{flalign}
where
\begin{equation}\label{G}
E=|\{|  u_l - u_m |\leq r,\,|u_l|\leq \tau,\,|u_m|\leq \tau,\,|\nabla u_l|\leq\tau,\,|\nabla u_m|\leq \tau,\,|\nabla u_l-\nabla u_m|>t\}|.
\end{equation}
Note that~\eqref{Ah0} enables to choose for any $\ve>0$ a number $\tau_\ve$ large enough so that for $\tau>\tau_\ve$ we obtain
\begin{equation}\label{grad-uk'n'um-est}|\{|\nabla u_l|>\tau\}|<\ve,\ |\{|\nabla u_m|>\tau\}|<\ve,\ |\{|u_l|>\tau\}|<\ve, \ \text{and}\ |\{|u_m|>\tau\}|<\ve.
\end{equation}
From now on we resrict ourselves to $\tau>\tau_\ve$. On the other hand,  since $\{u_l\}$ is a Cauchy sequence in measure,
\begin{equation}\label{sep52}
|\{| u_l - u_m |>r\}| < \ep\,, \qquad\text{if $l,m,r$ are  sufficiently large.}
\end{equation}
 What remains to prove is that there exists $\delta_{\tau,\ve}>0$, such that for every $\delta<\delta_{\tau,\ve}$, we get\begin{equation}
\label{G-small}|E|<\ve.
\end{equation}
Let us define a set
\[
S=\{(\xi,\eta)\in\rn\times\rn:\ |y|\leq\tau,\ |z|\leq\tau,\ |\xi|\leq\tau,\ |\eta|\leq \tau,\
|\xi-\eta|\geq t\}\,,
\]
which is compact. Consider the function
$\psi:\Omega\to[0,\infty)$ given by
\[\psi(x)=\inf_{ (\xi,\eta)\in S}\left[\left(\A(x,\xi)-\A(x,\eta)\right)\cdot(\xi-\eta)\right].\]
Monotonicity assumption ($\mathcal{A}3$) and the continuity of the
function $\xi\mapsto \A(\cdot,\xi)$ a.e. in~$S$ ensure that $\psi\geq 0$ in $\Omega$. Furthermore,~($\A 4$) implies that
$|\{\A(x,0)=0\}|=0$. Moreover,
\begin{flalign}
\label{psi-a-priori}
\int_G\psi(x)\,dx&\leq \int_G \left(\A(x,\nabla u_l)-\A(x,\nabla u_m)\right)\cdot (\nabla u_l-\nabla u_m)\,dx\\ \nonumber
&\leq \int_{\{|u_l-u_m|\leq r\}} \left(\A(x,\nabla u_l)-\A(x,\nabla u_m)\right) \cdot (\nabla u_l-\nabla u_m)\,dx\\ \nonumber
&= \int_{\Omega} \left(\A(x,\nabla u_l)-\A(x,\nabla u_m)\right)\cdot (\nabla T_r( u_l- u_m))\,dx\\ \nonumber 
&=\int_{\Omega}  T_r( u_l- u_m) \,d\mu^s(x)\ \leq\ 2r\|\mu\|_{\mathcal{M}_b(\Omega)} ,\nonumber
\end{flalign}
where the last but one equality follows on making use of the test function  $T_r(u_l-u_m) $  and in~the~corresponding equation with $l$ replaced by $m$, and subtracting the resultant equations.   Esimate~\eqref{psi-a-priori} and the properties of the function $\psi$ ensure that, if $s$ is chosen sufficiently small, then~\eqref{G-small} holds. From inequalities \eqref{grad-uk-um}, \eqref{grad-uk'n'um-est}, \eqref{G-small},  and \eqref{sep52}, we infer that $\{\nabla u_s\}_s$ is a Cauchy sequence in measure. 

To conclude that that the function $u$ obtained in~\eqref{conv:weak} and~\eqref{conv:strong} is a desired approximable solution, we observe that it belongs to the class $\mathcal T ^{1,\vp(\cdot)}_0(\Omega)$, and that $\nabla u_s \to \nabla u$ $\hbox{a.e. in $\Omega$}$ (up to subsequences), where $\nabla u$ is understood in the sense of~\eqref{gengrad}.
Since $\{\nabla u_s\}$ is a Cauchy sequence in measure, there exist a subsequence (still indexed by $s$) and a~measurable function
$W:\Omega\to\rn$ such that $\nabla u_s\to  W$ a.e. in $\Omega.$ To motivate that $\nabla u = W$ and
\begin{equation}\label{sep63}
\chi_{\{|u|<k\}}  W \in (L^{\vp(\cdot)}(\Omega))^n \quad \hbox{for every $k>0$}
\end{equation}
it suffices to recall~\eqref{conv:weak}. Indeed, then for each fixed $k>0$, there exists a subsequence of $\{u_s\}$, still indexed by $s$, such that
\begin{equation}\label{sep60}
\lim _{s \to \infty} \nabla T_k(u_s)= \lim _{s \to \infty}
\chi_{\{|u_s|<k\}} \nabla u_s = \chi_{\{|u|<k\}} W \quad \hbox{a.e.
in $\Omega$,}
\end{equation}
and $ \lim_{s \to \infty} \nabla T_k(u_s) = \nabla T_k(u)$ 
weakly in $(L^{\vp(\cdot)}(\Omega))^n$. Therefore,  $\nabla T_k(u) = \chi_{\{|u|<k\}} W$ a.e. in $\Omega$, 
whence~\eqref{sep63} follows. Then, due to ($\A 1$) also~\eqref{Aik=A} holds, that is we have~\eqref{conv:A-ae} and~\eqref{conv:A-weak-total}. 
Due to Remark~\ref{rem:A-int} we get uniform integrability of $\{\A(x,\nabla T_k u)\}_k$, so Lebesgue's monotone convergence theorem justifies~\eqref{conv:A-strong-total-k}, where the limit is in $(L^1(\Omega))^n$ by Lemma~\ref{lem:summability}. By~\eqref{apriori2} and Vitali's convergence theorem we infer~\eqref{conv:A-strong-total-s}. \end{proof}

\section{Renormalized solutions}\label{sec:rs}

Our aim now is to analyse the measures generated by truncations of approximable solutions. 

\begin{proposition}\label{prop:sola-trunc-meas}
If $u$ is an approximable solution under assumptions of Proposition~\ref{prop:convI} and $\oppA$ is given by~\eqref{oppA}, then for every $k>0$ we have $\lambda_k:=\oppA(T_ku)\in \Mb(\Omega)\cap (W^{1,\vp(\cdot)}_0(\Omega))'$ and
\begin{equation}
\label{oppA-meas}\int_{\{|u|<k\}} \A(x,\nabla u)\cdot \nabla \phi\,dx=\int_\Omega \phi\,d\lambda_k\quad\text{for every }\ \phi\in W^{1,\vp(\cdot)}_0(\Omega)\cap L^\infty(\Omega).
\end{equation}
Then  for $k\to \infty$ we have \begin{equation}
\label{oppA2}\oppA(T_ku)\rightharpoonup{} \oppA(u)\qquad\text{ weakly-$\ast$ in the space of measures}.
\end{equation}
Moreover,  for every $k>0$ it holds $|\oppA(T_ku)|(\{|u|>k\})=0$ and for every $\phi\in C_0(\Omega)$ we have\begin{flalign}
\label{limplus}\lim_{\delta\to 0^+}&\frac 1\delta\int_{\{k-\delta\leq u\leq k\}}\A(x,\nabla u)\cdot\nabla u\,\phi\,dx=\int\phi\,d\nu^+_k,\\
\label{limminus}\lim_{\delta\to 0^+}&\frac 1\delta\int_{\{-k\leq u\leq -k+\delta\}}\A(x,\nabla u)\cdot\nabla u\,\phi\,dx=\int\phi\,d\nu^-_k\,
\end{flalign}
with $\nu^+_k=\oppA(T_ku)\mres\{u=k\}$ and $\nu^-_k=\oppA(T_ku)\mres\{u=-k\}$.
\end{proposition}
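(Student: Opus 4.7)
The proof naturally splits along the four assertions, which I would address in the stated order. For the integral representation~\eqref{oppA-meas}, the pointwise identity $\nabla T_k u=\chi_{\{|u|<k\}}\nabla u$ from~\eqref{gengrad}, combined with $(\A 4)$, produces $\A(x,\nabla T_k u)=\chi_{\{|u|<k\}}\A(x,\nabla u)$ a.e.\ in $\Omega$; inserted into the definition of $\oppA(T_k u)$, this gives~\eqref{oppA-meas} for every admissible $\phi$. The a priori estimate~\eqref{apriori2} together with lower semicontinuity of the modular under the weak convergence~\eqref{conv:A-weak-total} places $\A(\cdot,\nabla T_k u)\in L^{\wt\vp(\cdot)}(\Omega)$, so $\lambda_k\in(W^{1,\vp(\cdot)}_0(\Omega))'$.

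The core step is $\lambda_k\in\Mb(\Omega)$, which I would establish by bounding $|\langle\lambda_k,\phi\rangle|\leq C\|\phi\|_\infty$ on $\phi\in C_0^\infty(\Omega)$ and invoking Riesz representation. At the approximate level I test~\eqref{eq:approx-general} with $\phi\,\eta_{k,\delta}(u_s)$, where $\eta_{k,\delta}:\R\to[0,1]$ equals $1$ on $[-k,k]$, vanishes outside $[-k-\delta,k+\delta]$, and is affine in between, yielding
\begin{multline*}
\int_\Omega\eta_{k,\delta}(u_s)\,\A(x,\nabla u_s)\cdot\nabla\phi\,dx+\int_\Omega\phi\,\eta_{k,\delta}'(u_s)\,\A(x,\nabla u_s)\cdot\nabla u_s\,dx\\
=\int_\Omega\phi\,\eta_{k,\delta}(u_s)\,d\mu^s.
\end{multline*}
The decisive ingredient is the $O(\delta)$ annulus bound obtained by testing~\eqref{eq:approx-general} with $T_\delta(u_s-T_k u_s)$, namely $\int_{\{k<|u_s|<k+\delta\}}\A(x,\nabla u_s)\cdot\nabla u_s\,dx\leq\delta\|\mu\|_{\Mb(\Omega)}$; combined with $|\eta_{k,\delta}'|\leq 1/\delta$ and the sign $\A\cdot\nabla u_s\geq 0$ (from $(\A 3)$--$(\A 4)$), this controls the $\eta_{k,\delta}'$-term by $\|\phi\|_\infty\|\mu\|_{\Mb(\Omega)}$ uniformly in $s,\delta$. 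Sending $\delta\to 0^+$ for fixed $s$ collapses the $\nabla\phi$-integral to $\langle\oppA(T_k u_s),\phi\rangle$ (the level set $\{|u_s|=k\}$ is harmless by the Stampacchia vanishing of $\nabla u_s$ there), and subsequently $s\to 0$ via~\eqref{conv:A-strong-total-s} delivers $|\langle\lambda_k,\phi\rangle|\leq 2\|\phi\|_\infty\|\mu\|_{\Mb(\Omega)}$. Once $\|\lambda_k\|_{\Mb(\Omega)}$ is uniformly bounded in $k$, the weak-$*$ convergence~\eqref{oppA2} follows from~\eqref{conv:A-strong-total-k} on $\phi\in C_0^\infty(\Omega)$ and density of $C_0^\infty$ in $C_0$.

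For the concentration $|\lambda_k|(\{|u|>k\})=0$ I use that $\lambda_k\in\Mb\cap(W^{1,\vp(\cdot)}_0(\Omega))'$ together with Theorem~\ref{theo:decomp} to put $\lambda_k\in\MP(\Omega)$. The set $\{|u|>k\}$ is $\capP$-quasi-open by Lemma~\ref{lem:qc-open}, and for any $v\in W^{1,\vp(\cdot)}_0(\Omega)\cap L^\infty(\Omega)$ with $|v|\leq\chi_{\{|u|>k\}}$ $\capP$-quasi-everywhere, the equality $v=0$ a.e.\ on $\{|u|\leq k\}$ (q.e.\ implies a.e.) forces $\nabla v=0$ a.e.\ there by the Stampacchia property, giving
\begin{equation*}
\int_\Omega v\,d\lambda_k=\int_{\{|u|<k\}}\A(x,\nabla u)\cdot\nabla v\,dx=0.
\end{equation*}
The capacitary characterization of $|\lambda_k|$ on quasi-open sets, combined with the exhaustion Lemma~\ref{lem:exh} and the summability Lemma~\ref{lem:summability}, then forces $|\lambda_k|(\{|u|>k\})=0$.

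Finally, for~\eqref{limplus} I would test $\oppA(T_k u)=\lambda_k$ against $\phi\,\psi_\delta(T_k u)$ with $\psi_\delta(r)$ the Lipschitz ramp rising linearly from $0$ to $1$ on $[k-\delta,k]$ and equal to $1$ above $k$. The Leibniz rule splits the left-hand side into $\int\A(x,\nabla T_k u)\cdot\nabla\phi\,\psi_\delta(T_k u)\,dx$ and $\tfrac 1\delta\int_{\{k-\delta<u<k\}}\A(x,\nabla u)\cdot\nabla u\,\phi\,dx$ (using $\nabla T_k u=\nabla u$ on $\{|u|<k\}$), while the right-hand side is $\int\phi\,\psi_\delta(T_k u)\,d\lambda_k$. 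Letting $\delta\to 0^+$: $\psi_\delta(T_k u)\to\chi_{\{u\geq k\}}$ boundedly a.e., the $\nabla\phi$-term vanishes because $\A(\cdot,\nabla T_k u)\equiv 0$ on $\{u\geq k\}$, and the previous concentration step collapses the right-hand side to $\int\phi\,d(\lambda_k\mres\{u=k\})=\int\phi\,d\nu^+_k$; the symmetric ramp yields~\eqref{limminus}. Extension from $\phi\in C_0^\infty(\Omega)$ to $\phi\in C_0(\Omega)$ is afforded by the uniform bound $\tfrac 1\delta\int_{\{k-\delta<|u|<k\}}\A(x,\nabla u)\cdot\nabla u\,dx=O(1)$ obtained by testing $\lambda_k$ with $T_k u-T_{k-\delta}u$. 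The main obstacle throughout is the second step: converting a computation that naively involves $\|\nabla\phi\|_{L^{\vp(\cdot)}}$ into a dual bound in $\|\phi\|_\infty$ alone, which succeeds only because the sign of $\A\cdot\nabla u_s$ and the sharp $O(\delta)$ annulus estimate make the $\eta_{k,\delta}'$-term insensitive to $\delta$.
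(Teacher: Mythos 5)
Your argument is correct and follows essentially the same route as the paper's proof: test the approximate equations against Lipschitz cut-offs of $u_s$ to obtain a uniform $\Mb(\Omega)$-bound on $\oppA(T_k u_s)$, pass to the limit via Proposition~\ref{prop:convI}, deduce $\lambda_k\in\MP(\Omega)$ from $\lambda_k\in(W^{1,\vp(\cdot)}_0(\Omega))'\cap\Mb(\Omega)$ (Lemma~\ref{lem:cap0}), and then run the quasi-open exhaustion of $\{|u|>k\}$ through Lemmas~\ref{lem:qc-open},~\ref{lem:exh},~\ref{lem:summability} for the concentration. Your cut-off $\eta_{k,\delta}$ is supported just outside $[-k,k]$ and the annulus estimate comes from $T_\delta(u_s-T_k u_s)$, whereas the paper's $h_\delta$ lives inside $[-k,k]$ with the annulus controlled via $\sigma^\pm_\delta(u_s)$; these are equivalent. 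The one genuine procedural difference is in~\eqref{limplus}--\eqref{limminus}: you plug $\phi\,\psi_\delta(T_k u)$ directly into the already-established identity~\eqref{oppA-meas}, send $\delta\to0^+$, and use $|\lambda_k|(\{|u|>k\})=0$ to read off $\nu^+_k=\lambda_k\mres\{u=k\}$, whereas the paper extracts $\nu^\pm_k$ as weak-$*$ limits in $\Mb(\Omega)$ of the approximate-level densities $\nu^{s,\pm}_\delta$ from~\eqref{nup}--\eqref{num}, using the uniform $L^1$-bounds~\eqref{nup2}--\eqref{num2}, and only afterward identifies them. Working at the limit level is slightly cleaner because it avoids commuting the $s$- and $\delta$-limits. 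Two small cosmetic points: the claim that~\eqref{apriori2} plus lower semicontinuity places $\A(\cdot,\nabla T_k u)$ in $L^{\wt\vp(\cdot)}(\Omega)$ is circular as phrased, since the cited weak convergence~\eqref{conv:A-weak-total} already encodes that membership; and the annulus bound should carry a factor $2$ from $\limsup_s|\mu^s|\leq|\mu|$, which is immaterial.
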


\begin{proof} We prove first weak-$\ast$ convergence of measures generated by truncations of solutions and then their further properties.

\medskip

\noindent {\em {\bf Step 1.}  $\lambda_k\in\Mb(\Omega)\ $ and $\ \oppA(T_ku)\rightharpoonup{} \oppA(u)$  weakly-$\ast$ in the space of measures. }\\ For $k>\delta>0$ we define a Lipschitz functions $h_\delta,\sigma_\delta^+,\sigma_\delta^-:\R\to\R$ satisfying \[\begin{cases}h_\delta(r)=1&\ \text{ if }|r|\leq k-\delta,\\
|h_\delta'(r)|=\tfrac 1\delta&\ \text{ if }k-\delta\leq |r|\leq k,\\
h_\delta(r)=0&\ \text{ if }|r|\geq k.
\end{cases}\qquad\qquad 
\begin{cases}\sigma_\delta^+(r)=0&\ \text{ if }r\leq k-\delta,\\
(\sigma_\delta^+)'(r)=\tfrac 1\delta&\ \text{ if }k-\delta\leq r\leq k,\\
\sigma_\delta^+(r)=1&\ \text{ if }r\geq k,
\end{cases}
\]
and $\sigma_\delta^-(r)=\sigma_\delta^+(-r)$. We note that if $\{u_s\}$ is an approximate sequence from Definition~\ref{def:sola} solving~\eqref{eq:approx-general} with $\mu^s$ being bounded and smooth function, $\phi\in C_0^\infty(\Omega)$, then $h_\delta(u_s)\phi,\sigma_\delta^+(u_s),\sigma_\delta^-(u_s)\in W^{1,\vp(\cdot)}_0(\Omega)\cap L^\infty(\Omega)$ are admissible test functions in~\eqref{eq:sola-decomp}. By testing~\eqref{eq:approx-general} against $h_\delta(u_s)\phi$ with  we get\begin{flalign*}
\int_\Omega h_\delta(u_s)\,\A(x,\nabla u_s)\cdot \nabla\phi\,dx &=\int_\Omega \phi\mu^s h_\delta(u_s)\,dx-\int_\Omega h'_\delta(u_s)\,\A(x,\nabla u_s)\cdot \nabla u_s\,\phi\,dx\\
&=\int_\Omega \phi\,d\lambda_\delta^s+\int_\Omega \phi\,d\gamma_\delta^{s,+}-\int_\Omega \phi\,d\gamma_\delta^{s,-},
\end{flalign*} where\begin{flalign}
\lambda_\delta^s&=\mu^sh_\delta(u_s),\nonumber\\
\nu_\delta^{s,+}&=\tfrac 1\delta\mathds{1}_{\{k-\delta\leq u_s\leq k\}}\A(x,\nabla u_s)\cdot\nabla u_s,\label{nup}\\
\nu_\delta^{s,-}&=\tfrac 1\delta\mathds{1}_{\{- k\leq u_s\leq -k+\delta\}}\A(x,\nabla u_s)\cdot\nabla u_s.\label{num}
\end{flalign}
Observe that \begin{equation*}
\lambda_\delta^s,\nu_\delta^{s,+},\nu_\delta^{s,-}\in L^1(\Omega).
\end{equation*} Indeed,
\[\|\lambda_\delta^s\|_{L^1(\Omega)}\leq\int_\Omega|\mu^s|\,|h_\delta(u_s)|\,dx\leq \int_\Omega|\mu^s| \,dx\leq 2\|\mu\|_{\Mb(\Omega)}.\] 
To estimate $\|\nu_\delta^{s,+}\|_{L^1(\Omega)}$ and $\|\nu_\delta^{s,-}\|_{L^1(\Omega)}$, we test~\eqref{eq:approx-general} against $\sigma_\delta^+(u_s)$ (respectively $\sigma_\delta^-(u_s)$) and obtain
\begin{flalign}
\|\nu_\delta^{s,+}\|_{L^1(\Omega)}&\leq \frac 1\delta \int_{\{k-\delta\leq u_s\leq k\}}\A(x,\nabla u_s)\cdot\nabla u_s\,dx=\int_\Omega \mu^s\sigma^+_\delta(u_s)\,dx\leq 2\|\mu\|_{\Mb(\Omega)},\label{nup2}\\
\|\nu_\delta^{s,-}\|_{L^1(\Omega)}&\leq \frac 1\delta \int_{\{-k\leq u_s\leq -k+\delta\}}\A(x,\nabla u_s)\cdot\nabla u_s\,dx=\int_\Omega \mu^s\sigma^-_\delta(u_s)\,dx\leq 2\|\mu\|_{\Mb(\Omega)}.\label{num2}\end{flalign}
In the end we have that
\[\left\|-\dv \Big(h_\delta(u_s)\,\A(x,\nabla u_s)\Big)\right\|_{L^1(\Omega)}\leq 6\|\mu\|_{\Mb(\Omega)}.\]
Due to Remark~\ref{rem:A-int} and ($\A$2) we get uniform integrability of $\{\A(x,\nabla (T_k (u_s)))\}_k$, so Lebesgue's monotone convergence theorem justifies we can let $\delta\to 0$ getting 
\[|h_\delta(u_s)\,\A(x,\nabla u_s)| \to |\A(x,\nabla( T_k( u_s)))|\quad \text{strongly in }\ L^{1}(\Omega).\]
Therefore $\oppA(T_ku_s)\in\Mb(\Omega)$ and $\|\oppA(T_ku_s) \|_{\Mb(\Omega)}\leq 6\|\mu\|_{\Mb(\Omega)}$, where the bound is uniform with respect to $s$ and $k$. Consequently, the use of Proposition~\ref{prop:convI} enables to infer that also that $\oppA(T_ku)\in\Mb(\Omega)$, $\|\oppA(T_ku) \|_{\Mb(\Omega)}\leq 6\|\mu\|_{\Mb(\Omega)}$, and -- finally -- \eqref{oppA2}. By Remark~\ref{rem:top} we can extend the family of admissible test functions to get \eqref{oppA-meas} and conclusion that  $\oppA(T_ku)\in\Mb(\Omega)\cap (W^{1,\vp(\cdot)}_0(\Omega))'$.

\medskip

\noindent {\em {\bf Step 2.} Existence of a diffuse measure $\vartheta\in\MP(\Omega)$, such that\[\text{ $\vartheta\mres\{|u|<k\}= \oppA(T_\mathlcal{l}u)\mres\{|u|<k\}\quad $ for every $k>0$ and every $\el\geq k$.} \]}
Lemma~\ref{lem:summability} ensures that $\phi\in W^{1,\vp(\cdot)}_0(\Omega)\cap L^\infty(\Omega)$ belongs to $L^1(\Omega,\vartheta)$ with any $\vartheta\in\MP(\Omega)$.

Note that $\lambda_\el\mres\{|u|<k\}=\lambda_k\mres\{|u|<k\}$ for every $\el\geq k>0$. Since the set $\{|u|<k\}$ is $\capP$-quasi open, Lemmas~\ref{lem:qc-open} and~\ref{lem:exh} ensure that there exists an increasing sequence $\{w_j\}$ of nonnegative functions in $W^{1,\vp(\cdot)}_0(\Omega)$ which converges to $\mathds{1}_{\{|u|<k\}}$ $\capP$-quasi everywhere in $\Omega$. Then $w_j=0$ a.e. in $\{|u|\geq k\}$. If $\psi\in C_0^\infty$, then $\phi=w_j\psi\in W^{1,\vp(\cdot)}_0(\Omega)\cap L^\infty(\Omega)$ is an admissible test function in \eqref{oppA-meas}, so  for every $\el\geq k$ we get
\[\int_\Omega w_j\psi\,d\lambda_k=\int_{\{|u|\leq k\}} \A(x,\nabla u)\cdot\nabla( w_j\psi)\,dx=\int_{\{|u|\leq \el\}} \A(x,\nabla u)\cdot\nabla( w_j\psi)\,dx=\int_\Omega w_j\psi\,d\lambda_\el.\]
Passing to the limit with $j\to\infty$ we get 
\[\int_{\{|u|< k\}} \psi\,d\lambda_k= \int_{\{|u|< k\}} \psi\,d\lambda_\el\quad\text{ for every $\ \psi\in C_0^\infty$},\]
so of course $\lambda_\el\mres\{|u|<k\}=\lambda_k\mres\{|u|<k\}$. Consequently, there exists a unique Borel measure $\vartheta$, such that $\vartheta\mres\{|u|=+\infty\}=0$ and $\vartheta\mres\{|u|<k\}=\lambda_\el\mres\{|u|<k\}$ for every $k>0$ and every $\el\geq k$. As $\lambda_k$ vanishes on every set of zero capacity $\capP$, so does $\vartheta$. By~\eqref{oppA2} the measures $|\lambda_k|$ are uniformly bounded with respect to $k$, so $\{|\vartheta|(\{|u|<k\})\}_k$ is bounded. In turn $|\vartheta|(\Omega)<\infty$ and -- finally -- we infer that $\vartheta\in\MP(\Omega)$.

\medskip

\noindent {\em {\bf Step 3.} $\oppA(T_ku)\mres\{|u|>k\}=0$ . } \\
Lemma~\eqref{lem:sola-qc} gives that $u$ is $\capP$-quasicontinuous, thus the set $\{|u|>k\}$ is $\capP$-quasi open. Fix arbitrary open $V\subset\Omega$. By Lemma~\ref{lem:exh}, there exists an increasing sequence $\{\widehat w_j\}$ of nonnegative functions in $W^{1,\vp(\cdot)}_0(\Omega)$ which converges to $\mathds{1}_{V\cap\{|u|<k\}}$ $\capP$-quasi everywhere in $\Omega$. Then $\widehat w_j=0$ a.e. in $\{|u|\leq k\}$ and we can test  \eqref{oppA-meas} against $ w_j \in W^{1,\vp(\cdot)}_0(\Omega)\cap L^\infty(\Omega)$. We obtain
\[\int_\Omega w_j\,d\lambda_k=\int_{\{|u|\leq k\}} \A(x,\nabla u)\cdot\nabla( w_j )\,dx=0.\]
Letting $j\to\infty$ we get that $(\lambda_k\mres \{|u|>k\})(V)=0$. Since $V$ was arbitrary open set, we have what was claimed.

\medskip

\noindent {\em {\bf Step 4.} Limits. } 
 Since we have~\eqref{nup2} and~\eqref{num2}, we get \eqref{limplus} and \eqref{limminus} for any $\phi\in C_0(\Omega)$, with some nonnegative $\nu_k^+,\nu_k^-\in \Mb(\Omega)$. They have the form given in the claim, because  $\oppA(T_ku)\in \Mb(\Omega)\cap (W^{1,\vp(\cdot)}_0(\Omega))'$ has properties proven in Steps~3 and~4.
\end{proof}

\begin{proposition}[Existence of renormalized solutions]\label{prop:ex-ren}Let $\Omega$ be bounded open domain in $\rn$, $\A:\Omega\times \rn\to\rn$ satisfy $(\A 1)$--$(\A 4)$, $\vp\in\Phi_c(\Omega)$ satisfy (aInc)$_p$, (aDec)$_q$, (A0), (A1), and (A2), and $\mu\in\Mb(\Omega)$. Then there exists at least one renormalized solution to~\eqref{eq:main} (Definition~\ref{def:rs}).
\end{proposition}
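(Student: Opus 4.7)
The plan is to take the approximable solution $u$ produced by Proposition~\ref{prop:convI} and verify that it satisfies Definition~\ref{def:rs} with the decomposition of $\mu$ provided by Lemma~\ref{lem:basic-decomp} together with the measures constructed in Proposition~\ref{prop:sola-trunc-meas}. Condition (i) is essentially given for free: estimate~\eqref{apriori2} uniform in $s$, together with \eqref{conv:A-weak-total}, yields $\A(\cdot,\nabla T_k u)\in L^{\wt\vp(\cdot)}(\Omega)$ by lower semicontinuity of the modular along weakly convergent sequences.

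For~(ii), let $\{u_s\}\subset W^{1,\vp(\cdot)}_0(\Omega)$ be the approximate sequence from Definition~\ref{def:sola} and let $h\in W^{1,\infty}(\R)$ have $\mathrm{supp}\,h'\subset[-k,k]$ and $\phi\in C_0^\infty(\Omega)$. Testing~\eqref{eq:approx-general} against $h(u_s)\phi\in W^{1,\vp(\cdot)}_0(\Omega)\cap L^\infty(\Omega)$ yields
\begin{equation}\label{plan:hphi}
\int_\Omega \A(x,\nabla u_s)\cdot\nabla u_s\, h'(u_s)\phi\,dx+\int_\Omega \A(x,\nabla u_s)\cdot\nabla\phi\, h(u_s)\,dx=\int_\Omega h(u_s)\phi\,d\mu^s.
\end{equation}
Because $h'$ is supported in $[-k,k]$, the first integral equals $\int_\Omega \A(x,\nabla T_k u_s)\cdot\nabla T_k u_s\,h'(T_k u_s)\phi\,dx$, and I would pass to the limit using~\eqref{conv:weak}, \eqref{conv:A-weak-total}, \eqref{conv:A-strong-total-s} combined with Lemma~\ref{lem:TM1} (since $h'(T_k u_s)\phi\to h'(T_k u)\phi$ a.e. and boundedly by~\eqref{conv:ae}); this gives the first term of~\eqref{eq:renorm-decomp}. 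The second term on the left of~\eqref{plan:hphi} is handled similarly: $h(u_s)\nabla\phi$ is bounded and converges a.e., and $\A(\cdot,\nabla u_s)\cdot\nabla\phi$ can be rewritten using that $h$ is constant outside $[-k,k]$ for some large $k$, so that only truncated gradients matter, and~\eqref{conv:A-strong-total-s} applies.

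The main obstacle is the right-hand side: identifying the limit of $\int h(u_s)\phi\,d\mu^s$ as $\int h(u)\phi\,d\mup+h(+\infty)\int\phi\,d\musp-h(-\infty)\int\phi\,d\musm$ together with the support properties. My plan is to use Proposition~\ref{prop:sola-trunc-meas}: setting $\mup:=\vartheta$ (which lies in $\MP(\Omega)$ by Step~2 there), and passing to the limit $\delta\to0$ first in~\eqref{plan:hphi} with $h=\sigma_\delta^\pm$-type cut-offs to extract the candidate singular measures as $\nu_k^\pm$, and then $k\to\infty$ using the uniform bound $\|\oppA(T_k u)\|_{\Mb(\Omega)}\leq 6\|\mu\|_{\Mb(\Omega)}$ from Proposition~\ref{prop:sola-trunc-meas}, so that $\nu_k^+\rightharpoonup\musp$ and $\nu_k^-\rightharpoonup\musm$ weakly-$\ast$. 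The decomposition $\mu=\mup+\musp-\musm$ is then forced by~\eqref{oppA2} applied to $\phi\in C_0(\Omega)$, and the support properties follow because $\vartheta\mres\{|u|=\infty\}=0$ by construction, while $\nu_k^+$ is concentrated on $\{u=k\}\subset\{u>k-1\}$ and symmetrically for $\nu_k^-$, so the weak-$\ast$ limits are supported in $\bigcap_{k>0}\{u>k\}$ and $\bigcap_{k>0}\{u<-k\}$ respectively. Finally, since $h(u)\in L^\infty(\Omega,\mup)$ by Lemma~\ref{lem:summability} (as $u$ admits a $\capP$-quasicontinuous representative by Lemma~\ref{lem:sola-qc} and $h$ is Lipschitz), the RHS of~\eqref{plan:hphi} indeed converges to the desired limit, and the density argument in Remark~\ref{rem:top} extends the identity from $\phi\in C_0^\infty(\Omega)$ to the claimed class.
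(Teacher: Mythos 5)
Your verification of (i) is fine, but your route for (ii) diverges from the paper's and has a genuine gap in the passage $s\to0$. You test the approximate problems~\eqref{eq:approx-general} against $h(u_s)\phi$ and then try to send $s\to0$ in~\eqref{plan:hphi}. The troublesome piece is the energy term
\[
\int_\Omega \A(x,\nabla T_k u_s)\cdot\nabla T_k u_s\,h'(T_k u_s)\phi\,dx .
\]
The convergences you invoke -- \eqref{conv:weak} (weak in $(L^{\vp(\cdot)})^n$), \eqref{conv:A-weak-total} (weak in $(L^{\wt\vp(\cdot)})^n$), \eqref{conv:A-strong-total-s} (strong in $(L^1)^n$) -- together with Lemma~\ref{lem:TM1} do not give convergence of this integral. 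The scalar product of two weakly convergent sequences need not converge, and Lemma~\ref{lem:TM1} requires one factor to converge strongly in $L^1$ while the other is uniformly bounded; here the bounded factor is only $h'(T_k u_s)\phi$, and the other factor $\A(x,\nabla T_k u_s)\cdot\nabla T_k u_s$ is merely bounded in $L^1$ by~\eqref{apriori1}, not shown to converge there (nor to be equi-integrable). In the classical theory this step requires a separate, nontrivial argument (e.g.\ strong $L^{\vp(\cdot)}$ convergence of $\nabla T_k u_s$, or equi-integrability of the truncated energy); Proposition~\ref{prop:convI} provides neither. A similar, though repairable, issue appears in your second term: $h$ is constant only outside a compact interval, so $\A(x,\nabla u_s)\cdot\nabla\phi\,h(u_s)$ is not literally a truncated quantity, and a uniform tail estimate via Remark~\ref{rem:A-int} and~\eqref{Ah0} is needed before~\eqref{conv:A-strong-total-s} applies.

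The paper avoids the $s$-limit in the renormalized identity entirely. Proposition~\ref{prop:sola-trunc-meas} already furnishes the limit equation~\eqref{oppA-meas} for the truncations, $\lambda_k=\oppA(T_ku)=\vartheta\mres\{|u|<k\}+\nu^+_k-\nu^-_k$, valid against every $\phi\in W^{1,\vp(\cdot)}_0(\Omega)\cap L^\infty(\Omega)$. The paper tests this against $h(T_{k+1}(u))\phi$ and sends only $k\to\infty$. Because ${\rm supp}\,h'\subset[-M,M]$ is fixed, the integrand $\A(x,\nabla u)\cdot\nabla u\,h'(u)\phi = \A(x,\nabla T_M u)\cdot\nabla T_M u\,h'(u)\phi$ is a single integrable function independent of $k$, so dominated convergence applies with no equi-integrability issue; the right-hand side is then matched to $\mup,\musp,\musm$ by the uniqueness in Remark~\ref{rem:decomp} together with~\eqref{oppA2},~\eqref{limplus},~\eqref{limminus}. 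Your final paragraph correctly identifies Proposition~\ref{prop:sola-trunc-meas} as the key input, but the iterated $\sigma^\pm_\delta$-limit scheme you sketch is essentially a restatement of the proof of that proposition and does not repair the $s\to0$ gap, since you still hold $h$ fixed in~\eqref{plan:hphi}. Replacing the $s\to0$ step by a direct test of~\eqref{oppA-meas} closes the gap and reproduces the paper's argument. (A minor remark: Definition~\ref{def:rs} already asks for the identity only for $\phi\in C^\infty_0(\Omega)$, so the closing density step is unnecessary.)
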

\begin{proof} By Proposition~\ref{prop:convI} there exists an approximable solution $u\in\mathcal{T}^{1,\vp(\cdot)}_0(\Omega)$ to~\eqref{eq:main}. We shall show that actually it is also a renormalized
solution. Due to Proposition~\ref{prop:sola-trunc-meas}, measure  $\mu$ can be seen as the weak-$\ast$ limit of $\{\lambda_k\}$, which are expressed as\[\lambda_k=\oppA(T_ku)=\vartheta\mres\{|u|<k\}+\nu_k^+-\nu_k^-\]
with $\vartheta\in\MP(\Omega)$, $\nu_k^+,\nu_k^-\in \big( \Mb(\Omega)\setminus\MP(\Omega)\big)\cup\{0\}$ being such that $\nu_k^+=\nu_k^+\mres\{u=k\}$ and $\nu_k^-=\nu_k^-\mres\{u=-k\}$.  Given $h\in W^{1,\infty}(\R)$ having $h'$ with compact support, $\phi\in C_0^\infty(\Omega)$, and arbitrary $k>0$, function $h(T_{k+1}(u))\phi\in W^{1,\vp(\cdot)}_0(\Omega)\cap L^\infty(\Omega)$, so we can test the equation~\eqref{oppA-meas} to get
\begin{flalign}\label{line1-s-r}
\int_{\{|u|\leq k\}}\A(x,\nabla u)\cdot&\nabla \big(h(T_{k+1}(u))\phi\big)\,dx=
\int_{\{|u|\leq k+1\}}h(u)\phi\,d\lambda_k\\
&=
\int_{\{|u|< k\}}h(u)\phi\,d\vartheta+h(k)\int_\Omega \phi\,d\nu_k^+-h(-k)\int_\Omega \phi\,d\nu_k^-.\label{line2-s-r}
\end{flalign}
We need to justify letting $k\to \infty$. We start with the left-hand side of~\eqref{line1-s-r} by having a look on
\[\A(x,\nabla u)\cdot \nabla \big(h(u)\phi\big)=\A(x,\nabla u)\cdot\nabla u\,(h'(u)\phi)+\A(x,\nabla u)\cdot\nabla \phi\, h(u).\] If we prove that both terms on the right-hand side in the last display are integrable, Lebesgue's dominated convergence theorem will give the desired conclusion. Recall that $u\in\mathcal{T}^{1,\vp(\cdot)}_0(\Omega)$ and satisfy~\eqref{apriori1}, so by Proposition~\ref{prop:apriori-basic} and Lemma~\ref{lem:summability}, $\A(\cdot,\nabla u)\in (L^1(\Omega))^n$. Moreover, $h'$ is bounded and ${\rm supp}\,h'\subset[-M,M]$ for some $M>0$, so  \[\A(\cdot,\nabla u)\cdot\nabla u\,h'(u)=\A(\cdot,\nabla T_Mu)\cdot\nabla(T_M u)\,h'(u)\] is integrable by~\eqref{apriori1}. For the second term we see that \[\|\A(x,\nabla u)\cdot\nabla \phi\, h(u)\|_{L^1(\Omega)}\leq \|\A(x,\nabla u)\|_{L^1(\Omega)}\,\|\nabla \phi\|_{L^\infty(\Omega)}\,\|h\|_{L^\infty(\Omega)},\] so it suffices to use the same arguments as before. Therefore~\eqref{line1-s-r} becomes the left-hand side of \eqref{eq:renorm-decomp} in the limit.  By Remark~\ref{rem:decomp} the following decomposition 
\[\mu=\mup+\musp-\musm,\qquad \mup\in\MP(\Omega),\quad 0\leq \musp,\musm\in\big( \Mb(\Omega)\setminus\MP(\Omega)\big)\cup\{0\}\]
is unique. By~\eqref{oppA2} it holds that $\vartheta\mres\{|u|<k\}\rightharpoonup\oppA(u)$. Note that it is also~\eqref{oppA2} to justify testing against $W^{1,\vp(\cdot)}_0(\Omega)\cap L^\infty(\Omega)$-function. To conclude we use Lebesgue's dominated convergence theorem  in~\eqref{line2-s-r}. To motivate the convergence of the first term we note that we can split the first term to positive and negative part, whose majorants are integrable due to Lemma~\ref{lem:summability}. For the remaining two terms it suffices to recall that $h$ is bounded and constant in infinities. By~\eqref{limplus} one has $\nu_k^+\rightharpoonup\musp$ with ${\rm supp}\,\musp\subset\cap_{k>0}\{u>k\}$, and by~\eqref{limminus} also $\nu_k^-\rightharpoonup\musm$ with ${\rm supp}\,\musm\subset\cap_{k>0}\{u<-k\}$. 
\end{proof}

\section{Uniqueness in problems with diffuse measure data }\label{sec:uniq} The previous results worked for a general measure data problems. Here we restrict to diffuse measures to provide uniqueness.


\begin{proposition}[Uniqueness of approximable solutions]\label{prop:uniq-sola}
Under assumptions of Proposition~\ref{prop:convI}, if $\muvp\in\MP(\Omega)$ and $v^j,$ $j=1,2$, are approximable solutions to~\eqref{eq:main} with $\mu_{\vp(\cdot)}\in\MP(\Omega)$, i.e. $v^1,v^2$ satisfy~\eqref{eq:sola-decomp} with $f,G$ as in Definition~\ref{def:sola}, then $v^1=v^2$ a.e. in $\Omega$.
\end{proposition}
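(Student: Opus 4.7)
The plan is to compare the two approximable solutions $v^1,v^2$ via the renormalized formulation from Proposition~\ref{prop:ex-ren}, exploiting the fact that diffuseness of $\mu$ kills the singular contributions. By Theorem~\ref{theo:decomp}, write $\mu=f-\dv G$ with $f\in L^{1}(\Omega)$ and $G\in (L^{\wt\vp(\cdot)}(\Omega))^n$; by Proposition~\ref{prop:ex-ren} each $v^j$ is a renormalized solution, and because $\mu\in\MP(\Omega)$ the uniqueness statement of Lemma~\ref{lem:basic-decomp} yields $\musp=\musm=0$ in its decomposition. Thus \eqref{eq:renorm-decomp} reduces to
\[\int_{\Omega}\A(x,\nabla v^{j})\cdot\nabla v^{j}\,h'(v^{j})\,\phi\,dx+\int_{\Omega}h(v^{j})\,\A(x,\nabla v^{j})\cdot\nabla\phi\,dx=\int_{\Omega}h(v^{j})\,\phi\,d\mu.\]
A first technical preparation is to enlarge the class of $\phi$ from $C^{\infty}_{0}(\Omega)$ to $W^{1,\vp(\cdot)}_{0}(\Omega)\cap L^{\infty}(\Omega)$: this uses density of smooth functions in the modular topology (Remark~\ref{rem:top}), the well-definedness of $\int h(v^{j})\phi\,d\mu$ via Lemma~\ref{lem:summability}, and the uniform bound on $\A(\cdot,\nabla T_{k}v^{j})$ in $L^{\wt\vp(\cdot)}(\Omega)$ granted by Propositions~\ref{prop:apriori-basic} and~\ref{prop:convI}.

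A second preparation is the tail-energy decay
\[\lim_{k\to\infty}\int_{\{k<|v^{j}|<k+1\}}\A(x,\nabla v^{j})\cdot\nabla v^{j}\,dx=0,\qquad j=1,2,\]
which I would derive by testing the approximating problems~\eqref{eq:approx-general} against the $\sigma_{\delta}^{\pm}$-type cutoffs already used in the proof of Proposition~\ref{prop:sola-trunc-meas}, letting $s\to 0$ via Proposition~\ref{prop:convI}, and bounding the resulting right-hand side by $|\mu|(\{|v^{j}|>k\})$. A Chebyshev-type estimate combined with~\eqref{apriori1} gives $\capP(\{|v^{j}|>k\})\leq c\,k^{1-p}$, hence $\capP(\{|v^{j}|=\infty\})=0$ and diffuseness of $\mu$ forces $|\mu|(\{|v^{j}|>k\})\to 0$.

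With these ingredients I insert into the renormalized equations for $v^{1}$ and $v^{2}$ the choices $h=h_{m}$ (a Lipschitz cutoff equal to $1$ on $[-m,m]$ and supported in $[-m-1,m+1]$) and $\phi=T_{k}(v^{1}-v^{2})\in W^{1,\vp(\cdot)}_{0}(\Omega)\cap L^{\infty}(\Omega)$, and then subtract. Letting $m\to\infty$ with $k$ fixed, the $h'_{m}$-term vanishes by the tail decay; the $h_{m}\A\cdot\nabla\phi$-term converges to $\int_{\Omega}(\A(x,\nabla v^{1})-\A(x,\nabla v^{2}))\cdot\nabla T_{k}(v^{1}-v^{2})\,dx$ by Lebesgue dominated convergence (the integrand being supported on $\{|v^{1}-v^{2}|<k\}$, on which both $\A(\cdot,\nabla v^{j})$ are controlled by Remark~\ref{rem:A-int} applied to truncations of height $\|v^{3-j}\|_{L^{\infty}(\{|v^{1}-v^{2}|<k\})}+k$); and the right-hand side $\int_{\Omega}(h_{m}(v^{1})-h_{m}(v^{2}))T_{k}(v^{1}-v^{2})\,d\mu$ vanishes since the integrand converges $\mu$-a.e.\ to $0$ (using the $\mu$-negligibility of $\{|v^{j}|=\infty\}$) and is dominated by $2k$ in a finite measure.

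What remains is
\[\int_{\{|v^{1}-v^{2}|<k\}}\bigl(\A(x,\nabla v^{1})-\A(x,\nabla v^{2})\bigr)\cdot\bigl(\nabla v^{1}-\nabla v^{2}\bigr)\,dx=0,\]
and strict monotonicity $(\A 3)$ forces $\nabla v^{1}=\nabla v^{2}$ almost everywhere on $\{|v^{1}-v^{2}|<k\}$ for every $k>0$. Hence $T_{k}(v^{1}-v^{2})\in W^{1,\vp(\cdot)}_{0}(\Omega)$ has vanishing gradient, so by the Sobolev--Poincar\'e inequality~\eqref{in:Sob} it is identically zero, whence $v^{1}=v^{2}$ a.e.\ in $\Omega$. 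The hardest step will be the tail-energy decay together with the extension of admissible test functions, specifically the $L^{1}$-domination of the middle term uniformly in $m$ and the bookkeeping of $\capP$-quasicontinuous representatives required to evaluate $\mu$-integrals of bounded Sobolev functions.
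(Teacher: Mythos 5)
Your overall strategy (renormalized formulation plus diffuseness killing the singular parts, tail--energy decay, monotonicity) is a legitimate alternative to the paper's argument, but as written it has a genuine gap at its central step: the choice $\phi=T_{k}(v^{1}-v^{2})$ is not known to be admissible. Membership in $W^{1,\vp(\cdot)}_{0}(\Omega)\cap L^{\infty}(\Omega)$ would require $\chi_{\{|v^{1}-v^{2}|<k\}}(\nabla v^{1}-\nabla v^{2})\in (L^{\vp(\cdot)}(\Omega))^{n}$, and nothing controls the generalized gradients $\nabla v^{j}$ on the set $\{|v^{1}-v^{2}|<k\}$, which may contain points where \emph{both} $v^{1}$ and $v^{2}$ are simultaneously large; the a priori bounds~\eqref{apriori1} only control $\nabla T_{l}v^{j}$, not $\nabla v^{j}$ on level sets of the difference. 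The same problem invalidates your dominated-convergence step for the middle term: the quantity $\|v^{3-j}\|_{L^{\infty}(\{|v^{1}-v^{2}|<k\})}$ is in general infinite, so the proposed majorant via Remark~\ref{rem:A-int} does not exist, and even the absolute convergence of the limiting integral $\int_{\{|v^{1}-v^{2}|<k\}}(\A(x,\nabla v^{1})-\A(x,\nabla v^{2}))\cdot(\nabla v^{1}-\nabla v^{2})\,dx$ is not established. This is precisely the difficulty that classical uniqueness proofs for renormalized solutions circumvent by using doubly truncated test functions such as $T_{k}(T_{m}v^{1}-T_{m}v^{2})$ (or products of cutoffs in both solutions) and by splitting the domain into $\{|v^{1}|\le m,\,|v^{2}|\le m\}$ and its complement, where the cross terms are controlled exactly by the tail--energy decay you prepared; without that bookkeeping the subtraction argument does not close.

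For comparison, the paper's proof sidesteps all of this by working at the level of the approximate problems: it uses Theorem~\ref{theo:decomp} to write $\mu=f-\dv G$, approximates $f$ in $L^{1}(\Omega)$ and $G$ in $(L^{\wt\vp(\cdot)}(\Omega))^{n}$, tests the \emph{difference of the two approximate equations} with $T_{t}(T_{l}(v^{1}_{s})-T_{l}(v^{2}_{s}))$ -- admissible because the approximate solutions $v^{j}_{s}$ lie in $W^{1,\vp(\cdot)}_{0}(\Omega)$ -- shows the right-hand side vanishes as $s\to0$ by the strong convergences of the data and the uniform bounds on $\nabla T_{l}(v^{j}_{s})$, and then passes to the limit on the nonnegative left-hand side using the a.e.\ convergence of gradients from Proposition~\ref{prop:convI}. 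If you want to keep your renormalized route, you must replace $T_{k}(v^{1}-v^{2})$ by admissible truncated test functions and carry out the $m$-splitting with the tail decay; otherwise adopt the approximate-level comparison, which makes both the admissibility and the domination issues disappear.
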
 \begin{proof}
 We suppose $v^1$ and $v^2$ are  solutions obtained as limits of different approximate problems and prove that they have to be equal almost everywhere. By Theorem~\ref{theo:decomp} for every $\mu\in\MP(\Omega)$ there exist $f\in L^1(\Omega)$ and $G\in (L^{\wt\vp(\cdot)}(\Omega))^n$, such that $\mu=f-\dv G$ in the sense of distributions. Using notation from~\eqref{eq:approx-general}, without loss of the generality we can assume that $f,G$ are obtained as limits of approximate sequences $\{f^i_s\}$ in 
$C_0^\infty(\Omega)$, $i=1,2$, satisfying  
\begin{equation}
\label{fs} \text{$f^i_s\to f$ in $L^1(\Omega)\qquad$ and $\qquad \|f^i_s\|_{L^1(E)}\nearrow\|f\|_{L^1(E)}\ \ $ for measurable $\ \ E\subset\Omega$}\end{equation} and  $\{ {G^i_s}\}$ in 
$C_0^\infty(\Omega)$, $i=1,2$,  such that 
\begin{equation}
\label{Gs} G^i_s\to G\ \text{ strongly in }\ (L^{\wt\vp(\cdot)}(\Omega))^n\qquad\text{and}\qquad \vr_{\wt\vp, E}(|G^i_s|)\leq 2 \vr_{\wt\vp, E}(|G|)
\end{equation} on measurable $E\subset\Omega$. Of course then \[\mu^{i,s}=f^i_s-\dv G^i_s\rightharpoonup\mu\qquad\text{ weakly-$*$ in the space of measures}.\] Within this choice of $f^i_s$ and $G^i_s$ we consider the approximate problems  
\begin{equation}
\label{eq:approx-diffuse}\begin{cases}
-\dv\A(x,\nabla v^i_s) =f^i_s-\dv G^i_s &\text{in }\ \Omega,\\
 v^i_s=0&\text{on }\partial \Omega\end{cases}
\end{equation}
and the approximable solution $v^i$ is defined as an a.e. limit of such weak solutions $v^i_s$. The aim is to show that $v^1= v^2$. We fix arbitrary $t,l>0$, use $\phi=T_t(T_l(v^1_s)-T_l(v^2_s))\in W_0^{1,\vp(\cdot)}(\Omega)\cap L^\infty(\Omega)$  as a test function in both~\eqref{eq:sola-decomp} and subtract the equations to obtain for every $s>0$\begin{flalign}\nonumber
L_s=&\int_{\{|T_l(v^1_s)-T_l(v^2_s)|\leq t\}}(\A(x,\nabla v^1_s)- \A(x,\nabla v^2_s) )\cdot( \nabla v^1_s-\nabla v^2_s)\,dx\\
&=\int_\Omega (f^1_s-f^2_s)T_t(T_l(v^1_s)-T_l(v^2_s))\,dx+\int_\Omega (G^1_s-G^2_s)\cdot\nabla T_t(T_l(v^1_s)-T_l(v^2_s))\,dx\ =\ R_s^1+R_s^2.
\label{diff:u-bu}
\end{flalign} The right-hand side above tends to $0$. Indeed, the convergence of $R_s^1$ holds because $|T_t(T_lv^1_s-T_lv^2_s)|\leq t$ and for $s\to 0$ we have $ f^1_s-f^2_s\to 0$ in $L^1(\Omega)$. As for $R_s^2$ it suffices to note that
\begin{flalign*}|R_s^2|&=\left|\int_{\{|T_l(v^1_s)-T_l(v^2_s)|\leq t\}}(G^1_s-G^2_s)\cdot\nabla T_lv^1_s\,dx-\int_{\{|T_l(v^1_s)-T_l(v^2_s)|\leq t\}}(G^1_s-G^2_s)\cdot\nabla T_l(v^2_s)\,dx\right|\\
&\leq \left|\int_\Omega(G^1_s-G^2_s)\cdot\nabla T_l(v^1_s)\,dx\right|+\left|\int_\Omega (G^1_s-G^2_s)\cdot\nabla T_l(v^2_s) \,dx\right|\\
&\leq 2\| G^1_s-G^2_s\|_{L^{\wt\vp(\cdot)}(\Omega)}\|\nabla T_l(v^1_s)\|_{L^{\vp(\cdot)}(\Omega)}+2\| G^1_s-G^2_s\|_{L^{\wt\vp(\cdot)}(\Omega)}\|\nabla T_l(v^2_s)\|_{L^{\vp(\cdot)}(\Omega)}\\
&\leq c\| G^1_s-G^2_s\|_{L^{\wt\vp(\cdot)}(\Omega)},
\end{flalign*} where we used that weak convergence of the $\{\nabla T_l(v^j_s)\}_s$ $(j=1,2)$ in $(L^{\vp(\cdot)}(\Omega))^n$, which in particular implies uniform boundedness of $\{\|\nabla T_l(v^j_s)\|_{L^{\vp(\cdot)}(\Omega)}\}_s$ $(j=1,2)$ and recalled that the strong convergence of $(G^1_s-G^2_s)\to 0$ in $L^{\wt\vp(\cdot)}(\Omega)$. The left-hand side of~\eqref{diff:u-bu}  is nonnegative due to the monotonicity of $\A$ 
 Moreover, as $R_s^1+R_s^2\to 0$, we get
\begin{flalign*}0\leq &
\int_{\{|T_lv^1-T_lv^2 |\leq t\}}(\A(x,\nabla v^1)-\A(x,\nabla v^2 ))\cdot( \nabla v^1 -\nabla v^2 )\,dx\\
\leq& \limsup_{s\to 0}L_s= \limsup_{s\to 0}\,(R_s^1+R_s^2)= 0.\end{flalign*} 
Consequently,  $\nabla v^1 =\nabla v^2$ a.e. in $\{|T_l( v^1) -T_l( v^2) |\leq t\}$ for every $t,l>0$, and so \begin{equation}
\label{nau=nabu}\nabla v^1 =\nabla v^2\quad\text{ a.e. in }\Omega. 
\end{equation} 
Given the boundary value also $v^1= v^2$ a.e. in $\Omega$. 
\end{proof}

\section{Main proof}\label{sec:main-proof}
\begin{proof}[Proof of Theorem~\ref{theo:main}]Existence of approximable solutions is provided in Proposition~\ref{prop:convI}. Proposition~\ref{prop:ex-ren} yields that an approximable solution is a renormalized solutions. Proposition~\ref{prop:sola-trunc-meas} actually localizes the support of singular measures. Approximable solutions can be achieved from renormalized ones by a choice of $h=T_k$. Uniqueness for problems with diffuse data is given for approximable solutions in Proposition~\ref{prop:uniq-sola}.  \end{proof}



\begin{thebibliography}{10}


\bibitem{mingione02}
E.~Acerbi and G.~Mingione.
\newblock Regularity results for stationary electro-rheological fluids.
\newblock {\em Arch. Ration. Mech. Anal.}, {\bf 164}, no. 3, 213--259, 2002.


\bibitem{AH}
D. R. Adams  and  L. I. Hedberg. Function Spaces and Potential Theory, Springer,
1996.

\bibitem{ab} L. Aharouch, J. Bennouna. \newblock Existence and uniqueness of solutions of unilateral problems in Orlicz spaces. Nonlinear Anal. {\bf 72}, no. 9-10, 3553--3565, 2010.

\bibitem{yags} Y. Ahmida, I. Chlebicka, P. Gwiazda, A. Youssfi, Gossez's approximation theorems in Musielak-Orlicz-Sobolev spaces, \emph{J. Functional Analysis} {\bf 275}, no. 9, 2538--2571, 2018.


\bibitem{ACCZG}
A.~Alberico, I.~Chlebicka, A.~Cianchi, and A.~Zatorska-Goldstein.
\newblock Fully anisotropic elliptic problems with minimally integrable data.
\newblock {\em Calc. Var. PDE}, 58:186, 2019.

\bibitem{ABEY} A. Aissaoui Fqayeh, A. Benkirane, Abdelmoujib, M. El Moumni, A. Youssfi, Existence of renormalized solutions for some strongly nonlinear elliptic equations in Orlicz spaces, {\em Georgian Math. J.} {\bf 22}, no. 3, 305--321, 2015.
 
\bibitem{ball}
J.~M. Ball.
\newblock Convexity conditions and existence theorems in nonlinear elasticity.
\newblock {\em Arch. Rational Mech. Anal.}, {\bf 63}, no. 4, 337--403, 1976/77.

\bibitem{Baroni}
P. Baroni,
Riesz potential estimates for a general class of quasilinear equations, 
\emph{ Calc. Var. Partial Differential Equations} {\bf 53},  803--846, 2015.


\bibitem{bacomi-st} P. Baroni, M. Colombo, G. Mingione,  Nonautonomous functionals, borderline cases and related function classes. \emph{Algebra i Analiz} 27(3):6--50, (2015); translation in \emph{St. Petersburg Math. J.} {\bf 27}, no. 3, 347--379, 2016.




 
\bibitem {bahaha} D. Baruah, P. Harjulehto, P. H\"asto, Capacities in generalized Orlicz spaces, \emph{J. Funct. Spaces}, Article ID 8459874, 2018.  
 


\bibitem{BW} M. Bendahmane, P. Wittbold, Renormalized solutions for nonlinear elliptic equations with variable exponents and L1 data, {\em Nonlinear Anal.} {\bf 70}, 567--583, 2009. 

\bibitem{BBGGPV}
P. B\'enilan, L. Boccardo, T. Gallou\"et, R. Gariepy, M. Pierre  and  J.-L. V\'azquez, An $L^1$-theory of~existence and uniqueness of~solutions of nonlinear elliptic equations, \emph {Ann. Scuola Norm. Sup. Pisa Cl. Sci. (4)} {\bf 22}, 241--273, 1995.

\bibitem{BeBe}
A. Benkirane  and  J. Bennouna,
Existence of renormalized solutions for some elliptic problems   involving derivatives of nonlinear terms in Orlicz spaces, \emph  {Partial differential equations}, 139--147,   Lecture Notes in Pure and Appl. Math., \textbf{229}, \emph{Dekker, New York}, 2002.


\bibitem{BG} L. Boccardo and T. Gallou\"et, Nonlinear elliptic and parabolic equations involving measure data, \emph{J. Funct. Anal.} {\bf 87}, no. 1, 149--169, 1989.


\bibitem{BG-cpde} L. Boccardo and T. Gallou\"et, Nonlinear elliptic equations with right-hand side measures, \emph{Comm. Partial Differential Equations}, {\bf} 17, no. 3-4, 641--655, 1992.


\bibitem{BGO} L. Boccardo, T. Gallou\"et  and  L. Orsina, Existence and uniqueness of entropy solutions for nonlinear elliptic equations with measure data, {\em Ann. Inst. Henri Poincar\'e, Anal. Non Lin\'eaire} {\bf 13}, 539--551, 1996.

\bibitem{BM}  L. Boccardo and F. Murat,  A property of nonlinear elliptic equations when the right-hand side is a measure. \emph{Potential Anal.} {\bf 3}, 257--263, 1994.


\bibitem{chen06}
Y.~Chen, S.~Levine, and M.~Rao.
\newblock Variable exponent, linear growth functionals in image restoration.
\newblock {\em SIAM J. Appl. Math.}, {\bf 66}, no. 4, 1383--1406, 2006.

\bibitem{IC-pocket} I.~Chlebicka, A pocket guide to nonlinear differential equations in {M}usielak--{O}rlicz spaces, \emph{Nonl. Analysis}, {\bf 175}, 1--27, 2018.


\bibitem{gradest} I. Chlebicka, Gradient estimates for problems with Orlicz growth, {\em Nonl. Analysis}, 194:111364, 2020.


\bibitem{gradest2}
I.~Chlebicka.
\newblock Regularizing effect of the lower-order term for elliptic problems  with {O}rlicz growth.
\newblock {\em to~appear in Israel J.~Math.}, {\bf 236}, 967--1000,   2020.

\bibitem{ChDF}
I.~Chlebicka and C.~De~Filippis.
\newblock Removable sets for nonuniformly elliptic problems. \newblock {\em Ann. Mat. Pura Appl. (4)} {\bf 199}, no. 2, 619--649, 2020.

\bibitem{CGZG}
I.~Chlebicka, F.~Giannetti, and A.~Zatorska-Goldstein.
\newblock Elliptic problems with growth in nonreflexive {O}rlicz spaces and with measure or ${L}^1$ data.
\newblock {\em J. Math. Anal. Appl.}  {\bf 479}, no. 1, 185--213, 2019.


\bibitem{CGZG-Wolff} I.~Chlebicka, F.~Giannetti, and A.~Zatorska-Goldstein. \newblock  {Wolff potentials and local behaviour of solutions to measure data elliptic problems with  {O}rlicz growth} \newblock {\em preprint}, 2020.

\bibitem{CGWKSG} I. Chlebicka, P. Gwiazda, A. Wr\'oblewska-Kami\'nska  and  A. \'Swierczewska-Gwiazda, Partial differential equations in anisotropic Musielak-Orlicz spaces, manuscript, 2019.



\bibitem{icpgazg}
I.~Chlebicka, P.~Gwiazda, and A.~Zatorska-Goldstein.
\newblock Well-posedness of parabolic equations in the non-reflexive and anisotropic {M}usielak-{O}rlicz spaces in the class of renormalized solutions.
\newblock {\em J.~Differential Equations}, {\bf 265}, no.~11, 5716--5766, 2018.

\bibitem{t2}  I. Chlebicka, P. Gwiazda, A. Zatorska-Goldstein, Renormalized solutions to parabolic equation in~time and space dependent anisotropic Musielak-Orlicz spaces in absence of Lavrentiev's phenomenon, {\em J.~Differ. Equations} {\bf 267}, no. 2, 1129--1166, 2019. 


\bibitem{ChK}
I.~Chlebicka, A. Karppinen.
\newblock Removable  sets in elliptic equations with Musielak-Orlicz growth.
\newblock \newblock {\em J. Math. Anal. Appl.} to appear.


\bibitem{CZG-gOp}
I.~Chlebicka, A.~Zatorska-Goldstein.
\newblock Generalized superharmonic functions with strongly nonlinear operator.
\newblock {\em arXiv:2005.00118}, 2020.

\bibitem{CiMa} A. Cianchi, V. Maz'ya, Quasilinear elliptic problems with general growth and merely integrable, or~measure, data, {\em Nonlinear Anal.} {\bf 164}, 189--215, 2017.
 
 \bibitem{comi} M. Colombo, G. Mingione, Regularity for Double Phase Variational Problems, \emph{Arch. Rational Mech. Anal.} {\bf 215}, 443--496, 2015.


\bibitem{CUH} D. Cruz-Uribe, P. H\"ast\"o, Extrapolation and interpolation in generalized Orlicz spaces. {\em Trans. Amer. Math. Soc.} {\bf 370}, no. 6, 4323--4349, 2018.

\bibitem{CUF} D. Cruz-Uribe, A. Fiorenza, Lebesgue spaces. Foundations and harmonic analysis. Applied and Numerical Harmonic Analysis. {\em Birkh\"a{}user/Springer}, Heidelberg, 2013.

\bibitem{DaMMa}  G. Dal Maso, A. Malusa, Some properties of reachable solutions of nonlinear elliptic equations with measure data.   25 (1997), no. 1-2, 375--396 (1998). , {\em Ann. Scuola Norm. Sup. Pisa Cl. Sci. (4),} {\bf 25}, no. 1--2, 375--396, 1998.



\bibitem{DMMOP1} G. Dal Maso, F. Murat, L. Orsina  and  A. Prignet, Renormalized solutions of elliptic equations with general measure data, {\em Ann. Scuola Norm. Sup. Pisa Cl. Sci. (4)} {\bf 28}, no. 4, 741--808, 1999.


\bibitem{demi} C. De Filippis, G. Mingione, Manifold constrained non-uniformly elliptic problems, \emph{J.~Geom. Anal.},  {\bf 30}, no. 2, 1661--1723, 2020.

\bibitem{deoh} C. De Filippis, J. Oh, Regularity for multi-phase variational problems, \emph{J.~Differential Equations} {\bf 267}, no. 3, 1631--1670, 2019.

\bibitem{dkg} A. Denkowska, P. Gwiazda, and P. Kalita, \emph{On renormalized solutions to elliptic inclusions with nonstandard growth}, arXiv:1912.12729, 2019.


\bibitem{ks} L. Diening, P. Harjulehto, P. H\"ast\"o  and  M. R{\ocirc{u}}{\v{z}}i{\v{c}}ka, Lebesgue and Sobolev spaces with variable exponents, {\em Lecture Notes in Mathematics}, Springer, Heidelberg, 2011.
 

\bibitem{DF} G. Dong  and  X. Fang, Existence results for~some nonlinear elliptic equations with measure data in~Orlicz-Sobolev spaces, {\em Bound. Value Probl.}, {\bf 22}, 18pp. 2015.
 
 


\bibitem{FST} M. Fukushima, K. Sato  and  S. Taniguchi, On the closable parts of pre-Dirichlet forms and the fine supports of underlying measures, {\em Osaka J. Math.} {\bf 28} no. 3, 517--535, 1991. 

\bibitem{Gossez-pde} J.-P. Gossez, Nonlinear elliptic boundary value problems for equations with rapidly (or slowly) increasing coefficients, {\em Trans. Amer. Math. Soc.} {\bf 190}, 163--205, 1974.

\bibitem{Gossez}
J.-P. Gossez,
Some approximation properties in Orlicz-Sobolev spaces, 
\emph {Studia Math.} {\bf 74}, 17--24, 1982.

\bibitem{gwiazda-non-newt}
P.~Gwiazda and A.~{\'S}wierczewska-Gwiazda.
\newblock On non-{N}ewtonian fluids with a property of rapid thickening under different stimulus. 
\newblock {\em Math. Models Methods Appl. Sci.}, {\bf 18}, no. 7, 1073--1092, 2008.


\bibitem{gw-e} P. Gwiazda, P. Wittbold, A. Wr\'o{}blewska  and  A. Zimmermann, Renormalized solutions of nonlinear elliptic problems in generalized Orlicz spaces, {\em J.~Differential Equations}, {\bf 253}, no. 2, 635--666, 2012.

\bibitem{pgisazg1}
P. Gwiazda, I. Skrzypczak  and  A. Zatorska-Goldstein, Existence of renormalized solutions to elliptic equation in {M}usielak-{O}rlicz space, {\em J.~Differential Equations} {\bf 264}, 341--377, 2018.


\bibitem{hahabook}
P.~Harjulehto and H{\"a}st{\"o}.
\newblock {\em Orlicz spaces and Generalized Orlicz spaces}, \newblock vol. 2236 of {\em Lecture Notes in Mathematics}. Springer, Cham, 2019.



\bibitem{hahale} P. Harjulehto, P. H\"ast\"o, M. Lee, H\"older continuity of quasiminimizers and $\omega$-minimizers of functionals with generalized Orlicz growth, \emph{Ann. Scuola Norm. Sup. Pisa Cl. Sci.} \texttt{http://cc.oulu.fi/~phasto/pp/omega-minimizers181129.pdf}

\bibitem{hahato} P. Harjulehto, P. H\"ast\"o, O. Toivanen, H\"older regularity of quasiminimizers under generalized growth conditions, \emph{Calc. Var. Partial Differential Equations} 56, no. 2, article:22, (2017).


\bibitem{haju}
P. Harjulehto and  J. Juusti.
\newblock {\em The Kellogg property under generalized growth conditions}, \newblock preprint 2020.

\bibitem{hekima} J. Heinonen, T. Kilpel\"ainen, O. Martio,  Nonlinear potential theory of degenerate elliptic equations, {\em Oxford Mathematical Monographs}. The Clarendon Press, Oxford University Press, New York, 1993. 

\bibitem{kikutu} T. Kilpel\"ainen, T. Kuusi, A. Tuhola--Kujanp\"a\"a, Superharmonic functions are locally renormalized solutions
{\em Ann. Inst. H. Poincar\'e Anal. Non Lin\'eaire}, {\bf 28}, no. 6, 775-795, 2011.

\bibitem{kima} T. Kilpel\"ainen, J. Mal\'y, Degenerate elliptic equations with measure data and nonlinear potentials. \emph{Ann. Scuola Norm. Sup. Pisa Cl. Sci. (4)}, {\bf 19}, 591--613, 1992.
 
 

\bibitem{Lavr} M. Lavrentiev, Sur quelques probl\'e{}mes du calcul des variations, Ann. Mat. Pura Appl. {\bf 41}, 107--124, 1927.
   
\bibitem{LvLiZou} B. Lv, F. Li, W.  Zou, Existence and uniqueness of renormalized solutions to some nonlinear elliptic equations with variable exponents and measure data, {\em J. Convex Anal.} {\bf 21}, no. 2, 317--338, 2014.   
   
   
\bibitem{MSZ}
J. Mal\'y, D. Swanson  and  W. P. Ziemer, Fine behavior of functions whose gradients are in an Orlicz space, {\em Studia Math.} {\bf 190}, no. 1, 33--71, 2009.     

\bibitem{Maeda} F.-Y. Maeda, Renormalized solutions of Dirichlet problems for quasilinear elliptic equations with general measure data, {\em Hiroshima Math. J.} {\bf 38}, no. 1, 51--93, 2008. 

\bibitem{m1} P. Marcellini, Regularity and existence of solutions of elliptic equations with $p, q$-growth conditions, \emph{J.~Differential Equations} {\bf 90}, no. 1, 1--30, 1991.



\bibitem{Marc2020}
P. Marcellini, \newblock Anisotropic and $p,q$-nonlinear partial differential equations.  \newblock  {\em Rend. Fis. Acc. Lincei} (2020). \texttt{https://doi.org/10.1007/s12210-020-00885-y}

\bibitem{Musielak} J. Musielak, Orlicz Spaces and Modular Spaces, in: Lecture Notes in Mathematics, vol. 1034, Springer-Verlag, Berlin, 1983.


\bibitem{Nakano} H. Nakano, Modulared Semi-Ordered Linear Spaces, Maruzen Co., Ltd., Tokyo, 1950.
 


\bibitem{Re} 
Yu. G. Reshetnyak, On the concept of capacity in the theory of functions with generalized derivatives, {\em Sibirsk. Mat. Zh.} {\bf 10}, 1109--1138, 1969 (in Russian);
English transl.: {\em Siberian Math. J.} {\bf 10}, 818--842, 1969.

\bibitem{el-rh2}
M.~R{\ocirc{u}}{\v{z}}i{\v{c}}ka.
\newblock {\em Electrorheological fluids: modeling and mathematical theory},
  volume 1748 of {\em Lecture Notes in Mathematics}.
\newblock Springer-Verlag, Berlin, 2000.

\bibitem{San-Ur}
M. Sanch\'on, J. M. Urbano, Entropy solutions for the $p(x)$-Laplace equation. {\em Trans. Amer. Math. Soc.} {\bf 361},  no. 12, 6387--6405, 2009. 

\bibitem{Talenti}
G. Talenti, Elliptic equations and rearrangements, {\em Ann. Sc. Norm. Super Pisa Cl. Sci. (4),} {\bf 4}, 697--718, 1976.

\bibitem{yyy} S. Yang, D. Yang, W. Yuan, New characterizations of Musielak--Orlicz--Sobolev spaces via sharp ball averaging functions, {\em Front. Math. China} {\bf 14}, no. 1, 177--201, 2019. 


\bibitem{Zhang} C. Zhang, Entropy solutions for nonlinear elliptic equations with variable exponents, {\em Electron. J.~Differential Equations}, {\bf 92}, 14 pp. 2014.


\bibitem{Zhang-Zhou} C. Zhang, S. Zhou, Entropy and renormalized solutions for the p(x)-Laplacian equation with measure data, {\em Bull. Aust. Math. Soc.} {\bf 82}, no. 3, 459--479, 2010.


\bibitem{zhikov9798}
V.~V. Zhikov.
\newblock On some variational problems.
\newblock {\em Russian J. Math. Phys.}, {\bf 5}, no. 1, 105--116 (1998), 1997.
\end{thebibliography}
\end{document}